\documentclass[12pt]{amsart}
\usepackage{graphicx,caption}
\usepackage{amssymb,mathtools,graphicx}
\usepackage{amssymb,amsmath,amsthm}
\usepackage{latexsym}
\usepackage{epsfig}
\usepackage[abs]{overpic}
\usepackage{color}
\usepackage{comment}
\usepackage[letterpaper,top=2cm,bottom=2cm,left=3cm,right=3cm,marginparwidth=1.75cm]{geometry}

\usepackage{amsmath}
\usepackage{graphicx}
\usepackage[colorlinks=true, allcolors=blue]{hyperref}
 \usepackage{float}
 
\newtheorem{theorem}{Theorem}[section]

\newtheorem{definition}[theorem]{Definition}
\newtheorem{example}[theorem]{Example}
\newtheorem{proposition}[theorem]{Proposition}
\newtheorem{remark}[theorem]{Remark}

\newtheorem{conjecture}[theorem]{Conjecture}
\newtheorem{problem}[theorem]{Problem}

\newtheorem*{theorem*}{Main Theorem}

\title{Path to homology of Yang-Baxter operators}
\author{Jozef H. Przytycki}
\begin{document}

\keywords{knot, distributive homology,  quandle, skein module, Yang-Baxter homology, Yang-Baxter operator, history of knot theory.\\
Mathclass: Primary 57K10; Secondary 16T25, 57K31, 57-03.\\
Published in LOOPS23: Nonassociative Algebra {\it Banach Center Publications},
Volume 129, Warszawa 2025, 133-182.
}
%\mathclass{Primary 57K10; Secondary 16T25, 57K31, 57-03.}

\title{Path to homology of Yang-Baxter operators}

%\author{J{o}zef H. Przytycki}
\address{Department of Mathematics, The George Washington University, Washington DC 20052, USA,  \\
and University of Gdansk, Poland,\\
ORCID: 0000-0002-1600-8889\quad\ E-mail: przytyck@gwu.edu}

\maketitle

\begin{abstract}

This paper is an extended version of two talks I gave during workshop ``Loops'13" in Bedlewo in June 2023. In the first talk I gave a historical introduction to Knot Theory. In the second, I traced my journey toward Yang-Baxter homology and this talk has a partially survey and a partially novel character.
\end{abstract}

\tableofcontents

\section{Introduction}
This paper is an extended version of two talks I gave at workshop ``Loops'13" in B{\c e}dlewo in June 2023. It has partially survey and partially novel character. 
In the first talk I gave a historical introduction to Knot Theory starting from G.W.Leibniz (1646-1716) ``Geometria Situs" and mentioning two mathematicians/philosophers who infuenced Leibniz greatly: Ramon Llull\footnote{Invited by my friend Marithania Silvero, I visited the University of Barcelona, in July, 2016. We were working on Khovanov homology \cite{PrSi1,PrSi2} (see Section 7.1 on my dream of connecting Khovanov homology and Yang-Baxter homology). I could realize then that Ramon Llull is kept there in great esteem, including university named after him.} (1232-1313) and Athanasius Kircher\footnote{Yes, it is the same Kircher who, as also Olga Tokarczuk mentions, influenced  Benedykt Chmielowski (1700-1763) in his Encyclopedia, \cite{Tok}.} (1601-1680). In the first part of the second talk, Section \ref{Attraction}, I desribe  ``Attraction of nonassociative structures" and how my work with Pawe{\l} Traczyk on generalizations of the Jones polynomial \cite{Jon2,PT1} led us to entropic structures; also that our work on HOMFLYPT polynomial led me to Skein Modules \cite{Prz2}. The substantial fragment of this part (see Subsections~\ref{3.5}--\ref{Mietek}) is devoted to $n$-moves on links and interesting formulas (in the commutative and distributive cases). Generally, the second part describes the winding path that led me to Yang-Baxter homology. The path includes many digressions and ends with a few open problems on which I am very eager to work.
In particular, I describe a mysterious decomposition of the third Reidemeister move into face maps, as shown in Figure~\ref{Y-B-2-cycle}. This decomposition still awaits further exploration and explanation. I came up with this idea while traveling by train from Gda\'{n}sk to Pozna\'{n} in May 2015 (see Section \ref{Section 6}).

\section{Short history of knot theory}

Knots have fascinated people from the dawn of the human history.
We can wonder what caused a merchant living about 1700 BC. in Anatolia
and exchanging goods with Mesopotamians, to choose braids and knots as
his seal sign; Figure \ref{AnatolianStampRa}. We can guess however that
stamps, cylinders and seals with knots
and links as their motifs appeared before proper writing was invented
about 3500 BC.
\ \\
\ \\
\begin{figure}
\centering
\scalebox{1.19}{\includegraphics{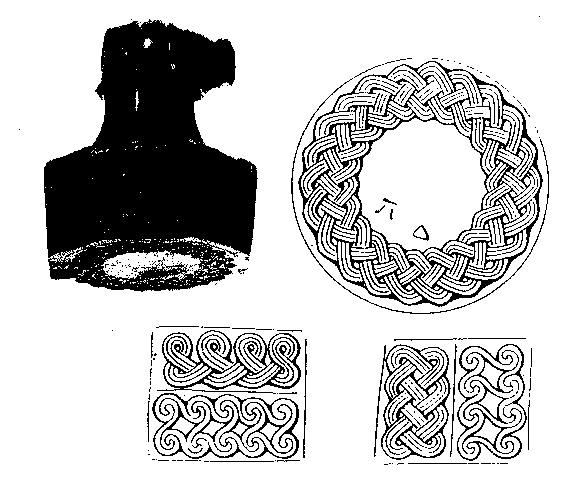}}
\caption{Stamp seal, about 1700 BC (the British Museum)\\
On the octagonal base [of hammer-handled haematite seal] are patterns surrounding a hieroglyphic inscription (largely erased). Four of the sides are blank and the other four are engraved with elaborate
patterns typical of the period (and also popular in Syria) alternating
with cult scenes...(\cite{Col}, p.93). }
\label{AnatolianStampRa}
\end{figure}

An even older example, shown in Figure~\ref{SumerianKnotRa}, is a cylinder seal impression (2600-2500 B.C.) from the Sumerian city-state of Ur in ancient Mesopotamia. It can be found in the book ``Innana" by Diane Wolkstein and Samuel Noah Kramer (page 7 of \cite{Wo-Kr}) to illustrate the following text:

``Then a serpent who could not be charmed
made its nest in the roots of the tree."
\ \\
%\vspace{3in}
\begin{figure}
\centering
\scalebox{1.8}{\includegraphics{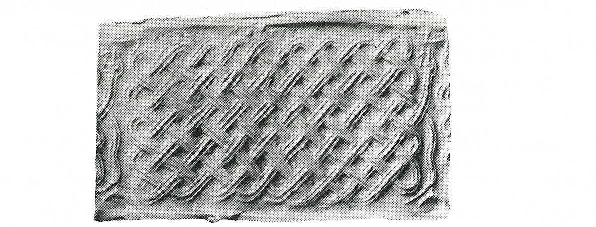}}
\caption{Snake with Interlacing Coil. Cylinder seal. Ur, Mesopotamia. The Royal Cemetery, Early Dynastic
period, c. 2600-2500 B.C. Lapis lazuli. Iraq Museum. Photograph
courtesy of the British Museum, UI 9080,
\cite{Wo-Kr} }
\label{SumerianKnotRa}
\end{figure}
See \cite{Wo-Kr}\footnote{On the pages 179-180 they comment:\ 
 The majority of the pictorial surface is covered with the inter-
twined coils of a serpent, forming a lattice pattern. To the right its tail
appears below the coils and its head above, with a bird perched upon it.
Two snakes intertwined rather than one are shown on earlier
representations of this motif. Snakes twist themselves together in this
fashion when mating, suggesting this symbol's association with fertility.}

 It is tempting to look for the origin of knot theory in Ancient
Greek mathematics (if not earlier). There is some justification to do so:
a Greek physician named Heraklas, who lived during the first century A.D.
and who was likely a pupil or associate of Heliodorus, wrote an essay
on surgeon's slings\footnote{Heliodorus, who lived at the time
of Trajan (Roman Emperor 98--117 A.D.), also mentions in
his work knots and loops \cite{Sar1}}\footnote{Hippocrates of
Cos (c.460 - 375 B.C.) in his collection of notes:
In the surgery; {\it De officina medici; Cat' i\={e}treion}, deals with
bandaging. Thessalos, Hippocrates' son, has been named also as the author.
 A commentary on the Hippocratic treatise on {\it Joints} was written by
 Apollonios of Citon (in Cypros), who flourished in Alexandria in the
first half of the first century B.C. That commentary has obtained a
great importance because of an accident in its transmition. A manuscript
 of it in Florence (Codex Laurentianus) is a Byzantine copy of the ninth
century, including surgical illustrations (for example, with reference to
 reposition methods), which might go back to the time of Apollonios and
even Hippocrates. Iconographic tradition of this kind are very rare,
because the copying of figures was far more difficult than the writing
of the text and was often abandoned \cite{Sar1} (page 365).
%G.Sarton, Ancient Science through the Golden Age of Greece, Dover Pub., 1993
%(first edition: Harvard Univ. Press, 1952).
The story of the illustrations to Apollonios' commentary is described in
\cite{Sar2}.}.
%G.Sarton, Introduction to the history of science, Baltimore: Williams
%and Wilkins, 1927-1948; (vol. 1 p.216).
Heraklas explains,  giving step-by-step instructions,
eighteen ways to tie orthopedic slings. His work survived because
Oribasius of Pergamum (ca. 325-400; physician of the emperor
 Julian the Apostate) included it toward the end of the fourth century
in his ``Medical Collections". The oldest extant manuscript of ``Medical
Collections" was made in the tenth century by the Byzantine physician
Nicetas. The Codex of Nicetas was brought to Italy in the fifteenth century by
an eminent Greek scholar,   J.~Lascaris,  a refugee from Constantinople.
Heraklas' part of the Codex of Nicetas has no illustrations,
and around 1500 an anonymous artist depicted Heraklas' knots in one of
the Greek manuscripts of Oribasus ``Medical Collections" (in Figure \ref{crossed} we
reproduce the drawing of the third Heraklas knot together with its
original,   Heraklas',  description). Vidus Vidius (1500-1569),
a Florentine who became physician to Francis I (king of France, 1515-1547)
 and professor of medicine in the Coll\`ege
de France,   translated the Codex of Nicetas into Latin; it contains also
drawings of Heraklas' surgeon's slings by the Italian painter,
sculptor and architect Francesco Primaticcio (1504-1570); \cite{Da,Ra}.
\begin{figure}
\centering
\scalebox{2.35}{\includegraphics{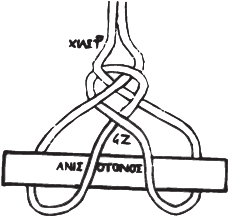}}
\caption{The crossed noose;\\
``For the tying the crossed noose,  a cord,
folded double,  is procured,
and the ends of the cord are held in the left hand,   and the loop is
held in the right hand. Then the loop is twisted so that the slack
parts of the cord crossed. Hence the noose is called crossed. After
the slack parts of the cord have been crossed,  the loop is placed
on the crossing,   and the lower slack part of the cord is pulled up
through the middle of the loop. Thus the knot of the noose is in the
middle,   with a loop on one side and two ends on the other. This
likewise,   in function,  is a noose of unequal tension";
\cite{Da}}
\label{crossed}
\end{figure}
\vspace*{1in}

\subsection{From Leibniz ``Geometria Situs" to Gauss}
Heraklas' essay should be taken seriously as far as knot theory is
concerned even if it is not knot theory proper but rather its application.
The story of the survival of Heraklas' work; and efforts to reconstruct
his knots in Renaissance is typical of all science disciplines and
efforts to recover lost Greek books provided the important engine for
development of modern science. This was true in Mathematics as well:\
the beginning of modern calculus in XVII century can be traced
to efforts of reconstructing lost books of Archimedes and other ancient
Greek mathematicians. It was only the work of Newton and Leibniz which
went much farther than their Greek predecessors.

There are two enlightening examples of great Renaissance artists
interest in knots:\
Engravings by Leonardo da Vinci\footnote{Giorgio Vasari writes
in \cite{Vas}: ``[Leonardo da Vinci] spent much time in making a regular design of a series of knots so that the cord may be traced from one end to the other, the whole filling a round space. There is a fine engraving of this most difficult design, and in the middle are the words: Leonardus Vinci Academia."} (1452-1519) \cite{Mac} (see Figure~\ref{Leonardo2Ra}) 
\begin{figure}[ht]
\centering
\scalebox{2.0}{\includegraphics{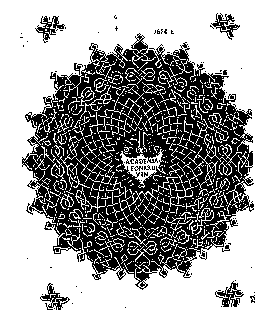}}
\caption{A knot by Leonardo \cite{Mac}; c. 1496 }
\label{Leonardo2Ra}
\end{figure}
and woodcuts by Albrecht
D\"urer\footnote{``Another great artist with whose works D\"urer now became acquainted was Leonardo da Vinci. It does not seem likely that the two artists ever met, but he may have been brought into relation with him through Luca Pacioli, the author of the book De Divina
Proportione, which appeared at Venice in 1509, and an intimate friend
of the great Leonardo. D\"urer would naturally be deeply interested in
the proportion theories of Leonardo and Pacioli. He was certainly
acquainted with some engravings of Leonardo's school, representing a
curious circle of concentric scrollwork on a black ground, one of them
entitled Accademia Leonardi Vinci; for he himself executed six woodcuts
in imitation, the Six Knots, as he calls them himself. D\"urer was
amused by and interested in all scientific or mathematical problems..."
From: http://www.cwru.edu/edocs/7/258.pdf, compare
\cite{Dur-2}.}\footnote{We can quote from the Tait fundamental paper {\it On Knots I}: ``I am indebted to Mr. Dallas for a photograph of a remarkable engraving by D\"urer, exhibiting a very complex but symmetrical linkage... "  } (1471-1528) \cite{Dur-1,Ha}, Figure \ref{Durer6knotsRa}.
\ \\
\begin{figure}
\centering
\scalebox{0.95}{\includegraphics{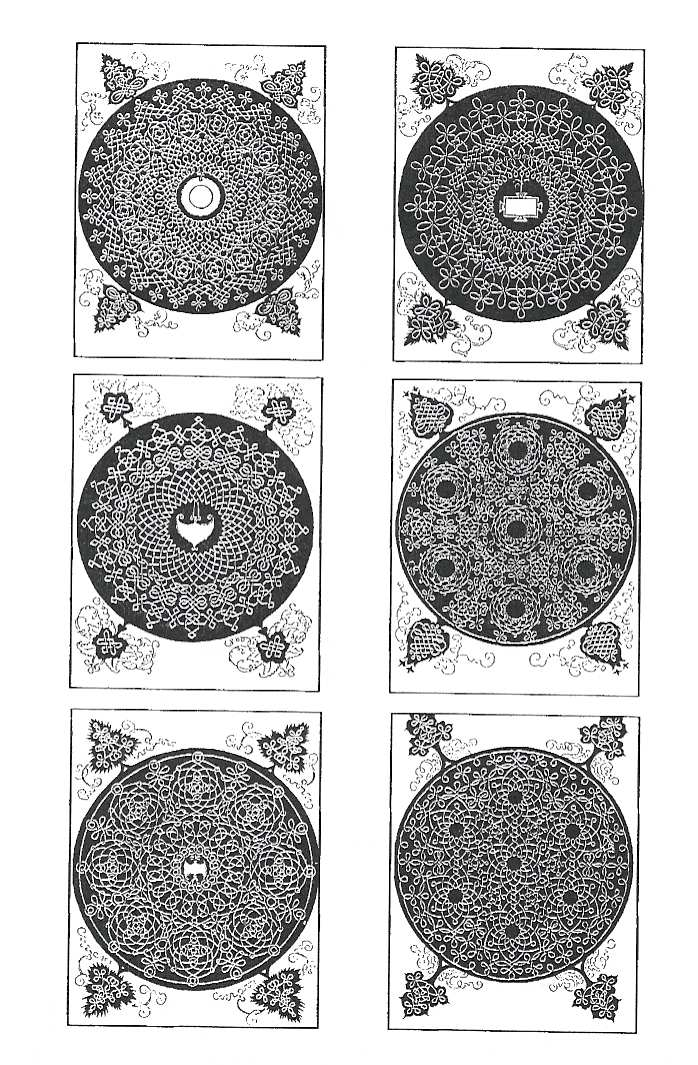}}
\caption{Six knots by D\"urer \cite{Kur}; c. 1505-1507}
\label{Durer6knotsRa}
\end{figure}
%\vspace{3.4in}
%\centerline{\psfig{figure=DurerKnot1.eps,height=11.5cm}}
%\centerline{\psfig{figure=Durer6knotsRa.eps,height=18.5cm}}
%\centerline{\psfig{figure=Durer6knots.eps,height=18.5cm}}
%\centerline{Fig. 2.3; \ Six knots by D\"urer \cite{Kur}; c. 1505-1507}
%\centerline{Fig. 2.3; \ A knot by D\"urer \cite{Kur}; c. 1505-1507}
\ \\

We would argue, that modern knot theory has its roots with
 Gottfried Wilhelm Leibniz (1646-1716) speculation that aside from calculus
and analytical geometry there should exist a ``geometry of position"
(geometria situs) which deals with relations depending on position
alone (ignoring magnitudes). In a letter to Christian Huygens (1629-1695),
written in 1679 \cite{Lei}, he declared:\\
``I am not content with algebra,  in that it yields neither the shortest
proofs nor the most beautiful constructions of geometry.
Consequently, in view of this, I consider that we need yet
another kind of analysis, geometric or linear, which deals directly
with position, as algebra deals with magnitude".

   I do not know whether Leibniz had any convincing example of a problem
belonging to the geometry of position. According to \cite{Kli}:\\
``As far back as 1679 Leibniz, in his Characteristica Geometrica,
tried to formulate basic geometric properties of geometrical figures,
to use special symbols to represent them,  and to combine these
properties under operations so as to produce others.
He called this study analysis situs or geometria
situs...  To the extent that he was at all clear, Leibniz envisioned what
we now call combinatorial topology".\footnote{One should also investigate whether
seeds of Leibniz Geometria Situs can be found in work of Ramon Llull (1232-1315) \cite{Bon}.
Also the influence of Athanasius Kircher (1602-1680) should be evaluated. In \cite{Fin,Gla} it is suggested
{\it that virtually every major scientific, linguistic, and historical project
on which he [Leibniz] embarked had been directly inspired by reading Kircher's works}. }.

The first convincing example
of geometria situs was Leonard Euler (1707-1783) solution to the bridges of K\"onigsberg
problem (1735).
 This concerns the bridges on the river Pregel
at K\"onigsberg (then in East Prussia).
\iffalse \footnote{Euler never
visited K\"onigsberg.
Euler was informed about the puzzle of bridges of K\"onigsberg
(and about a possible relation to Leibniz geometria situs) by
future mayor of Danzig (Gda\'nsk) Carl Leonhard
Gottlieb Ehler (1685-1753); there are 14 surviving letters from
 Ehler to Euler; the first is  dated April 8, 1735.
Ehler in turn was acting on behalf of Danzig mathematician
Heinrich Kuhn (1690-1769) \cite{H-W}. Kuhn was born in
K\"onigsberg, he studied at the Pedagogicum there,
... in 1733 he settled in Danzig. One should add that Kuhn
was the first person to suggest geometric interpretation of complex
numbers \cite{Jan,Kuhn}.}.
\fi
The problem was proposed by Heinrich K\"uhn (1690-1769) about 1735.
 Kuhn was a Danzig (Gda\'nsk) mathematician
 born in K\"onigsberg. He studied at the Pedagogicum there,
and in 1733 settled in Danzig as a mathematics professor at the Academic Gymnasium (he was also
a co-founder of the Nature Society and  the first person to suggest the geometric interpretation of complex
numbers \cite{Jan,Kuhn,Sz1,Sz2}). Kuhn communicated to Leonard Euler (1707-1783)
the puzzle of the bridges of K\"onigsberg, suggesting that this may
be an example of geometria situs. Kuhn was communicating,
in fact, through his friend Carl Leonhard Gottlieb Ehler (1685-1753), correspondent of Leibniz and
future mayor of Danzig.
The first extant\footnote{Possibly, the bridges of K\"onighsberg were mentioned in previous letters
of Ehler to Euler, or more likely, they discussed them when Ehler was in Petersburg.
Ehler met Euler in Petersburg in late 1734 or 1735 as a member of a delegation of Danzig to Empress of Russia,
asking for a reduction of reparations forced on Danzig by Russia in 1734 after the capitulation of Danzig
(the city was briefly occupied by the Russians after the prolonged Siege of Danzig during
the War of the Polish Succession (city capitulated June 30, 1734).
The city, which supported S. Leszczy\'nski, the losing candidate for
the Polish throne, was forced to pay reparations following the siege). {\color{red} Added for e-print: According to Ehler's son, Carl Ludwig Ehler, diary the delegation reached Petersburg October 5, 1734, compare \cite{Sz1}. } The delegation left Petersburg June 3, 1735;
\cite{Czer}. In view of this it is interesting to observe that Euler's paper has a note ``Based on a
talk presented to the Academy on 26 August 1735" just less than three months after Ehler left Petersburg. }
letter by Ehler concerning K\"onigsberg bridges is dated March 9, 1736. There
he writes: ``You would render to me and our friend K\"ohn a most valuable service, putting us greatly in your debt,
most learned Sir, if you would send us the solution, which you know well, to the
problem of the seven K\"onigsberg bridges, together with a proof.  It would prove to be an outstanding example
of {\bf Calculi Situs}, worthy of your great genius.  I have added a sketch of the said bridges \dots ''
In the reply of April 3, 1736 Euler writes ``\dots Thus you see,
most noble Sir, how this type of solution bears little relationship to
mathematics, and I do not understand why you expect a mathematician to produce it,
rather than anyone else, for the solution is based on reason alone, and its discovery does
not depend on any mathematical principle. Because of this,
I do not know why even questions which bear so little relationship to mathematics
are solved more quickly by mathematicians than by others. In the meantime,
most noble Sir, you have assigned this 
question to the {\bf geometry of position}, but I am ignorant as to what this new discipline
involves, and as to which types of problem Leibniz and Wolff expected to see expressed
in this way \dots '' \cite{H-W}.
However, when composing his famous paper on the bridges of K\"onigsberg, see Figure \ref{bridgesRa}, Euler
already agrees with K\"uhn's suggestion. The geometry of position figures even in the title of the
paper {\it Solutio problematis ad geometriam situs pertinentis}.\footnote{In the paper, Euler writes:
``The branch of geometry that deals with magnitudes has been zealously studied
throughout the past,  but there is another branch that has been almost
unknown up to now; Leibniz spoke of it first,  calling it the
``geometry of position'' (geometria situs).
This branch of geometry deals with relations dependent
on position; it does not take magnitudes into considerations, nor does it
involve calculation with quantities. But as yet no satisfactory definition
has been given of the problems that belong to this geometry of position
or of the method to be used in solving them. Hence, when a problem was recently mentioned,
which seemed geometrical but was so constructed that it did not require the measurement of
distances---especially as its solution involved only position, and no calculation
was of any use. I have therefore decided to give here the method which I have found for
solving this kind of problem, as an example of the geometry of position.\
2.\ The problem, which I am told is widely known, is as follows: in K\"onigsberg in Prussia,
there is\dots  ''\cite{Eu,BLW}.}

 Euler presented his solution (and generalization) to the bridges of K\"onigsberg problem
on August 26, 1735 to the Russian Academy at
St. Petersburg (it was submitted for publication in 1736), \cite{Eu}.
With the Euler paper, graph theory and topology were born.

%\vspace*{2.7in}
\begin{figure}
\centering
\scalebox{1.11}{\includegraphics{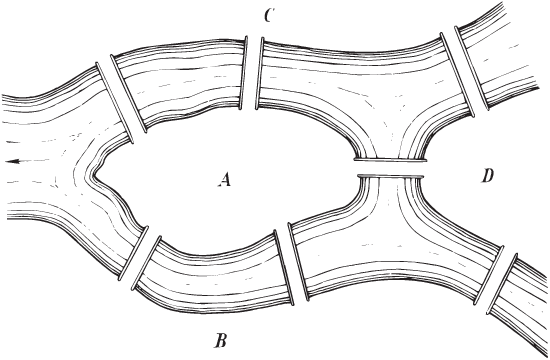}}
\caption{Seven bridges of K\"onigsberg}
\label{bridgesRa}
\end{figure}
%\begin{center}\centerline{\psfig{figure=bridgesRa.eps}}
%\centerline{\psfig{figure=bridges.eps}}Figure 3.1; Bridges of K\"onigsberg\end{center}

For the birth of knot theory one had to wait another 35 years. In 1771
Alexandre-Theophile Vandermonde (1735-1796) wrote the paper:
{\it Remarques sur les probl\`emes de situation} (Remarks on
problems of positions) where he specifically places braids and knots, compare Figure \ref{VanderRa}, as
a subject of the geometry of position \cite{Va}.\footnote{I had learnt about Vandermonde paper from the book {\it Graph theory 1736-1936},  \cite{BLW} and it was an incentive for me to write my first paper on history of knot theory {\it History of the knot theory from Vandermonde to Jones}, \cite{Prz3}, because no paper on (history of) knot theory was mentioning Vandermonde.}
In the first paragraph of the paper
Vandermonde wrote:

{\it Whatever the twists and turns of a system of threads
in space, one can always obtain an expression for the calculation of its
dimensions, but this expression will be of little use in practice.
The craftsman who fashions a braid, a net, or some knots will be concerned,
not with questions of measurement, but with those of position: what he sees
there is the manner in which the threads are interlaced.}

%\vspace*{2.5in}

\begin{figure}
\centering
\scalebox{1.11}{\includegraphics{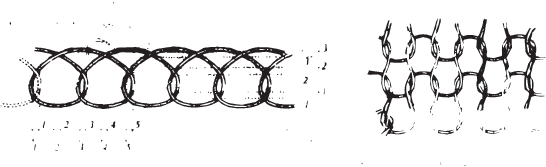}}
\caption{Knots of Vandermonde}
\label{VanderRa}
\end{figure}

%\centerline{\psfig{figure=VanderRa.eps}}
%\centerline{\psfig{figure=Vander.eps}}
%\begin{center} Figure 3.2; Knots of Vandermonde \end{center}

\subsection{Knot Theory from Gauss to modern knot theory}
   In our search for the origin of knot theory, we arrive next at
Carl Friedrich Gauss (1777-1855). According to  \cite{Stac,Dun}
%\footnote{The quotation is taken from the article by Paul St\"ackel:
%Gauss als Geometer, in the X'th Volume of Gauss' Collected Works. The
%quotation ends there: ``...den Gauss hat darauf vermerkt: `Riedl,
%{\em Beitr\"age zur Theorie des Sehnenwinkels}, Wien 1827'\ ". The note
%was translated to English by  Gauss' biographer \cite{Dun}}
:

``One of the oldest notes by Gauss to be found among his papers is a sheet
of paper with the date 1794. It bears the heading ``A collection of knots"
and contains thirteen neatly sketched views of knots with English names
written beside them... With it are two additional pieces of paper with
sketches of knots. One is dated 1819; the other is much later,  ...".
\footnote{According to \cite{Gr}, the first English sailing book with
pictures of knots appeared in 1769 \cite{Falc}.
%This book is illustrated with thirteen copper-plate engravings - we can
%speculate that these engravings were seen/copied by young Gauss.
}
In July of 1995 I finally visited the old library in G\"ottingen\footnote{I was invited to G\"ottingen by my Greek friend Sofia Lambropoulou, and the old library allowed me to look at all Gauss notebooks and even they made pictures for me of all drawings I asked them. These drawings are sitting now in some boxes in my GWU office.}, 
I looked at knots from 1794 - in fact not all of them are drawn - some
only described; see Figure \ref{Gauss-meshingknot} for one of the drawings\footnote{First
eight drawings are reproduced in the preface to \cite{T-G}.}.
\ \\
\begin{figure}
\centering
\scalebox{2.01}{\includegraphics{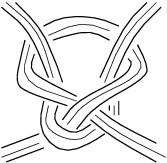}}
\caption{{\it Meshing knot}, 10'th knot of Gauss from 1794}
\label{Gauss-meshingknot}
\end{figure}
%\vspace*{4in}
%\centerline{\psfig{figure=Gauss-meshingknot.eps,height=5.1cm}}
%\begin{center}  Figure 3.3; {\it Meshing knot}, 10'th knot of Gauss from 1794.\end{center}

There are other fascinating drawings in Gauss' notebooks.
For example, the drawing of a braid with complex coordinate
description at each height (Figure \ref{Gauss-braid}; compare \cite{Ep, Prz4}),
 and the note that it is a good method of coding a knotting.
It is difficult to date the drawing; one can say for sure that it was done
between 1814 and 1830, I would guess closer to 1814\footnote{As a curiosity
one can add that one of the notebooks (Handb. 3) in which Gauss had
also drawn some knot diagrams has braids motives on its cover.}.
\ \\
%\vspace*{4in}
\begin{figure}
\centering
\scalebox{1.0}{\includegraphics{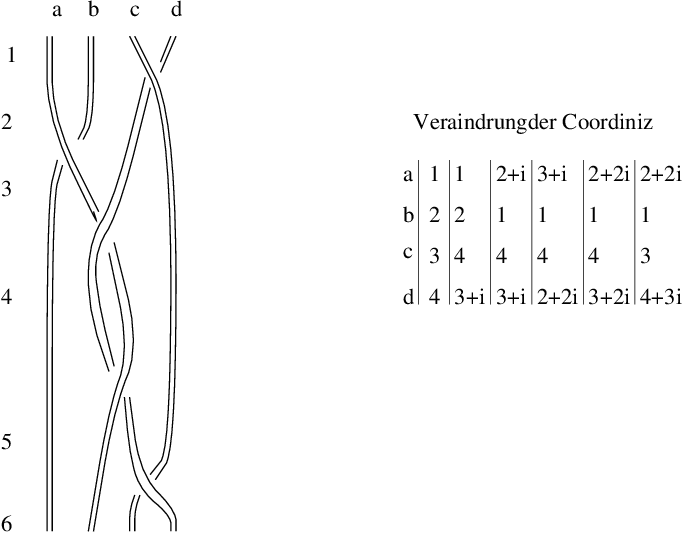}}
\caption{It is a good method of coding a knotting (from
a Gauss' notebook (Handb.7)). Gauss coordinates are not always
consistent; most of the time $i$ is pointing downward but there are exceptions}
\label{Gauss-braid}
\end{figure}

%\centerline{\psfig{figure=Gauss-braid.eps}}
%\begin{center} Figure \ref{Gauss-braid}\footnote{Gauss coordinates are not always
%consistent; most of the time $i$ is pointing downward but there are exceptions.} \end{center}

  There is also the mysterious ``framed tangle",
 see Figure \ref{gauss12Arx} \cite{Ga-1,Prz6} whose interpretation is not yet
convincingly given.
\begin{figure}
\centering
\scalebox{.45}{\includegraphics{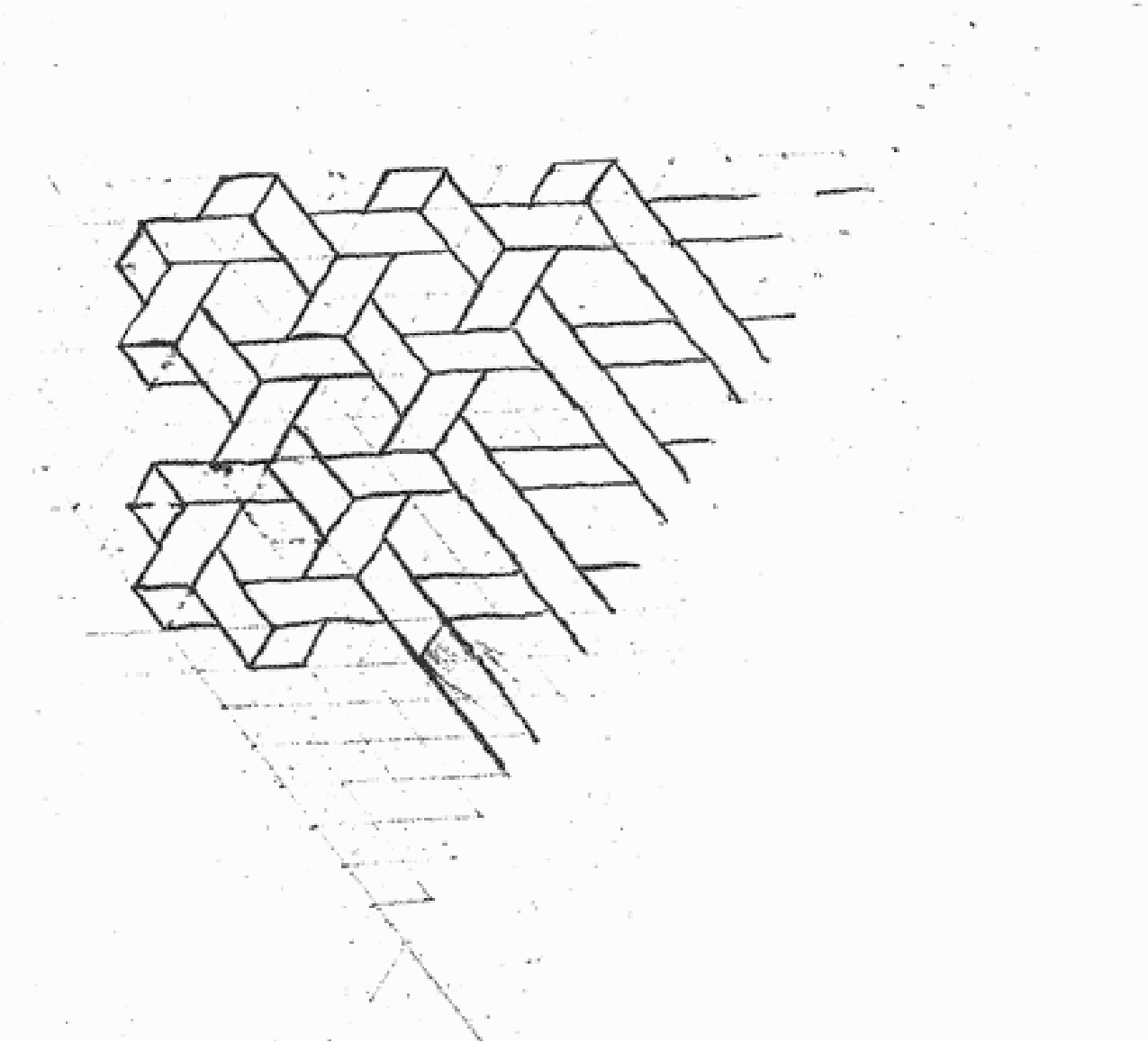}}
\caption{Framed tangle from Gauss' notebook \cite{Ga-1}}
\label{gauss12Arx}
\end{figure}
%\vspace*{4.4in}
%\centerline{\psfig{figure=gauss12Arx.eps,height=10.6cm}}
%\centerline{\psfig{figure=gauss12Ra.eps,height=10.6cm}}
%\centerline{\psfig{figure=gauss12.eps,height=10.6cm}}
%\centerline{Fig. 3.5; \ Framed tangle from Gauss' notebook \cite{Ga-1}}

In his note (Jan. 22 1833) Gauss introduces the linking number of two
knots\footnote{His method is analytical - the Gauss integral;
in modern language Gauss integral computes analytically the degree of
the map from a torus parameterizing a 2 component link to the unit 2-sphere.}.
Gauss' note presents the first deep incursion into knot theory;
it establishes that the following two links
are substantially different: \parbox{1.25cm}{\psfig{figure=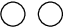}} and \parbox{0.8cm}{\psfig{figure=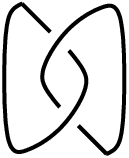,height=0.8cm}}. 
Gauss' analytical method has recently been revitalized by Witten's approach to knot theory \cite{Wit}.

James Clerk Maxwell (1831-1879), in his fundamental book of 1873
``A treatise on electricity \& magnetism" \cite{Max} writes
\footnote{It was only six years after
Gauss note was first published in his collected works in 1867.}
:
``It was the discovery by Gauss of
this very integral, expressing the work done on a magnetic pole while
describing a closed curve in presence of a closed electric current,
and indicating the geometrical connection between the two closed curves,
that led him to lament the small progress made in the Geometry
of Position since the time of Leibnitz, Euler and Vandermonde. We
have now, however, some progress to report chiefly due to Riemann,
Helmholtz and Listing."\footnote{Gauss wrote in 1833, in the same note in
which he introduced the linking number:
``On the geometry of position, which Leibniz initiated and to which only two
geometers,  Euler and Vandermonde,  have given a feeble glance,  we know and
possess, after a century and a half, very little more than nothing."}
Maxwell
goes on to describe two closed curves which cannot be separated
but for which the value of the  Gauss integral is equal to zero; Figure \ref{Maxwell}.

%\vspace*{1.5in}
\begin{figure}
\centering
\scalebox{1.45}{\includegraphics{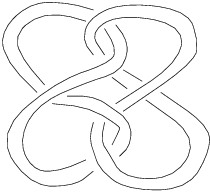}}
\caption{The link of Maxwell}
\label{Maxwell}
\end{figure}
%\centerline{\psfig{figure=Maxwell.eps}}\begin{center} Figure 3.6;\ The link of Maxwell. \end{center}

In 1876, O.~Boeddicker observed that, in a certain sense, the linking
number is the number of the crossing points of the second curve with
a surface bounded by the first curve \cite{Boe-1,Boe-2,Bog}.\
Hermann Karl Brunn\footnote{Born August 1, 1862 Rome (Italy),
died Sept. 20, 1939 (Munchen, Germany)\cite{Bl}.}
 \cite{Br} observed in 1892
that the linking number
of a two-component link, considered by Gauss, can be read from
a diagram of the link\footnote{It is also noted by Tait
in 1877 (\cite{Ta}, page 308).}.
If the link has components $K_1$ and $K_2$, we
consider any diagram of the link and count each point
at which $K_1$ crosses under $K_2$ as $+1$ for
\parbox{.9cm}{\psfig{figure=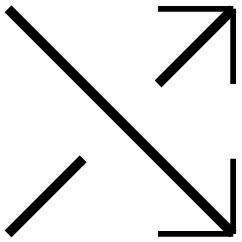,height=.5cm}}\
and $-1$ for\ \parbox{0.9cm}{\psfig{figure=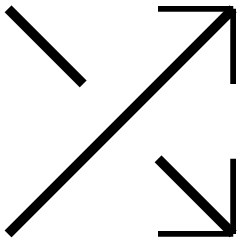,height=.5cm}}\ .
The sum of these, over all crossings
of $K_1$ under $K_2$, is the Gauss linking number.

   The year of 1847 was very important for the knot theory
(graph theory and topology as well).
On one hand,  Gustav Robert Kirchhoff (1824-1887)
published his fundamental paper on electrical circuits \cite{Kir}.
It has deep connections with knot theory,
however the relations were discovered only about
a hundred years later (e.g. the Kirchhoff complexity of a circuit corresponds
to the determinant of the knot or link determined by the circuit).
On the other hand, Johann Benedict Listing (1808-1882), a student of Gauss,
published his monograph (Vorstudien zur Topologie, \cite{Lis}).
A considerable part
of the monograph is devoted to knots. Even earlier, on April 1, 1836,
 Listing wrote a letter from Catania to "Herr Muller", his former school
teacher\footnote{Johann Heinrich M\"uller (1787-1844) was the mathematics
and astronomy master at {\it Musterschule} in Frankfurt which Listing entered in 1816
\cite{Bre1,Bre2}.}, with the heading
"Topology" [Its main contents were later incorporated in \cite{Lis}; see \cite{Bre2}. The entire text of the letter to M\"uller is published in \cite{Bre1}.
\ \\ \ \\

One can say much more on History of Knot Theory, some of this is described in my new book with PhD students \cite{PBIMW} where, in particular, the history of Knot tabulation is in detail described starting from Tait and Little and ending on B.Burton (who fully classified 19 crossings knots, and is working on extending his census of knots  up to 21 crossings). 
But I should not extend my first talk too much and in next talk I will concentrate on nonassociativity and Yang-Baxter homology and the talk will describe my personal rather convoluted  path with many digressions. 

The history of the Jones and HOMFLYPT polynomials will be discussed in detail elsewhere (starting from Jones discovery in May 1984, \cite{Jon1}, and our work with Pawe{\l} Traczyk in November and December of 1984; compare Subsection \ref{Entropic}).

\section{Attraction of nonassociative structures}\label{Attraction}
\subsection{Barbara Roszkowska question}
In 1985 or maybe in Spring of 1986 Basia\ \\
Roszkowska (PhD student of Anna Romanowska) told me that people from her group are interested in new objects in nonassociative algebra called quandles and a quandle is motivated by Knot Theory or more precisely Reidemeister moves \cite{Joy}. According to her I was saying that I am involved in generalizations of the Jones polynomial and that it is more interesting than quandles.
I do not remember our discussion so shame on me\footnote{But I should add ``Co si{\c e} odwlecze to nie uciecze" (``What is delayed will not run away" in Google translation). {\color{red} Added for e-print: See for example a recent paper with Ania Zamojska-Dzienio and my students on homology of Bol-Moufang quasigroups \cite{CCGOPZ}. }}. I remember however that their 
Polytechnic group organized a conference and that they may be interested in our (with Pawe{\l} Traczyk) work on generalizations of Jones polynomial, skein polynomial (later called HOMFLYPT polynomial). I wanted to go there, I was already at PKS (bus) stop, but it was so crowded that I finally gave up travel, and I got cold as it was very cold. Thus I lost there an early chance to cooperate with the Polytechnic group. Later I learned that one of the leaders of the group Jonathan Smith from Iowa State, was thinking about connecting his work with knot theory and even wrote a paper on the topic \cite{Smith}.
\subsection{Conway algebras}
In our work, with Pawe{\l} Traczyk, on generalizing Jones polynomial, November-December 1984, we formalized the 
fact that calculation should be independent on the order of resolving crossings. This led us to the concept of an abstract algebra which we called {\it Conway algebra}. We recall the definition in Subsection
\ref{Entropic}.

\begin{figure}
\centering
\scalebox{.35}{\includegraphics{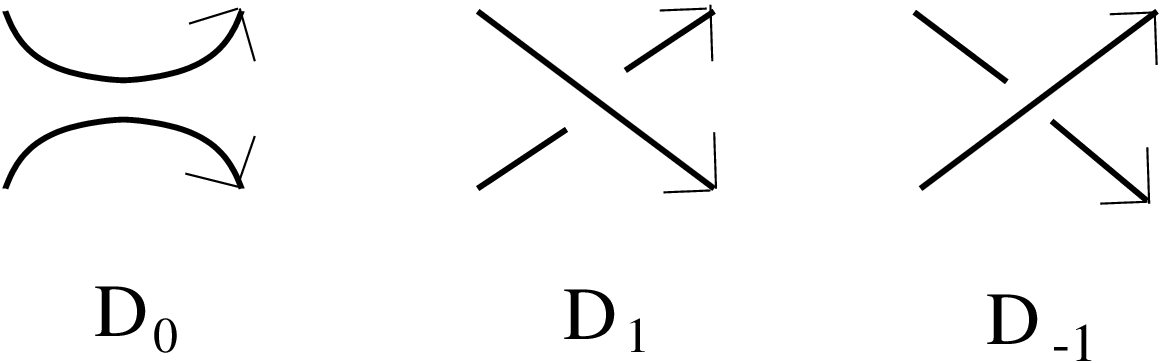}}
\caption{Oriented skein triple for the skein (HOMFLYPT) polynomial}
\label{skein-triple}
\end{figure}

\subsection{Link invariants and entropic condition}\label{Entropic}
In the papers with Pawel Traczyk \cite{PT1,PT2} we introduced a skein polynomial of links in $\mathbb R^3$ using linear skein relation:
$xD_1 +yD_{-1} =z D_0$ where $D_1,D_{-1} $ and $D_0$ are link diagrams identical 
outside some small disk and inside the disk they look like in the Figure \ref{skein-triple}. We were assuming that $x$ and $y$ are invertible but we also assumed  quickly (too quickly) that $z$ is also invertible. Thus we just put $z=1$. 
What is important here, we were musing whether a linear equation could be replaced 
by some magma, or more generally abstract algebra $\mathcal A$ with universe $A$ and two  binary operations, $*$ and $|$  such that:
$D_{-1}=D_1*D_0$ and $D_1=D_{-1}|D_0$.
Then an obvious condition for a linear relation that calculation does not depend on the order of crossing we resolve (the condition being $xy=yx$) is replaced by an interesting, not obvious condition $(a*b)*(c*d)=(a*c)*(b*d)$.\footnote{The conditions $(a|b)*(c|d)=(a*c)|(b*d)$ and $(a|b)|(c|d)=(a|c)|(b|d)$ follow automatically in Conway algebra.} We did not know then, in Fall of 1984, that this condition was studied before: first in 1929 by Burstin\footnote{Celestyn Burstin (1888-1938)
was born in Tarnopol, where he obtained ``Matura" in 1907, he moved to Vienna where in 1911 he completed university.
In 1929 he moved to Minsk where he was a member of the Belarusian National Academy of Sciences,
and a Director of the Institute of Mathematics of the Academy. In December 1937,
he was arrested on suspicion of activity as a
spy for Poland and Austria. He died in October 1938, when interrogated in a prison in Minsk (``Minskaja Tjurma");
he was rehabilitated March 2, 1956 \cite{Bur-1,Bur-M,Mal,Mio}.} and Mayer\footnote{Walter Mayer (1887--1948) is well known for Mayer-Vietoris sequence and for being
assistant to A.~Einstein at Institute for Advanced Study, Princeton \cite{Isa}. In Princeton he started investigation of generalization of chain complexes where $\partial^k=0$ while in the classical theory $k=2$, \cite{Mayer}.}  \cite{BuMa}. The name entropic magma was coined by Etherington in 1949, \cite{Eth}.
In \cite{PT1,PT2} we introduce the notion of Conway algebras so let us remind their definition below.

\begin{definition}\label{ConwayAlgebra} We say that an abstract algebra 
$\mathcal A= \{A; a_1,\,a_2,\ldots,\, |,\, * \}$ composed of a set $A$, two binary operations $|$ and $*$ and a countable number of $0$-argument operations $a_i$, $i\geq 1$, is a Conway algebra if the following conditions are satisfied:
$$
\left.
\begin{array}{llll}
\mbox{C1} & a_n| a_{n+1} &=& a_n\\
\mbox{C2} & a_n \ast a_{n+1}& =& a_n\\
\end{array}
\right\} 
\mbox{ initial values properties}
$$

$$
\left.
\begin{array}{llll}
\mbox{C3} & (a| b)| (c| d)&=&(a|
c)|(b| d) \\
\mbox{C4} &(a| b)\ast(c| d)&=&(a\ast
c)|(b\ast d) \\
\mbox{C5} &(a\ast b)\ast(c\ast d)&=&(a\ast c)\ast(b\ast d) \\
\end{array}
\right\} 
\mbox{entropy properties}
$$

$$
\left.
\begin{array}{llll}
\mbox{C6} &(a| b)\ast b &=& a\mbox{\ \ \ \ \ \ \ } \\
\mbox{C7} &(a\ast b)| b &=& a \\
\end{array}  
\right\} 
\mbox{ inversion properties.}
$$
\end{definition}
We proved with Pawe{\l} Traczyk that any Conway algebra leads to link invariant. From the nonassociative point of view we showed that entropy axiom corresponds to the fact that computation of the polynomial does not depend on the order of resolving different crossings. We will demonstrate this fact in Subsection~\ref{skein-entropic} for any local skein relation that is performed far apart from one another (i.e.,``no action at a distance").

\subsection{Local skein relations and general entropic relation}\label{skein-entropic}

It was important historically to see that computing Kauffman bracket or HOMFLYPT 
polynomial of links in $\mathbb R^3$ does not depend on the order of crossings we resolve. As mentioned before, this let the authors of \cite{PT1} to consider abstract algebra which they called Conway algebra and which satisfies entropic relation $(a*b)*(c*d)=(a*c)*(b*d)$.
In general 3-manifold the property should be somehow reformulated.
The setting is as follows: \\
consider two disjoint oriented balls $B_1^3$ and $B_2^3$ in an oriented 3-dimensional manifold $M$ and fix the part of a link outside  $B_1^3$ and $B_2^3$.
We now consider skein relations taking place only inside $B_1^3$  and $B_2^3$.
We ask whether a calculation done in $B_1^3$ before calculation in $B_2^3$ gives the same result as calculation done first in $B_2^2$ and then in $B_1^3$. 

Let us try to write it in much general terms, again stressing the fact that changing the order of disks $B_1^3$ and $B_2^3$ produces the same result if we work with 
commutative ring $R$. Let us fix notation: $D^{B_1,B_2}_{T_1,T_2}$ denote a link 
which is fixed outside $B_1^3$ and $B_2^3$ and in $B_1^3$ and $B_2^3$ has a tangle 
$T_1$ and $T_2$ respectively. Assume that in $B_1^3$ we have a skein relation 
$\sum_{i=0}^{n_1} b_i^{(1)}T_i^{(1)}=0$ and in $B_2^3$ we have a skein relation 
$\sum_{i=0}^{n_2} b_i^{(2)}T_i^{(2)}=0$. Assume also, for simplicity that 
$b_{n_1}^{(1)}$ and $b_{n_2}^{(2)}$ are invertible in the ring $R$. Then we can 
compute $D^{B_1,B_2}_{T_{n_1}^{(1)},T_{n_2}^{(2)}}$ starting from the relation in $B_1^3$ or from relation in $B_1^3$ and for commutative $R$ the result is independent on the order of chosen balls $B_i^3$.
\begin{proof}
Let us perform calculation starting from the relation in the disk $B_1^3$. We have:
\begin{eqnarray*}
D^{B_1,B_2}_{T_{n_1}^{(1)},T_{n_2}^{(2)}} &=& (-b_{n_1}^{(1)})^{-1}\sum_{i=0}^{n_1-1} b_i^{(1)}D^{B_1,B_2}_{T_{i}^{(1)},T_{n_2}^{(2)}}\\
&=& (-b_{n_1}^{(1)})^{-1}\sum_{i=0}^{n_1-1} b_i^{(1)}\bigg((-b_{n_2}^{(2)})^{-1}\sum_{j=0}^{n_2-1} b_j^{(2)}D^{B_1,B_2}_{T_{i}^{(1)},T_{j}^{(2)}}\bigg)\\
&=& (b_{n_1}^{(1)})^{-1}\sum_{0\leq i < n_1, 0\leq j < n_2}b_i^{(1)}(b_{n_2}^{(2)})^{-1}b_j^{(2)}D^{B_1,B_2}_{T_{i}^{(1)},T_{j}^{(2)}}.
\end{eqnarray*}
If we use the skein relation in the disk $B_2^3$ first, we get analogously:
\begin{equation*}
D^{B_1,B_2}_{T_{n_1}^{(1)},T_{n_2}^{(2)}}= (b_{n_2}^{(2)})^{-1}\sum_{0\leq i < n_1, 0\leq j < n_2}b_j^{(2)}(b_{n_1}^{(1)})^{-1}b_i^{(1)}D^{B_1,B_2}_{T_{i}^{(1)},T_{j}^{(2)}}.  
\end{equation*}
In commutative ring $R$ we have $b_i^{(1)}b_j^{(2)}=b_j^{(2)}b_i^{(1)}$ for all $i$ and $j$ so both expressions are equal.
\end{proof}
%\subsection{General entropy condition}\label{Entropic}
We are ready now to bring to our consideration the general entropy condition. 
We follow here mostly \cite{SmRo,RoSm}.\\
Consider a set $X$ with two operations, one $n$-ary, $*_1: X^n \to X$ and the second $m$-ary, $*_2: X^m \to X$.
We say that $(X;*_1,*_2)$ is a generalized entropic algebra (that is satisfies the entropic condition) if
for $mn$ elements of $X$, say $D_{i,j}$ where $1\leq i \leq m$ and $1\leq j \leq n$ we have entropic condition:
\begin{equation*}
*_2\bigg(*_1(r_1),*_1(r_2),\ldots,*_1(r_m)\bigg) = *_1\bigg(*_2(c_1),*_2(c_2),\ldots,*_2(c_n)\bigg),    
\end{equation*}
where 
\begin{equation*}
r_i=(D_{i,1},D_{i,2},\ldots,D_{i,n})\,\,\text{and}\,\, c_j=(D_{1,j},D_{2,j},\ldots,D_{m,j})
\end{equation*}
are the rows and columns, respectively of the matrix $\{D_{i,j}\}$ described below.

\[ \{D_{i,j}\}=
\left[
\begin{array}{cccccc}
D_{1,1} & D_{1,2} & D_{1,3} &  \cdots & D_{1,n-1} & D_{1,n} \\
D_{2,1} & D_{2,2} & D_{2,3} &  \cdots & D_{2,n-1} & D_{2,n} \\
D_{3,1} & D_{3,2} & D_{3,3} &  \cdots & D_{3,n-1} & D_{3,n} \\
\cdots  & \cdots   &\cdots  &\cdots   &\cdots     & \cdots  \\
D_{m-1,1} & D_{m-1,2} & D_{m-1,3} & \cdots & D_{m-1,n-1} & D_{m-1,n} \\
D_{m,1}   & D_{m,2}   & D_{m,3}   & \cdots & D_{m,n-1}   & D_{m,n}
\end{array}
\right],
\]
We can reformulate our condition by saying that $*_2$ is a homomorphism of the magma $(X,*_1)$ and $*_1$ is a
homomorphism of the magma $(X,*_2)$.
The case of $m=2=n$ is the classical case and the entropic condition can be written as 
\begin{equation*}
(a*_1b)*_2(c*_1d)=(a*_2c)*_1(b*_2d).
\end{equation*}
The name ``entropic" means ``inner turning" refering to swamping $b$ and $c$ (see \cite{RoSm,SmRo}.)
Entropic magma was considered for the first time in \cite{BuMa} in 1929\footnote{The work of A.K. Suschkewitsch should be also mentioned \cite{Sus,Hol}.} and the name entropic magma was coined in \cite{Eth} in 1949.
\subsection{\texorpdfstring{$t_k$\,- moves and link invariants}{t\unichar{"005E}2\unichar{"00D7}S\unichar{"005E}1}}\label{3.5}
At the beginning of 1986, I thought a little about the multivariable Alexander polynomial (useful a year later when I constructed (or discovered) skein modules). I was motivated by the original Conway paper \cite{Con}. This consideration led me, in the Spring of 1986, to analyze what I called $t_k$ and $\bar t_{2k}$ moves (for unoriented links I adopted H. Morton suggestion and called our moved, $k$-moves, see Figures \ref{3-move-trefoil-feightkn} and \ref{Quandle-n-move}). This led to the paper \cite{Prz1}. I liked the paper a lot and was coming back to $k$-moves often. One example was analysis of $k$-moves for a general skein relation (and skein module $\mathcal S_{m+1,\infty}(M))$, see recent paper \cite{BCGIKMMPW} devoted to cubic skein modules, $\mathcal S_{4,\infty}(M)$, (in preparation).
An analysis of $k$-moves on Khovanov homology \cite{Kho} is still waiting.\footnote{See however the work by Wojtek Politarczyk, Maciej Borodzik, and Marithania Silvero  \cite{BPS}.} Figure \ref{Quandle-n-move} illustrate the effect on $n$-move on a distributive structure (quandle).

Naturally, I did not have time to provide details related to $n$-moves during my second talk in June 2023 however, in this extended version of my talk, I would like, at least, to show one basic and important calculation in the following Subsection.

\subsection{Two points of view on \texorpdfstring{$n$\,-moves formula}{n\unichar{"005E}2\unichar{"00D7}S\unichar{"005E}1}}\label{TwoPoint}
The calculation I have in mind concern affine recursive relation and $n$ moves on link diagrams. Here is the story:\\
In linear algebra or combinatorics we often meet recursive relations of type
\begin{equation*}
Q_m = b_{m-1}Q_{m-1}+b_{m-2}Q_{m-2}+\ldots+b_0Q_0 +c_mQ_c
\end{equation*}
where $Q_i$, $Q_c$ are some additive objects, say elements of a $k$-module over commutative ring $k$ and 
$b_i,c_n$ are the ring elements. 
It is well known, from XIX century how to solve this (even in Maxwell work I did see related considerations). Similar recursion often happen in knot theory with additional twist that we can use sometimes topological ``tricks". I will describe here one such situation which I first noticed in \cite{Prz1} and we use it also very recently in 
our analysis (Mathathon 8 \cite{BCGIKMMPW} )of $n$ moves on cubic skein modules. The longish calculation would follow (of course I skipped it in my talk).

%{\color{red} I have to decide which paper it will go to.}\\
As we often do in mathematics, at least from the times of Euler, we count (compute) some objects in two different ways and often get interesting identities. We use the same principle to see how $n$th move on a link diagram is computed using  $m$th degree skein relation. 
In fact to have good understanding of our calculation let us perform it 
in the case of a quartic relation first (quartic skein module)
Later it would be natural to extend our formulas for arbitrary $m$. In fact this ``Eulerian" trick was first used for quadratic relation (Kauffman and Dubrovnik skein relations, see \cite{Prz1,DIP}). \\
Thus let us consider the quartic relation; for simplicity in the basic quartic relation
\begin{equation*}
b_4D_4+b_3D_3+b_2D_2+b_1D_1+b_0D_0+ b_\infty D_{\infty}=0
\end{equation*}
(see Figure \ref{quartic-relation}), we put $b_4=-1$ and
get more manageable recursive relation $$D_4=b_3D_3+b_2D_2+b_1D_1+b_0D_0+ b_\infty D_{\infty}.$$
Considering the denominator of this relation we get the formula for $b_\infty t$ where $t$ represent the trivial component ($tD = D\sqcup T_1$); see Figure \ref{quartic-relation-den2}.
$$a^{-4}=a^{-3}b_3+a^{-2}b_2+ a^{-1}b_1+b_0+b_\infty t$$
so
\begin{equation}\label{t-formula}
b_\infty t= a^{-4}-a^{-3}b_3 - a^{-2}b_2- a^{-1}b_1-b_0
\end{equation}
and for invertible $b_\infty$ we can compute $t$.\\
It is very educational to find few terms of recursive resolution 
of $D_n$ using quartic relation ($\stackrel{(k)}{=} $ means that we performed  recursive relation $k$ times).
\begin{eqnarray*}
D_n&\stackrel{(1)}{=}& b_3D_{n-1}+b_2D_{n-2}+b_1D_{n-3}+b_0D_{n-4}+ a^{4-n}b_{\infty}D_{\infty} \stackrel{(2)}{=}\\
&&b_3(b_3D_{n-2}+b_2D_{n-3}+b_1D_{n-4}+b_0D_{n-5}+ a^{5-n}b_{\infty}D_{\infty})+\\
&&b_2D_{n-2}+b_1D_{n-3}+b_0D_{n-4}+ a^{4-n}b_{\infty}D_{\infty}=\\
&&(b_3^2+b_2)D_{n-2}+(b_2b_3+b_1)D_{n-3}+(b_1b_3+b_0)D_{n-4}+b_0b_3D_{n-5}+\\
&&(b_3a^{4-n}+a^{5-n})b_{\infty}D_{\infty} \stackrel{(3)}{=}\\
&&(b_3^2+b_2)(b_3D_{n-3}+b_2D_{n-4}+b_1D_{n-5}+b_0D_{n-6} + a^{6-n}b_{\infty}D_{\infty})+\\
&&(b_2b_3+b_1)D_{n-3}+(b_1b_3+b_0)D_{n-4} + b_0b_3D_{n-5}+
(b_3a^{5-n}+a^{4-n})b_{\infty}D_{\infty}=\\
&&(b_3^3+2b_2b_3+b_1)D_{n-3}+(b_2b_3^2+b_2^2+b_1b_3+b_0)D_{n-4}+(b_1(b_3^2+b_2)+b_0b_3)D_{n-5}+\\
&&b_0(b_3^2+b_2)D_{n-6}+ ((b_3^2+b_2)a^{6-n}+b_3a^{5-n}+a^{4-n})b_{\infty}D_{\infty}\stackrel{(4)}{=}
\end{eqnarray*}
\begin{eqnarray*}
&&(b_3^3+2b_2b_3+b_1)(b_3D_{n-4}+b_2D_{n-5}+b_1D_{n-6}+b_0D_{n-7}+a^{7-n}b_{\infty}D_{\infty})+\\
&&(b_2b_3^2+b_2^2+b_1b_3+b_0)D_{n-4}+(b_1(b_3^2+b_2)+b_0b_3)D_{n-5}+\\
&&b_0(b_3^2+b_2)D_{n-6}+ ((b_3^2+b_2)a^{6-n}+b_3a^{5-n}+a^{4-n})b_{\infty}D_{\infty}=\\
&&(b_3(b_3^3+2b_2b_3+b_1)+b_2(b_3^2+b_2)+b_1b_3+b_0)D_{n-4}+\\
&&(b_2(b_3^3+2b_2b_3+b_1)+b_1(b_3^2+b_2)+b_0b_3)D_{n-5}+\\
&&(b_1(b_3^3+2b_2b_3+b_1)+b_0(b_3^2+b_2))D_{n-6}+b_0(b_3^3+2b_2b_3+b_1)D_{n-7}+\\
&&\bigg((b_3^3+2b_2b_3+b_1)a^{7-n}+ (b_3^2+b_2)a^{6-n}+b_3a^{5-n}+a^{4-n}\bigg)b_{\infty}D_{\infty}.
\end{eqnarray*}
From this we can see a pattern, easily proven by induction, using the sequence $P_n$ defined by 
$P_{-3} = P_{-2} = P_{-1} = 0,\,P_0 = 1$ and a recursive relation
\begin{equation*}
P_n=b_3P_{n-1}+b_2P_{n-2}+b_1P_{n-3}+b_0P_{n-4}
\end{equation*}
(to list more terms, we have: $P_1=b_3$, $P_2=b_3^2+b_2$, $P_3=b_3^3+2b_2b_3+b_1$.) 
We can continue our calculation of $D_n$ using our polynomials $P_n$ to get:
\begin{eqnarray}
\label{4nk-formula}
D_n &\stackrel{(4)}{=}& P_4D_{n-4} + (b_2P_3+b_1P_2+b_0P_1)D_{n-5}+(b_1P_3+b_0P_2)D_{n-6}+b_0P_3D_{n-1}+\\ \notag
&&\bigg(\sum_{i=0}^3 a^{4-n+i}P_i\bigg)b_\infty D_\infty =\ldots \stackrel{(k)}{=}\\ \notag
&&P_kD_{n-k} +(b_2P_{k-1}+b_1P_{k-2}+b_0P_{k-3})D_{n-k-1}+\\ \notag
&&(b_1P_{k-1}+b_0P_{k-2})D_{n-k-2} + b_0P_{k-1}D_{n-k-3}+\bigg(\sum_{i=0}^{k-1} a^{4-n+i}P_i\bigg)b_\infty D_\infty.
 \end{eqnarray}
We introduce notation 
\begin{equation*}
U^{(4)}_{n,k}=\sum_{i=0}^{k-1} a^{4-n+i}P_i;
\end{equation*}
upper index $4$  refers to the fact that we deal with quartic skein relation. For example, we use often the formula for $D_n$ in the case of $k=n-3$ in that case we get:
\begin{eqnarray}
\label{DnD1D0}
D_n&\stackrel{(n=k-3)}{=}&
P_{n-3}D_3+ (b_2P_{n-4}+b_1P_{n-5}+b_0P_{n-6})D_2 +  (b_1P_{n-4}+b_0P_{n-5})D_1 \\ \notag
&+&b_0P_{n-4}D_{0} +U^{(4)}_{n,n-3}b_{\infty}D_{\infty}.
\end{eqnarray}
We will show now how to find quickly the closed formula for $U^{(4)}_{n,k}$. It is the standard method, pioneered by Euler, to look at a  problem from two different perspectives and to get some nice identity comparing results. In fact this ``Eulerian  trick" was used for quadratic relation (Kauffman and Dubrovnik skein relations, see \cite{Prz1,DIP}). We have already formula for $U^{(4)}_{n,k}$ as a sum. Alternative approach is to consider denominators of diagrams used in Formula (\ref{4nk-formula}); see Figure \ref{quartic-relation-den2}:
\begin{eqnarray}
\label{Dn4-denom}
a^{-n}&=& a^{k-n}P_{k} + a^{k-n+1}(b_2P_{k-1}+b_1P_{k-2}+b_0P_{k-3})\\ \notag
&+&a^{k-n+2}(b_1P_{k-1}+b_0P_{k-2})+a^{k-n+3}b_0P_{k-1}+U^{(4)}_{n,k}b_{\infty} t 
\end{eqnarray}

Assuming that $b_\infty t$ is invertible and substituting  Formula \ref{t-formula} into Formula \ref{Dn4-denom} we obtain close formula for $U^{(4)}_{n,k}$:
\begin{eqnarray}
\label{closedFormula4} 
U^{(4)}_{n,k}&=&\sum_{i=0}^{k-1} a^{4-n+i}P_i = \frac{a^{k-n}P_{k} + a^{k-n+1}(b_2P_{k-1}+b_1P_{k-2}+b_0P_{k-3})}{a^{-3}b_3 + a^{-2}b_2 + a^{-1}b_1+b_0-a^{-4}}\\ \notag
&+&\frac{a^{k-n+2}(b_1P_{k-1}+b_0P_{k-2})+a^{k-n+3}b_0P_{k-1}-a^{-n}}{a^{-3}b_3 + a^{-2}b_2 + a^{-1}b_1+b_0-a^{-4}}
\end{eqnarray}

\begin{figure}[hp]
\centering
\scalebox{.19}{\includegraphics{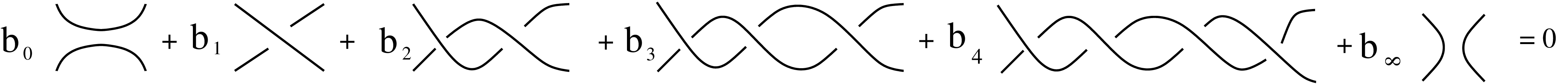}}
\caption{Quartic relation}
\label{quartic-relation}
\end{figure}

\begin{figure}[hp]
\centering
\scalebox{.19}{\includegraphics{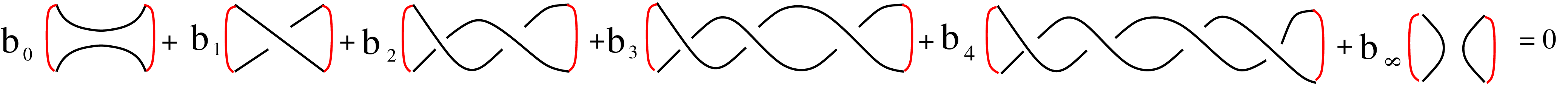}}
\caption{Denominator of the quartic relation; 
$\big(b_0+a^{-1}b_1+a^{-2}b_2+ a^{-3}b_3+a^{-4}b_4+ b_\infty t\big)t=0 $. Compare Equation \ref{t-formula} with $b_4=-1$} 
\label{quartic-relation-den2}
\end{figure}

Having already Formula (\ref{closedFormula4}) we can prove it quickly by induction without referring to ``topological trick". 
\begin{proof} We use induction on $k$. For $k=1$ we see directly that both sides are equal to $a^{4-n}$. Now we consider $k>1$ and assume (inductive assumption) that Formula (\ref{closedFormula4}) holds for $k-1$. We calculate:
\begin{eqnarray*}
&&(a^{-3}b_3 + a^{-2}b_2 + a^{-1}b_1+b_0-a^{-4})(\sum_{i=0}^{k-1} a^{4-n+i}P_i)=\\
&&(a^{-3}b_3 + a^{-2}b_2 + a^{-1}b_1+b_0-a^{-4})((\sum_{i=0}^{k-2} a^{4-n+i}P_i) +a^{3-n+k}P_{k-1})\stackrel{Induction}{=}\\
&&a^{k-n-1}P_{k-1}+ a^{k-n}(b_2P_{k-2}+b_1P_{k-3}+b_0P_{k-4})+a^{k-n+1}(b_1P_{k-2}+b_0P_{k-3})+\\
&& a^{k-n+2}b_0P_{k-2}-a^{-n}
+(a^{-3}b_3 + a^{-2}b_2 + a^{-1}b_1+b_0-a^{-4})a^{3-n+k}P_{k-1}= \\
&&a^{k-n}(b_3P_{k-1}+ b_2P_{k-2}+b_1P_{k-3}+b_0P_{k-4})+\\ 
&& a^{k-n+1}(b_2P_{k-1}+ b_1P_{k-2}+b_0P_{k-3})
+ a^{k-n+2}(b_1P_{k-1}+ b_0P_{k-2})+\\
&& a^{k-n+3}b_0P_{k-1} + a^{k-n-1}P_{k-1}-a^{k-n-1}P_{k-1}-a^{-n}= \\
&& a^{k-n}P_k + a^{k-n+1}(b_2P_{k-1}+ b_1P_{k-2}
+b_0P_{k-3})+ a^{k-n+2}(b_1P_{k-1}+ b_0P_{k-2})+\\
&& a^{k-n+3}b_0P_{k-1} -a^{-n}
\end{eqnarray*}
as needed.
\end{proof}

\subsection{\texorpdfstring{$n$-move in the $(m+1)$ ($m$th degree) skein module}{n\unichar{"005E}2\unichar{"00D7}m\unichar{"005E}1}}\label{nmovemdegree}\ \\
The effect on an $n$-move on $m$th degree recursive skein relation is analogous (direct generalization) but without having small cases (like $m=4$) computed before it would be difficult to perform.\\
Let us consider the $m$ degree skein relation:
\begin{equation*}
b_mD_m+b_{m-1}D_{m-1}+ \ldots + b_1D_1 +b_0D_0+ b_\infty D_\infty =0 \mbox{ and } D^{(1)}=aD.    
\end{equation*}
From this we get, assuming $b_m = -1$:
\begin{eqnarray*}
D_n&=&b_{m-1}D_{n-1} + b_{m-2}D_{n-2}+ b_{m-3}D_{n-3}+\ldots + b_{1} D_{n-m+1}+ b_0D_{n-m}+ a^{m-n}b_{\infty}D_{\infty}\\
&=& b_{m-1}(b_{m-1}D_{n-2} + b_{m-2}D_{n-3}+ \ldots + b_{1} D_{n-m}+ b_0D_{n-m-1}+ a^{m-n+1}b_{\infty}D_{\infty})\\
&+&b_{m-2}D_{n-2}+  b_{m-3}D_{n-3}+\ldots + b_1 D_{n-m+1}+ b_0D_{n-m}+ a^{m-n}b_{\infty}D_{\infty}\\
&=&(b_{m-1}^2+b_{m-2})D_{n-2}+ (b_{m-2}b_{m-1}+ b_{m-3})D_{n-3}+ \ldots +(b_0b_{m-1}+b_0)D_1\\
&+& b_0b_{m-1}D_0 + (b_{m-1}a^{m-n+1}+ a^{m-n})b_{\infty}D_{\infty}\\
&=&(b_{m-1}^2+b_{m-2})(b_{m-1}D_{n-3}+b_{m-2}D_{n-4}+b_{m-3}D_{n-5}+a^{m-n+2}b_\infty D_\infty)\\
&+& (b_{m-2}b_{m-1}+ b_{m-3})D_{n-3}+ \ldots +(b_0b_{m-1}+b_0)D_1 + b_0b_{m-1}D_0\\
&+&(b_{m-1}a^{m-n+1}+ a^{m-n})b_{\infty}D_{\infty}\\
&=&P_{3,0}^{(m)}D_{n-3}+ P_{3,1}^{(m)} D_{n-4}+ P_{3,2}^{(m)}D_{n-5}+\ldots+ P_{3,m-3}^{(m)}D_{n-m}+U_{n,3}^m. 
\end{eqnarray*}
We can introduce notation which will allow us to write formula in compact way: First $P_{n,s}^{m}$ defined first for $s=0$:
\begin{equation*}
P^{(m)}_{j,0}=0\,\,\text{for}\,\,-m <j <0\,\,\,P^{(m)}_{0,0}=1
\end{equation*}
and the recursive relation is 
$$P_{n,0}^{(m)} = b_{m-1}P_{n-1,0}^{(m)}+b_{m-2}P_{n-2,0}^{(m)}+...+b_{1}P_{n-m+1,0}^{(m)}+b_0P_{n-m,0}^{(m)}.$$
Thus we have: 
\begin{equation*}
P_{-m,0}^{(m)}=\frac{1}{b_0},\,P_{1,0}^{(m)}=b_{m-1},\, P_{2,0}^{(m)}=b_{m-1}^2+b_{m-2},\,\text{etc.}  
\end{equation*}
We define $P^{(m)}_{k,s}$ by the formula:
$$P^{(m)}_{k,s}= \sum_{j=0}^{m-1} b_{m-1-j-s}P^{(m)}_{k-1-j}.$$
Notice that for $s=0$ it is our recursive definition of $P^{(m)}_{k,0}$ 
(which we will be denoted shortly $P^{(m)}_k$ or $P_k$ if $m$ is fixed for a given part of the paper).
The formula can be easily checked by induction. The special cases, very useful to produce the general formula, were studied for $m=2,3,4$, see Formulas (\ref{4nk-formula}) and (\ref{closedFormula4}) for the quartic case ($m=4$).\\
We can also find quickly a closed formula for $U^{(m)}_{n,k}$ by considering the denominator of the formula for $D_n$.
\begin{eqnarray}
\label{Identity-m}
U^{(m)}_{n,k} &=& \sum_{i=0}^{k-1}a^{m-n+i}P_i = \frac{a^{-n}-P_{k,0}^{(m)} a^{k-n} - P_{k,1}^{(m)} a^{k-n+1} - \ldots - P_{k,m-1}^{(m)}a^{k-n+m-1}}{a^{-m}- a^{-m+1}b_{m-1} -a^{-m+2}b_{m-2} - \ldots -a^{-1}b_1-b_0}
\end{eqnarray}
For small $m$, say $m=4$ i.e. quartic  relation, we get a short closed formula, see Formula  (\ref{closedFormula4}). 
%\ref{4nk-formula}

\begin{proof}
We compare it with the denominator of the basic $m$th degree skein relation:
\begin{equation}\label{comp-trivial-m}
a^{-m}=a^{-m+1}b_{m-1} + a^{-m+2}b_{m-2}+ ... +a^{-1}b_1 +b_0 +b_\infty t
\end{equation}
Equivalently:
$$b_\infty t= a^{-m}- a^{-m+1}b_{m-1} - a^{-m+2}b_{m-2} -...-a^{-1}b_1-b_0= a^{-m}- \sum_{i=0}^{m-1} a^{-i}b_i. $$
From which, assuming $b_\infty$ is invertible, we can compute $t$.
\begin{eqnarray}
\label{MainEquation}
D_n &=& P_{k,0}^{(m)} D_{n-k} + P_{k,1}^{(m)} D_{n-k-1} + P_{k,2}^{(m)} D_{n-k-2} +\ldots \\ \notag
&+& P_{k,m-1}^{(m)} D_{n-k-m+1} + U^{(m)}_{n,k}b_\infty D_\infty = \sum_{i=0}^{m-1} P_{k,i}^{(m)} D_{n-k-i} + U^{(m)}_{n,k}b_\infty D_\infty.
\end{eqnarray}

The denominator of Equation (\ref{MainEquation}) yields the closed formula for $U^{(m)}_{n,k}$, as follows:
\begin{eqnarray*}
a^{-n} &=& P_{k,0}^{(m)} a^{k-n} + P_{k,1}^{(m)} a^{k-n+1} + P_{k,2}^{(m)} a^{k-n+2} + \ldots + P_{k,m-1}^{(m)}a^{k-n+m-1}  + U^{(m)}_{n,k}b_\infty t\\
&=& \sum_{i=0}^{m-1} P_{k,i}^{(m)}a^{k-n+i}  + U^{(m)}_{n,k}b_\infty t.
\end{eqnarray*}
%\begin{equation}\label{MainEquationDen}
%a^{-n}= P_{k,0}^{(m)} a^{k-n} + P_{k,1}^{(m)} a^{k-n+1} + P_{k,2}^{(m)} a^{k-n+2} + ... + P_{k,m-1}^{(m)}a^{k-n+m-1}  + U^{(m)}_{n,k}b_\infty t =
%\end{equation}
Therefore,
\begin{eqnarray*}
U^{(m)}_{n,k}b_\infty t &=& a^{-n}-P_{k,0}^{(m)} a^{k-n} - P_{k,1}^{(m)} a^{k-n+1} - \ldots - P_{k,m-1}^{(m)}a^{k-n+m-1}\\
&=&a^{-n}- \sum_{i=0}^{m-1} P_{k,i}^{(m)}a^{k-n+i}.
\end{eqnarray*}
Substituting in this Equation (\ref{comp-trivial-m}) we get
\begin{equation*}
U^{(m)}_{n,k}(a^{-m}- \sum_{i=0}^{m-1} a^{-i}b_i)=a^{-n}- \sum_{i=0}^{m-1} P_{k,i}^{(m)}a^{k-n+i},    
\end{equation*} 
which for $b_\infty t$ invertible is equivalent to Formula (\ref{MainEquation}).
\end{proof}
For a fixed $m$ one can find now the closed formula for $U^{(m)}_{n,k}$ (in general, Formula  (\ref{Identity-m}), and for $m=4$ Formula (\ref{closedFormula4}). As in the case of $m=4$, we can prove directly, by induction on $k$, the formula without using ``topological trick".

%\newpage
\subsection{Mietek D{\c a}bkowski and Burnside groups of links}\label{Mietek}
In the Summer of 1999, Mieczys{\l}aw (Mietek) D{\c a}bkowski came to study with me at GWU. He was recommended by my Gdansk friend Witold (Witek) Rosicki. He tried many open problems\footnote{One of them was to compute the Kauffman bracket skein module of the 3-manifold $F_{0,3}\times S^1$, where $F_{g,d}$ denotes the compact surface of genus $g$ ans $d$ boundary components. After several attempts Mietek  with Maciej (Maciek) Mroczkowski solved the problem, showing in particular that the module is free \cite{DM}. Another problem was to compute Kauffman bracket skein module of lattice crossings. We solved this partially later, see e.g. \cite{DLP,DaPr4}.} and the big success came with relation to Montesinos-Nakanishi 3-move conjecture. Y. Nakanishi  formulated  the conjecture in 1981. 
J. Montesinos
analyzed $3$-moves before, in connection with $3$-fold dihedral branch
coverings, and asked related but different question.
%\newpage
\begin{conjecture} Every link can be reduced to a trivial link by 3-moves (see Figure \ref{3-move-trefoil-feightkn} for 3-move and  reduction of the trefoil knot and the figure eight knot).
\end{conjecture}
The conjecture was proved in many special cases (e.g. for links up to $12$ crossings in \cite{Che}) but it was an
open problem for over 20 years. In 2002 it was showed by M.K.~D{\c a}bkowski and the author
that the conjecture does not hold. The smallest counterexample we found, suggested first by Q.~Chen,
has 20 crossings, see Figure \ref{Chen5braidTrieste}, \cite{DaPr1}.\footnote{One year after these talks, we proved that in fact the Montesinos-Nakaniahi 3-move conjecture holds for links up to 19 crossings. With respect to 20 crossings links we showed that there are six pairwise non isotopic links (all of them 5-braids of 20 crossings including the Chen link and its mirror image) which are not 3-move equivalent to any trivial link. All of them are 3-move equivalent to the Chen link (\cite{BBGIMMP}, in preparation).}

\begin{figure}[H]
\centering
\scalebox{.37}{\includegraphics{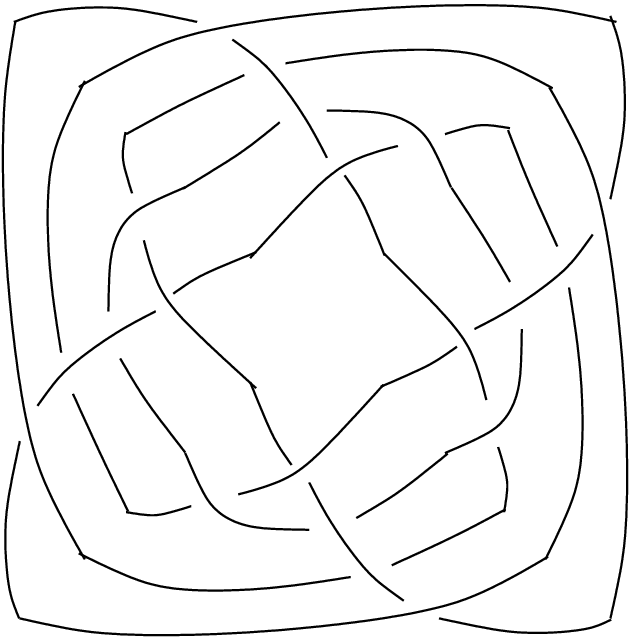}}
\caption{The smallest counterexample to Montesinos-Nakanishi 3-move conjecture, up to mirror image; %$(\sigma_2\sigma_1^{-1}\sigma_2\sigma_3\sigma_4^{-1})^4$; 
$(\sigma_2\sigma_3\sigma_2\sigma_4^{-1}\sigma_1^{-1})^4$
$20$-crossings (the Chen link)}
\label{Chen5braidTrieste}
\end{figure}
%Figure 15

\begin{figure}[H]
\centering
\scalebox{.30}{\includegraphics{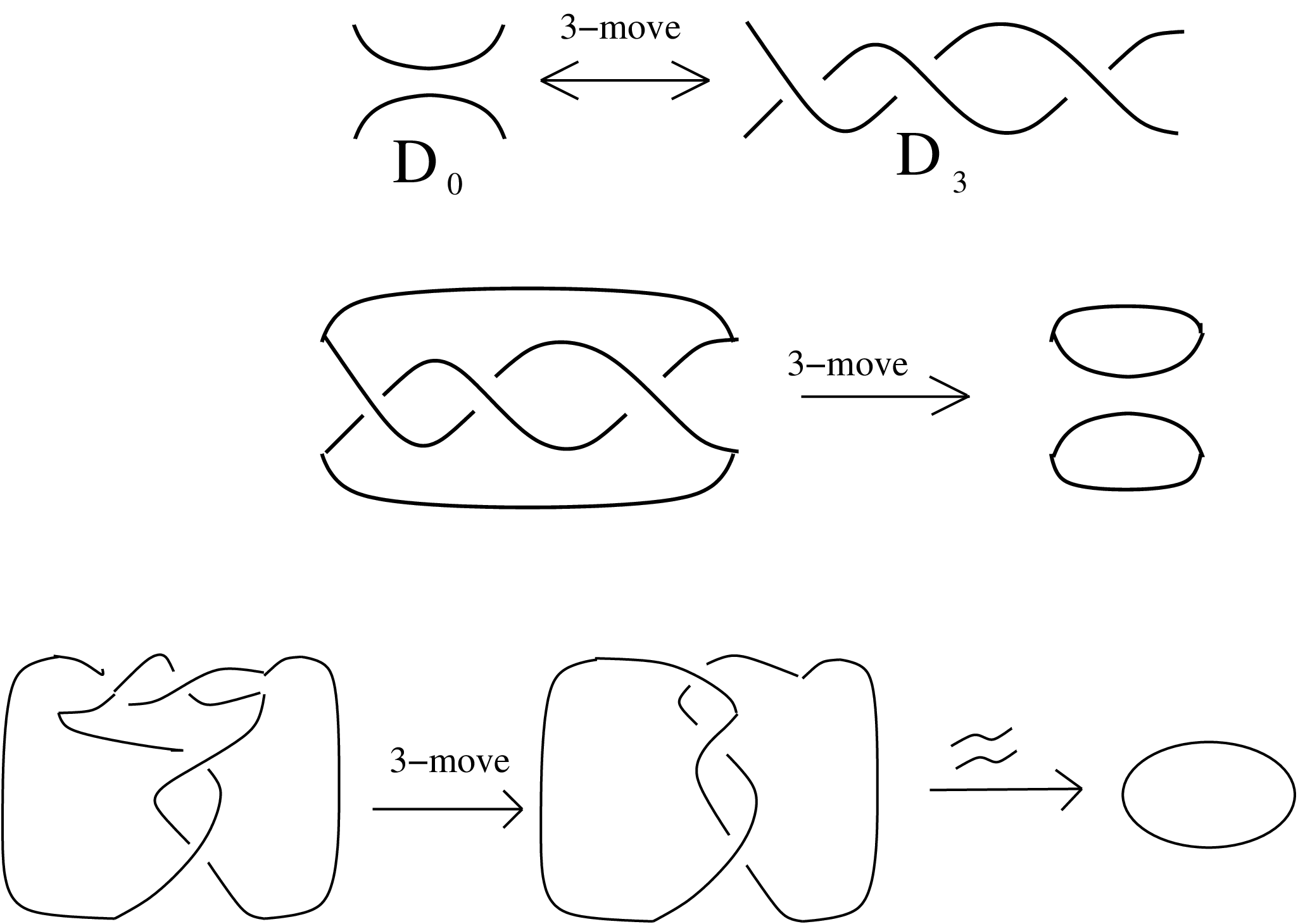}}
\caption{3-move and reduction of the trefoil knot and the figure eight knot to a trivial link.}
\label{3-move-trefoil-feightkn}
\end{figure}
%Figure 16

We found a counterexample by introducing the concept of {\it Burnside groups of links} and showing that the $n$th group is preserved by $n$ moves; see \cite{DaPr1,DaPr2,DaPr3}.

Let us recall first the famous Burnside conjecture concerning groups
\begin{equation*}
B(r,n)=F_r/(w^n),
\end{equation*}
where $F_r$ is the free group on $r$ generators and $w$ is an arbitrary element of $F_r$. 
\begin{problem}\label{Burnside} (Burnside 1902 \cite{Bur}) For what values of $r$ and $n$ is the Burnside group $B(r,n)$ finite?
\end{problem}
Already Burnside proved that for $n=3$ the group $B(r,3)$ is finite \cite{Bur}. Furthermore Levi and van der Waerden \cite{L-W} proved that the group $B(r,3)$ has $3^{n+ \binom{n}{2}+ \binom{n}{3}}$ elements; in particular, $|B(3,3)|=3^7$ and $|B(4,3)|=3^{14}$.\footnote{The Burnside groups of exponent 4 and 6 were proven to be finite by  I. N. Sanov in  1940, \cite{San}, and M. Hall in 1958, \cite{Hall},  respectively.
However, it was proved by Novikov and Adjan in 1968 \cite{N-A-1, N-A-2,
N-A-3} that $B(r,n)$ is infinite whenever $r>1$ and $n$ is an odd and $n\geq
4381$ (this result was later improved by Adjan \cite{Adj}, who showed that $
B(r,n)$ is infinite if $r>1$ and $n$ odd and $n\geq 665$). S. Ivanov proved
that for $k\geq 48$ the group $B(2,2^{k})$ is infinite \cite{Iv}. I. G. Lys{\"e}nok found that $B(2,2^{k})$ is infinite for $k\geq 13$ \cite{Lys}.
It is still an open problem though whether, for example, $B(2,5)$, $B(2,7)$ or $B(2,8)$ are infinite or finite, however if $B(2,5)$ is finite then it has $5^{34}$ elements. M. Vaughan-Lee speculated over 20 years ago (personal information) that when computers are strong enough it can be decided by them whther $B(2,5)$ is infinite or finite. Clearly the time did not come yet (as of October 2024).} \\
The simplest definition of Burnside group of a link, $B_n(L)$, is via the fundamental group of the double branched cover of $S^3$ branched along a link $L$, $\pi_1(M^{(2)}_L)$ that is $B_n(L)=\pi_1(M^{(2)}_L)/(w^n).$
To make this definition useful we interpreted $\pi_1(M^{(2)}_L)$ as a reduced core group of the diagram where core group of the diagram, $\mathrm{Core}(D)$ has very simple 
diagrammatic definition: 
\begin{definition}
 The group, $\mathrm{Core}(L)$ of the link diagram $L$ can be computed from any diagram $D$ of $L$ as follows\footnote{The story is very instructive: around 1976, to Oleg Viro came student and told him that he found new link invariant defined similarly but differently than the fundamental group of link complement. Instead of $c=b^{-1}ab$, used in Wirtinger presentation of the fundamental group of a link complement, he used the relation $c=ba^{-1}b$. The student, named Victor Kobelsky, told Oleg that he checked invariance by Reidemeister move. The prevailing ideology as promoted by Ralph Fox was that knot invariants are some known invariants of algebraic topology related to homotopy or homology groups.
 And, in fact, Oleg thought hard and showed that Victor's invariant is in fact the fundamental group of the double branched cover of $S^3$ branched along the link. Victor never published his funding. A dozen of years later Wada published the result in Topology (see \cite{Wad,Prz5}). As Oleg summarized: {\it take ideas of your students very seriously!}}:
 \begin{enumerate}
     \item[(i)] Generators of the group are indexed by arcs of the diagram (arc is a part of a diagram from a tunnel to the next tunnel);
     \item[(ii)] Relations are given by the crossings of the diagram and they are of the form $ba^{-1}bc^{-1}$, where $a,b,$ and $c$ are generators associated to arcs of the crossing as in Figure \ref{L+Core}.
\end{enumerate} 
\end{definition}
The relation between the core group and $\pi_1(M^{(2)}_D)$ is as follows:
\begin{equation*}
\mathrm{Core}(D)=Z*\pi_1(M^{(2)}_D).
\end{equation*}
\begin{figure}
\centering
\scalebox{.61}{\includegraphics{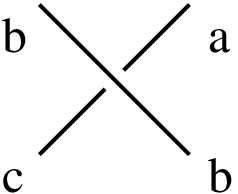}}
\caption{Core group relation at a crossing  $c=ba^{-1}b$}
\label{L+Core}
\end{figure}
%Figure 17

\begin{figure}
\centering
\scalebox{.29}{\includegraphics{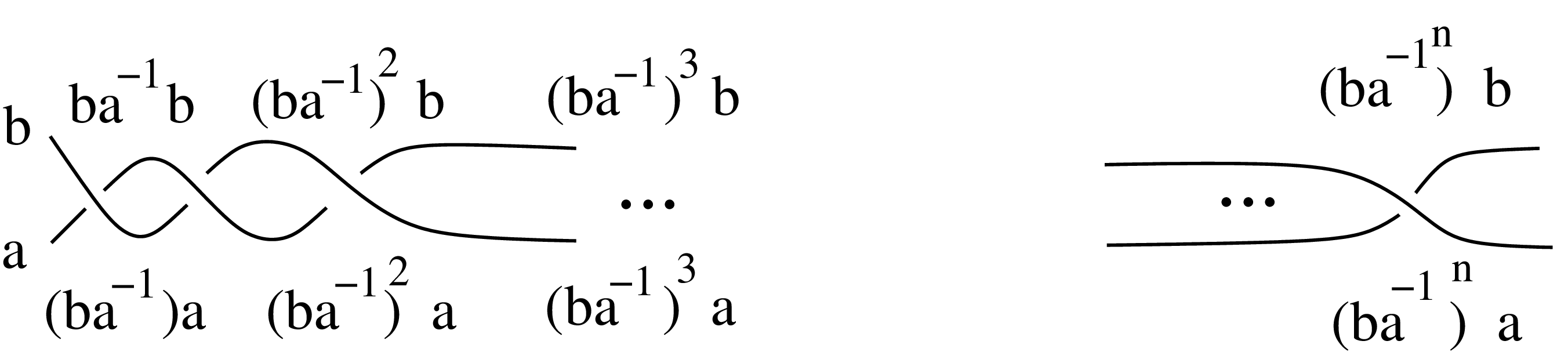}}
\caption{Core group relations applied to $n$-move; $b$ goes to $(ba^{-1})^nb$ and $a$ goes to $(ba^{-1})^na$}
\label{Core-n-move}
\end{figure}
%Figure 18

%\centerline{{\psfig{figure=L+Core.pdf,height=3.1cm}}}
A simple, but breakthrough, observation in my work with Mietek was that the $n$th Burnside group is preserved by $n$-moves (see Figure \ref{Core-n-move} which essentialy contains a ``Proof Without Words"). We checked in \cite{DaPr1} that the Chen link has different the third Burnside group from third Burnside groups of all trivial links ($|B_3(Chen)|=3^{10}$). What was helping us at initial stages of our work was that Mike Newman put the presentation of $B(4,3)$ on web (in package Magma); \cite{New}.\footnote{I gave talk about our work at University of Maryland topology seminar (April 1, 2002) with S.P.~Novikov (1938-2024) in audience. He become sentimental recalling his father P.S.~Novikov work on Burnside problem when he was a young student. He suggested  publishing our findings in Proceedings of the National Academy of Science, which we did in \cite{DaPr2}.}

I am not sure when, instead of core group, I started to consider an effect of $n$ move on a quandle however 
it is expanded in my paper with Mietek \cite{DaPr3} but it is also crucial in my first paper with Maciek and it is even embedded in the title ``Burnside Kei" \cite{NiPr1} (see Subsection \ref{Maciek}).

\subsection{Maciej Niebrzydowski; quandles and their homology}\label{Maciek}
In the Summer of 2003, Maciej (Maciek) Niebrzydowski become my PhD student. Similarly to Mietek he was recommended by Witek Rosicki. He brought with him from 
Poland a deep knowledge (end enthusiasm) of quandles, gave several seminar talks, and convinced me to think about distributive structures and their homology. Our first paper, \cite{NiPr1} was motivated by an analysis of $n$-moves and their effect on quandles. Even  the title of the paper ``Burnside Kei" shows an influence of my previous work with Mietek D{\c a}bkowski. \ 
In the paper we analyze, in particular, action of $n$-moves on Keis (involutive quandles) and general quandles, compare Figure \ref{Quandle-n-move}. Formula for $w_n$ from Figure \ref{Quandle-n-move} can be easily proven by induction on $n$ (by definition $w_{-1}=a, w_0=b$, $w_1=a*b$ and recursively $w_n=w_{n-2}*w_{n-1}$). The inductive step is as follows (here $w_n^{(a)}$ denotes the word obtained from $w_n$ by exchange $a\leftrightarrow b$):
\begin{eqnarray*}
w_n&\stackrel{def}{=}& w_{n-2}*w_{n-1} \stackrel{ind}{=} (w_{n-3}^{(a)}*b)*(w_{n-2}^{(a)}*b) \stackrel{distr}{=}(w_{n-3}^{(a)}*w_{n-2}^{(a)})*b\stackrel{def}{=} w_{n-1}^{(a)}*b, 
\end{eqnarray*}
as needed.

\begin{figure}[ht]
\centering
\scalebox{.28}{\includegraphics{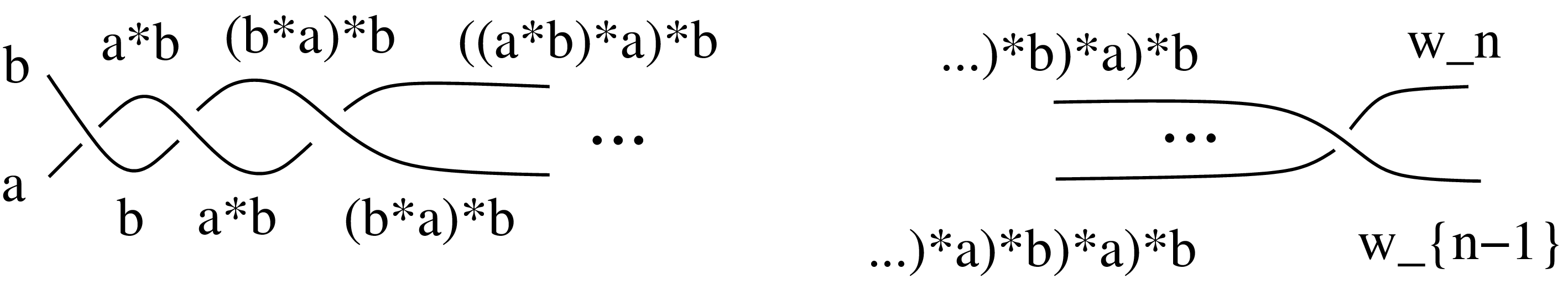}}
\caption{Quandle coloring on $n$-move; $w_n$ denotes the word of length $n+1$ in which $a$ and $b$ alternate and the last letter is $b$, except that $w_0=a$. Note that
$w_2\stackrel{def}{=}b*(a*b)\stackrel{Idem}{=} (b*b)*(a*b) \stackrel{distr}{=} (b*a)*b$}
\label{Quandle-n-move}
\end{figure}

\begin{example} Consider the case of $n=3$. We have $w_2=(b*a)*b$ and $w_3=((a*b)*a)*b$ 
and after closing the 3-move to the trefoil knot we get a presentation of the fundamental qundle of the trefoil:
$$\{a,b\ | \ w_2=a, w_3=b \} = \{a,b\ | \ (b*a)*b=a, (a*b)*a=b \} = \{a,b\ | \ b*a=a\bar *b, a*b=b\bar * a \}.$$
This fundamental quandle is studied in detail in \cite{NiPr3}.\\
Notice that if we assume $*=\bar *$ that is we deal with fundamental kei (involutive quandle), then we get $$\{a,b\ | \ a*b=b*a \}$$ the abelian quandle of two generators, that is the dihedral quandle $R_3=\mathrm{kei}(\mathbb{Z}_3)$.
\end{example}

One of our important contribution was ``Delayed Fibonacci" Conjecture on odd Dihedral quandles \cite{NiPr2,NiPr4}.
\begin{conjecture}
Let $R_k$ be a dihedral quandle, that is the group $\mathbb{Z}_k$ with quandle operation $a*b=2b-a$. If $k$ is odd then it quandle homology satisfies
\begin{equation*}
H_n(R_k)= \mathbb{Z}_k^{f_n},\mbox{for } n>1,
\end{equation*} 
where $f_n$ is the delay Fibonacci sequence, i.e.,
\begin{equation*}
f_1 = f_2 = 0,\, f_3 = 1,\,\,\text{and}\,\, f_n=f_{n-1}+f_{n-3},\,\,\text{for}\,\,n\geq 4.
\end{equation*}
\end{conjecture}
The conjecture was proved in very powerful papers by Clauwens \cite{Cla} and Nosaka \cite{Nos}. Another important result, we obtained with Maciej, was about second homology of odd Takasaki quandle \cite{NiPr5}\footnote{The last calculation in \cite{NiPr5} was made at Oakland airport: in March 2010 after MSRI meeting (Homology theories of knots and links); Misha Khovanov kindly brought me to the airport and I had several hour before my red eye flight so I was able to finish our paper with Maciej.}  In 2018 we generalized 
with Nosaka and my PhD students the result about second homology of odd Takasaki quandle to any quasigroup Alexander quandle \cite{BIMNP}.
\begin{theorem}\label{Shur}
 Let $(X,*)$ be an Alexander quandle ($x*y=tx+(1-t)y$) with $(1-t)$ invertible (equivalently our Alexander quandle is a quasigroup). Then, 
 there is an isomorphism 
 which express the second homology of Alexander quandles in terms of exterior algebras:
 $$H_2^Q(X,\mathbb Z)\cong \frac{X\bigwedge_{\mathbb Z}X}{\{x\wedge y - tx\wedge ty\}_{x,y\in X}}.$$

\end{theorem}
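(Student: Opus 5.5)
We plan to reduce the computation of $H_2^Q(X,\mathbb Z)$ to a computation of $2$-cocycles and then dualize. By the universal coefficient theorem, and because $H_1^Q(X)\cong\mathbb Z$ is free for a connected quandle, it is enough to compute $H^2_Q(X;A)$ for an arbitrary abelian group $A$. We will show that it is naturally isomorphic to $\mathrm{Hom}\big(X\wedge_{\mathbb Z}X/\langle x\wedge y-tx\wedge ty\rangle,A\big)$. A quasigroup Alexander quandle is connected, since $1-t$ is invertible and therefore $y\mapsto x*y$ is a bijection. So $H^2_Q(X;A)$ classifies abelian extensions of $X$ by $A$.

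Our main tool is the universal property of the second homology. For a connected quandle, $H_2^Q(X)$ is the kernel (the Schur multiplier-type group) of the map $\mathrm{As}(X)\to\mathrm{Inn}(X)$, modulo the infinite cyclic part. Equivalently, it is $\mathrm{Ker}\big(\mathrm{Stab}_{\mathrm{As}}(x_0)^{ab}\to\ldots\big)$, following Eisermann's description, where $H_2^Q(X)\cong (\mathrm{Stab}(x_0)\cap[\mathrm{As},\mathrm{As}])^{ab}$-type data. A more hands-on route is the one we intend to use: write down the candidate map
$$\Phi: C_2^Q(X)\to X\wedge X/\langle x\wedge y-tx\wedge ty\rangle,\qquad (x,y)\mapsto x\wedge y .$$
We then check three things. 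First, $\Phi$ vanishes on degenerate chains, since $x\wedge x=0$. Second, it vanishes on boundaries $\partial(x,y,z)=(x,z)-(x*y,z)-(x,y)+(x*z,y*z)$. Third, it is surjective, because the generators $x\wedge y$ are hit directly.

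The boundary check is a linear computation. Put $x*y=tx+(1-t)y$ and expand using bilinearity:
$$x\wedge z-(tx+(1-t)y)\wedge z-x\wedge y+(tx+(1-t)z)\wedge(ty+(1-t)z).$$
Expanding and using $z\wedge z=0$ leaves exactly a term $tx\wedge ty-x\wedge y$ plus terms that cancel. This cancellation should occur precisely because of the relation we impose.

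The remaining task is injectivity. We will construct an inverse, or show that every $2$-cycle is homologous to a combination that $\Phi$ detects faithfully. Here we would use the quasigroup property. Since $1-t$ is invertible, every pair can be normalized via $(x,y)\sim(x-y,0)$ modulo boundaries, using the translation action: translations $x\mapsto x+c$ are quandle automorphisms that act trivially on homology for connected $X$. Cycles then become $\mathbb Z$-combinations of $(u,0)$, and the cycle and boundary relations among these should turn into biadditivity and antisymmetry in $\wedge$ together with the $t$-invariance. Concretely, we would show that the relation submodule generated by boundaries, restricted to this normalized form, is exactly the kernel of $\mathbb Z[X\times X]\to X\wedge X/\langle\cdot\rangle$. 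This means deriving bilinearity, $(x+x',y)\equiv(x,y)+(x',y)$, from boundaries of suitably chosen triples, where the triples are solved for using $(1-t)^{-1}$.

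We expect this last step, extracting additivity in each variable from the quandle boundary relations, to be the main obstacle. We would attack it by first proving the rank-one case $X=\mathbb Z_p$ to see the pattern. Alternatively, we would pass to cohomology and show that any normalized $2$-cocycle $\phi$ with $\phi(x,0)$-type normalization is biadditive, alternating and $t$-invariant, reversing the boundary calculation above.
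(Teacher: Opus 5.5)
The paper gives no proof of this theorem; it only quotes \cite{BIMNP}. So I judge your proposal on its own terms. Your cocycle check is right, up to bookkeeping. With $\omega(u,v)=u\wedge v-tu\wedge tv$ one gets $\Phi(\partial(x,y,z))=-\big(\omega(x,y)-\omega(x,z)+\omega(y,z)\big)$, which has three correction terms, not one. So $(x,y)\mapsto x\wedge y$ is a quandle $2$-cocycle with values in $T=X\wedge X/\langle x\wedge y-tx\wedge ty\rangle$, and it induces $\Phi_*:H_2^Q(X)\to T$. From there on, however, the argument treats $2$-chains as if they were homology classes. We have $\partial(x,y)=[x]-[x*y]$, and $x*y=x$ forces $y=x$ when $1-t$ is invertible. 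Hence no nondegenerate generator $(x,y)$ is a cycle. ``The generators $x\wedge y$ are hit directly'' therefore only shows that $\Phi$ is onto on $C_2$, which says nothing about $H_2^Q=Z_2/B_2$. Surjectivity of $\Phi_*$ is true, but it needs honest cycles. Take a closed path $x_0\to x_1\to\cdots\to x_n=x_0$ with $x_i=x_{i-1}*y_i$. This forces $y_i=s(x_i-tx_{i-1})$, where $s=(1-t)^{-1}$, and the cycle $\sum_i(x_{i-1},y_i)$ has $\Phi=\sum_i x_{i-1}\wedge s(x_i-x_{i-1})$. The $2$-gon $0\to a\to 0$ gives $-a\wedge sa$, and the triangle $0\to a\to b\to 0$ gives $a\wedge sb-a\wedge sa-b\wedge sb$. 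So every $a\wedge sb$ lies in the image.

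The injectivity step is the genuine gap, and the proposed mechanism cannot work.

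\begin{itemize}
\item Translations are indeed inner: $x\mapsto (x\,\bar*\,0)*y$ is translation by $(1-t)y$. So they fix homology classes of \emph{cycles}. But $\partial\big((x,y)-(x-y,0)\big)=[x]-[x*y]-[x-y]+[t(x-y)]$ is a nonzero formal combination in $\mathbb{Z}X$. Hence $(x,y)\sim(x-y,0)$ modulo $B_2$ is false.
\item Pushing cycles onto the pairs $(u,0)$ is also impossible. The only cycles supported there are $t$-orbit sums $\sum_k(t^ku,0)$, which $\Phi$ kills. This would force $\Phi_*=0$, contradicting the surjectivity just shown; for instance $X=\mathbb{Z}_3^2$, $t=-1$, where $T\cong\mathbb{Z}_3$.
\item Likewise $(x+x',y)-(x,y)-(x',y)$ is not a cycle, so no bilinearity relation can hold modulo $B_2$. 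In fact $C_2/B_2\cong H_2^Q(X)\oplus B_1$ with $B_1\subset\mathbb{Z}X$ free of rank $|X|-1$. Moreover $\ker\Phi$ contains non-cycles such as $(x,0)$.
\item Your cohomological variant is false as stated. Take $\delta f$ with $f=\mathbf{1}_{\{0\}}$: it is a normalized cocycle that is not biadditive. Its corrected form, ``cohomologous to a biadditive, alternating, $t$-invariant cocycle'', is just the theorem restated.
\end{itemize}

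The missing idea is the one you mention and then drop. For connected $X$, $H_2^Q(X)\cong\big(\mathrm{Stab}_{\mathrm{As}(X)}(x_0)\cap E\big)^{ab}$ with $E=[\mathrm{As}(X),\mathrm{As}(X)]$. Here $E$ acts on $X$ through translations, so this stabilizer is the kernel $K$ of $E\to X$, which is central in $\mathrm{As}(X)$. The commutator pairing $(a,b)\mapsto[\tilde a,\tilde b]$ of this central extension of the abelian group $X$ is automatically biadditive and alternating. It is also $t$-invariant, because conjugation by $e_0$ induces $t$ on $X$ and fixes $K$. So it yields $\kappa:T\to K$. This is where the relations of $T$ are produced: in a group, not among $2$-chains. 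Two checks then remain:
\begin{itemize}
\item $K=[E,E]$. Reidemeister--Schreier gives $E^{ab}\cong X$ because $1-t$ is invertible.
\item $\Phi_*\circ\kappa=\pm\mathrm{id}_T$. Lift the commutator to a closed path at $0$ and use the loop formula above.
\end{itemize}
Together these give $H_2^Q(X)\cong K\cong T$. This kind of group-theoretic, Schur-multiplier input is what the cited result \cite{BIMNP} rests on, and nothing in your proposal replaces it.
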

\ \\ 

As long as Maciej was in Lafayette (at University of Louisiana), we worked systematically meeting twice a year (as much as my teaching was allowing). Our last paper is concerning entropic condition and homology of entropic magmas \cite{NiPr6}. This project should be continued as we have a few candidates for ``good" homology of entropic magma. We are led in this by analysis of extentions of entropic magmas. This program should be performed for many other algebraic structures, e.g. identities of Bol-Moufang type \cite{PhVo}.\footnote{Petr Vojtechovsky discussed these identity at one of his talks and 
I was trying then to see homology theory for many of them, but it should be done systematically. {\color{red}Added for e-print: Ania Zamojska-Dzienio visited us at GWU in March-April 2025 and with her and my students 
we advanced homology theory of Bol-Moufang type; still, a lot remains to be done, \cite{CCGOPZ}}.}

\subsection{Carter-Kamada-Saito book}
In my study of distributive structures, after the initial push by Maciek,
the important ingredient was the book by Carter, Kamada, and Saito \cite{CKS} which I studied carefully. It is worth to cite here Matveev from MathSciNet:\\
{\it The last chapter is devoted to a detailed discussion of state sum type invariants derived from quandles (algebraic objects that had been independently introduced in 1982 by D. Joyce \cite{Joy} and the reviewer \cite{Mat}... The authors of the book are among the main founders of this theory and have contributed a great deal to its development. Therefore it is not surprising that this chapter contains many new results. I conclude with the remark that the book may serve as a good introduction for a more or less experienced reader into the beautiful world of knotted surfaces.}
Indeed, one of my projects, which I value a lot, was to generalize results on knotted surfaces into analysis of embeddings of $n$ dimensional manifolds in $n+2$ dimensional manifolds. One of the tools, we used with Witek Rosicki were Roseman results (I have learned from Charlie Frohman that Dennis Roseman work is not yet published so I published it in 3 instalments learning about Roseman moves on the way \cite{Ros1,Ros2,Ros3,PrRo2}).

\subsection{Gda\'{n}sk University, Knots in Poland III, 2010, B{\c e}dlewo} In the summer of 2010 just before Knots in Poland III conference, I visited Witek at University of Gda\'nsk. 
Witek was asking me to find some good problems for his PhD students.
I was looking for something elementary but unusual. The first very useful, most likely not very new, concept was a monoid of binary operations.

%\subsection{Homology of distributive structures.}
Recall that a set $X$ with a binary operation $*: X\times X \to X$ is called a magma. If $*$ is associative $(a*b)*c=a*(b*c)$ then $(X,*)$ is a semigroup and if additionally there is an identity element, $1$ ($a*1=a=1*a$) then $(X,*,1)$ is a monoid.

\begin{definition}\label{Bin1}
For a given set $X$ the set of all binary operations on $X$ is denoted by $Bin(X)$.\footnote{I realized importance of this object in summer 2010 during my visit in Gda\'nsk while trying to understand the significance of rack and quandle homology. I denoted it then $Gd(X)$, see \cite{Prz9}.}
\end{definition}
\begin{proposition}\label{Bin2}
The set $Bin(X)$ forms a monoid with composition $*_1*_2$ of binary operations defined by $a(*_1*_2)b= (a*_1b)*_2b$ and the identity element 
$*_0$ defined by $a*_0b=a$.
\end{proposition}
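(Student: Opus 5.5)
The plan is to verify the three monoid axioms directly from the definition $a(*_1*_2)b=(a*_1b)*_2b$. The guiding observation is that, for fixed $b$, every operation $*\in Bin(X)$ gives a unary map $R_b^{*}:X\to X$, $R_b^{*}(a)=a*b$. A binary operation is exactly the family $\{R_b^{*}\}_{b\in X}$, and the definition of the product says
$$R_b^{*_1*_2}=R_b^{*_2}\circ R_b^{*_1}.$$
So $Bin(X)$ should be identified with the set of functions $X\to X^X$, $b\mapsto R_b^{*}$, with pointwise multiplication in the opposite of the monoid $(X^X,\circ)$. That structure is a monoid, being a product of copies of a monoid.

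I will also give the direct elementwise check, in this order.

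First, closure. For $*_1,*_2\in Bin(X)$, the rule $(a,b)\mapsto (a*_1b)*_2b$ is a well-defined map $X\times X\to X$, so $*_1*_2\in Bin(X)$.

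Second, associativity. For $*_1,*_2,*_3$ and all $a,b$, unwind both sides:
$$a((*_1*_2)*_3)b=(a(*_1*_2)b)*_3b=((a*_1b)*_2b)*_3b,$$
$$a(*_1(*_2*_3))b=(a*_1b)(*_2*_3)b=((a*_1b)*_2b)*_3b.$$
The two expressions agree. The only care needed is that the second argument $b$ is the same throughout, which is exactly why the composition is associative even though the individual operations need not be.

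Third, the identity. We have $a(*_0*)b=(a*_0b)*b=a*b$ and $a(**_0)b=(a*b)*_0b=a*b$, so $*_0$ is a two-sided unit.

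There is no real obstacle here. The only point deserving attention is the order convention: $*_1$ is applied first, so the product corresponds to $R^{*_2}_b\circ R^{*_1}_b$. That convention reverses function composition, but it does not affect associativity. I will remark on it only to avoid confusion in the later use of $Bin(X)$, for instance when asking which operations are invertible, such as $*$ and $\bar *$ in a rack, where $*\bar *=*_0=\bar * *$.
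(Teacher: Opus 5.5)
Your proposal is correct and matches the paper's proof: the paper verifies the two-sided unit by the same computation $(a*b)*_0b=a*b=(a*_0b)*b$, and it gets associativity by unwinding both sides to $((a*_1b)*_2b)*_3b$, attributing this to associativity of composition of functions, which your $R_b^{*}$ remark makes precise. Your explicit closure check and the identification of $Bin(X)$ with a product of copies of $(X^X,\circ)^{\mathrm{op}}$ are harmless additions.
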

\begin{proof} We check quickly that 
\begin{equation*}
a(**_0)b= (a*b)*_0b=a*b= (a*_0b)*b=a(*_0*)b,
\end{equation*}
thus $**_0=*=*_0*$. Associativity follows from associativity of composition of functions, we have:
\begin{equation*}
a(*_1*_2)*_3b=(a*_1*_2b)*_3b=((a*_1b)*_2b)*_3b=(a*_1b)*_2*_3b=a*_1(*_2*_3)b.  
\end{equation*}
Thus $(*_1*_2)*_3= *_1(*_2*_3)$, as needed.
\end{proof}

The monoid $Bin(X)$ was probably not new\footnote{The diagonal map $\Delta :X\rightarrow X\times X$ given by $\Delta(a)=a\times a$ can be regarded as a very special comultiplication and generalized (e.g., as used in some unpublished work of Berfriend Fauser).}. A binary operation $\ast $ on a set $X$ is (right) self-distributive if 
\begin{equation*}
(x\ast y)\ast z = (x\ast z)\ast (y\ast z)
\end{equation*}
for every $x,y,z\in X$. A magma with a right self-distributive operation is called a shelf (racks and quandles are examples of shelves). Fenn, Rourke, and Sanderson developed homology theory for racks.

A multi-shelf is a set $X$ with a collection of binary operations $*_i$, $i\in I$, that are mutually right self-distributive, i.e., $(x*_iy)*_jz=(x*_jz)*_i(y*_jz)$ for all $i,j\in I$.

%\subsection{Gdansk Lectures} 
From Fall of 2010 I started teaching knot theory and its ramifications at University of Gda\'nsk. I was convinced to do so by my Gda\'nsk friend Witold Rosicki. 
Notes made by Micha{\l} Jab{\l}onowski (Witek's student), were published by Gda\'nsk University press \cite{Prz9} (in Polish). My life in Gda\'nsk was relaxing and productive thanks to  Witek (my formal association with University of Gda\'nsk covers the period 2010-2025).

%\section{From Fox coloring to Yang-Baxter coloring}

\section{Short introduction to homology theory} 

\subsection{Homology from chain complexes}
Let $k$ be a commutative ring with identity (e.g. $k= \mathbb Z$). Recall that 
a chain complex 
$\mathcal C = \{C_n,\partial_n \}_{n\in \mathbb Z}$ 
is a sequence of 
$k$-modules (abelian groups for $k= \mathbb Z$) $C_n$, and homomorphisms $\partial_n: C_n \to C_{n-1}$ such that, $\partial_{n} \partial_{n+1} =0$ for any  $n$ (we write succinctly $\partial^2 = 0$). One defines $n$th homology of a chain complex as
\begin{equation*}
H_{n}(C)=\ker \left( \partial _{n}\right) /\mathrm{im}\left( \partial
_{n+1}\right).
\end{equation*}

\subsection{Chain complex from abstract simplicial complex}

\begin{definition}\label{ASC}
An abstract simplicial complex $\mathcal{K}=(V,S)$ consists of a set $V=V(\mathcal{K})$ of vertices and a set $S = S(\mathcal{K})$ of finite (nonempty) subsets of $V$
called simplexes (or faces) of $\mathcal{K}$ which is closed under inclusion (except for $\emptyset $) and any vertex is a ($0$-dimensional) simplex. A subset $\left\{ v_{i_{0}},v_{i_{1}},\ldots ,v_{i_{n}}\right\}\in S$ of $\left(
n+1\right) $ different vertices is called an $n$-dimensional simplex ($n$-simplex) and the dimension $\dim (\mathcal{K})$ of $\mathcal{K}$ is the supremum of dimensions of its simplices (so $\dim (\mathcal{K})$ can also be $\infty $).
\end{definition}
To every abstract simplicial complex with ordered vertices we associate the chain complex as follows. Let $X_n$ be the set of simplices of dimension $n$. We define $C_n=kX_n$. The map $\partial_n: C_n \to C_{n-1}$ is defined on bases $X_n$ by 
\begin{equation*}
\partial_{n}(v_{0},\,v_{1},\,\ldots,v_{n})=\sum_{i=0}^{n}(-1)^{i}(v_{0},\,\ldots
,v_{i-1},\,v_{i+1},\,\ldots ,v_{n})
\end{equation*}
where $(v_0,\,v_1,\,\ldots,v_n)\in X_n$ and $v_0 < v_1 < \cdots < v_n$ in chosen ordering of $V$, and, as usually, extended $k$-linearly to the map on $C_{n}$.
Homology of $\mathcal K$ is defined to be homology of this chain complex.

In the next subsection we generalize abstract simplicial complex to presimplicial set.
%\subsection{Geometric realization of a presimplicial set}
A presimplicial set keeps an instruction how to construct a topological space (CW complex) build of simplexes (A.~Hatcher \cite{Hat} calls it $\Delta$-set).\footnote{
During my undergraduate studies at the University of Warsaw, I was rather well educated in homological algebra (my master degree advisors in 1977 were algebraic topologists Agnieszka Bojanowska and Stefan Jackowski).
However, after 28 years while I was discovering connection between Khovanov homology and Hochschild homology \cite{Prz7}, I had to learn it again. I bought Loday book \cite{Lod} and I was carrying it with me all the time. I forgot that in the Spring of 1984 Loday gave a course on cyclic and Hochschild homology at University of Warsaw and I was taking notes. What is written below, and applied to quandle homology was motivated by Loday's book. A more comprehensive account of my thinking and findings is published in \cite{Prz8}. I wrote there: {\it This paper is a summary of numerous talks I gave last year: from my Summer 2010 talk at Knots in Poland to a seminar at Warsaw Technical University in June 2011...} It was after my talk at the Warsaw University of Technology when I was suggested to publish my findings in \emph{Demonstratio Mathematicae}. The follow up paper appeared in \cite{Prz11} (see also my Gda\'nsk book \cite{Prz9} where some results were published in Polish before journal publications in English.) {\color{red} Added for e-print: It was Basia Roszkowska who invited me to give a talk at} {\color{red} the Warsaw University of Technology. Basia did her Master Degree at Warsaw University with Stefan Jackowski as an advisor.}}

\subsection{Presimplicial category (set, module); geometric realization}\ 

For completeness we recall here the notion of a presimplicial category (in the case of sets, or $k$-modules) and the geometric realization of a presimplicial set.

\begin{definition}\cite{E-Z,Lod,Prz8}\label{Presimplicial module}
\begin{enumerate}
\item[(1)]  A presimplicial set $\mathcal X = (X_n,\,d_i)$  is a collection of sets $X_{n}$, $n \geq 0$, together with maps called face maps or face operators,
\begin{equation*}
d_{i,n}=d_{i}: X_{n} \to X_{n-1},\,i=0,\ldots,n
\end{equation*}
such that
\begin{equation*}
d_{i}d_{j} = d_{j-1}d_{i} \mbox{ for } 0 \leq i<j \leq n.
\end{equation*}
\item[(2)] A presimplicial module $\mathcal C$ is a collection of $k$-modules $C_{n}$, $n \geq 0$, together with maps called face maps or face operators,
\begin{equation*}
d_{i}: C_{n} \to C_{n-1},\,i=0,\ldots,n   
\end{equation*}
such that,
\begin{equation*}
d_{i}d_{j} = d_{j-1}d_{i} \mbox{ for } 0 \leq i<j \leq n.
\end{equation*}
\item[(3)] If $(X_n,\,d_i)$ is a presimplicial set than we have the naturally associated presimplicial module by choosing $C_n=kX_n$ (the free
k-module with basis $X_n$) and $d_{i,n}$ being extensions of $d_{i,n}: X_n \to X_{n-1}$ to the map $C_n\to C_{n-1}$.
\end{enumerate}
\end{definition}
Recall that a presimplicial module induces the chain complex $(C_n,\,\partial_n)$, where the boundary operator is defined as $\partial_n=\sum_{i=0}^n(-1)^id_i$. This chain complex leads to homology (and cohomology) theory of the presimplicial module.

\begin{definition}
A presimplicial set allows standard geometric
realization: We think of $X_{n}$ as labels for simplexes and face maps $%
d_{i} $ as gluing instructions. For a more precise definition we need a \textquotedblleft model\textquotedblright\ simplex
\begin{equation*}
\Delta ^{n}= \{ (x_{0},\ldots ,x_{n})\in \mathbb{R}^{n+1}\mid \sum_{i=0}^{n}x_{i}=1,\,x_{i}\geq 0\}
\end{equation*}
and embedding maps
\begin{equation*}
d^{i,n-1}= d^{i}:\Delta ^{n-1}\rightarrow \Delta^{n}
\end{equation*}
given by 
\begin{equation*}
d^{i}\left( x_{0},\ldots ,x_{i-1}\right) =\left( x_{0},\ldots,x_{i-1},0,x_{i},\ldots,x_{n-1}\right) .
\end{equation*}
Recall, that a geometric realization $\left\vert \mathcal{X}\right\vert $ of $\mathcal{X}$ is a CW complex defined by
\begin{equation*}
\left\vert \mathcal{X}\right\vert =\bigcup\limits_{n\geq 0}\left(
X_{n}\times \Delta ^{n}\right) /\sim ,
\end{equation*}
where relation $\sim $ is given by%
\begin{equation*}
\left( d_{i}(a),t\right) \sim \left( a,d^{i}(t)\right) ,
\end{equation*}
for $a\in X_{n},$ $t\in \Delta ^{n-1}$.
\end{definition}
Our geometric realization $\left\vert \mathcal{X}\right\vert $ is a CW-complex, however in fact, it has a much simpler structure as it is glued from simplexes, i.e., it is $\Delta $-complex (see Hatcher \cite{Hat}, Section 2.1). A basic first example of a presimplicial set and its geometric realization is given by abstract and geometric simplicial complexes (see Definition~\ref{ASC}). 

\begin{example}\label{Example 13.2}
 Let ${\mathcal X}$ be an abstract simplicial complex $X=(V,S)$. If we order its vertices then
${\mathcal X}$ is a presimplicial set with $X_n$ being the set of $n$ simplexes
of ${\mathcal X}$ and face maps are defined on each simplex in a standard way 
\begin{equation*}
d_i(x_0,\ldots,x_n)=
(x_0,\ldots,x_{i-1},x_{i+1},\ldots,x_n).    
\end{equation*}
The geometric realization of the above presimplicial set is a classical geometric simplicial complex.
\end{example}
The other two important examples of a presimplicial set are
related to semigroup (associative) homology and (one term) distributive homology, respectively:
\begin{definition}\label{preass-distr}
Let $(X,*)$ be a magma and let $X_n=X^{n+1}$. We define two presimplicial sets $\mathcal X^{assoc}$ and $\mathcal X^{distr}$ with $d_{i,n}:X_n \to X_{n-1}$ depending on $*$ being associative or right self-distributive, respectively.
\begin{enumerate}
\item[(1)] Let $(X,*)$ be a semi-group and 
\begin{equation*}
d_i^{ass}(x_0,x_1,\ldots,x_n)=(x_0,\ldots,x_{i-2},x_{i-1}*x_i,x_{i+1},\ldots,x_n)
\end{equation*}
(in particular $d_0^{ass}(x_0,x_1,\ldots,x_n)=(x_1,\ldots,x_n)$). Then $\mathcal X^{assoc}$ is a presimplicial set.
%\footnote{One checks the condition $d_id_j=d_{j-1}d_i$ for $i<j$ directly. The interesting case is for 
%$j=i+1$; then we have: $d_id_{i+1}((x_0,x_1,\ldots,x_n)=d_i(x_0,\ldots,x_{i-1},x_i*x_{i+1},x_{i+2},\ldots,x_n)=(x_0,\ldots,x_{i-2},x_{i-1}*(x_i*x_{i+1}),x_{i+2},\ldots,x_n)$. Also $d_id_i(x_0,x_1,\ldots,x_n)=d_i(x_0,\ldots,x_{i-2},x_{i-1}*x_i,x_{i+1},\ldots,x_n)= (x_0,\ldots,x_{i-2},(x_{i-1}*x_i)*x_{i+1},x_{i+2},\ldots,x_n)$ so the same by associativity. }
\item[(2)] Let $(X,*)$ be a shelf, that is $(a*b)*c=(a*c)*(b*c)$ and
\begin{equation*}
d_i^{distr}(x_0,x_1,\ldots,x_n)=(x_0*x_i,\ldots,x_{i-1}*x_i,x_{i+1},x_{i+2},\ldots,x_n)
\end{equation*}
(in particular $d_0^{distr}(x_0,x_1,\ldots,x_n)=(x_1,\ldots,x_n)$). 
Then $\mathcal X^{distr}$ is a presimplicial set.%\footnote{One checks the condition $d_id_j=d_{j-1}d_i$ for $i<j$ directly. We have $d_id_{j}((x_0,x_1,\ldots,x_n)=d_i(x_0*x_j,\ldots,x_i*x_j,\ldots,x_{j-1}*x_j,x_{j+1},\ldots,x_n)=((x_0*x_j)*(x_i*x_j),...,(x_{i-1}*x_j)*(x_i*x_j),x_{i+1}*x_j,...,x_{j-1}*x_j,x_{j+1},...,x_n)$. Also $d_{j-1}d_i(x_0,x_1,...,x_n)=d_{j-1}(x_0*x_i,...,x_{i-1}*x_i,x_{i+1},...,x_n)= ((x_0*x_i)*x_j,...,(x_{i-1}*x_i)*x_j,x_{i+1}*x_j,...,x_{j-1}*x_j,x_{j+1},...,x_n)$ so the same by distributivity. }
\item[(3)] The first (1) presimplicial set leads to semigroup homology and the second (2) leads to (one term) distributive homology (defined originally in \cite{Prz8}).
\end{enumerate}
\end{definition}

\begin{remark}
\label{rem:Cond_1}
Condition
\begin{equation*}
d_{i}d_{j}=d_{j-1}d_{i}
\end{equation*}%
for $i<j$ in (1) of the Definition~\ref{preass-distr} can be verified directly. The interesting case is when $j=i+1$, i.e.,
\begin{eqnarray*}
d_{i}d_{i+1}\left( x_{0},x_{1},\ldots ,x_{n}\right) &=&d_{i}(x_{0},\ldots
,x_{i-1},x_{i}\ast x_{i+1},x_{i+2},\ldots ,x_{n}) \\
&=&(x_{0},\ldots ,x_{i-2},x_{i-1}\ast (x_{i}\ast x_{i+1}),x_{i+2},\ldots
,x_{n}).
\end{eqnarray*}
and also 
\begin{eqnarray*}
d_{i}d_{i}(x_{0},x_{1},\ldots ,x_{n}) &=&d_{i}\left( x_{0},\ldots
,x_{i-2},x_{i-1}\ast x_{i},x_{i+1},\ldots ,x_{n}\right) \\
&=&(x_{0},\ldots ,x_{i-2},(x_{i-1}\ast x_{i})\ast x_{i+1},x_{i+2},\ldots
,x_{n})
\end{eqnarray*}
so, by associativity $d_{i}d_{i+1}=d_{i}d_{i}$.

Condition (2) in (ii) of the Definition~\ref{preass-distr}, i.e., 
\begin{equation*}
d_{i}d_{j}=d_{j-1}d_{i}\text{ for }i<j
\end{equation*}%
can also be verified easily. In particular 
\begin{equation*}
d_{i}d_{j}(x_{0},x_{1},\ldots ,x_{n})=d_{i}(x_{0}\ast x_{j},\ldots
,x_{i}\ast x_{j},\ldots ,x_{j-1}\ast x_{j},x_{j+1},\ldots ,x_{n})
\end{equation*}
\begin{equation*}
=((x_{0}\ast x_{j})\ast (x_{i}\ast x_{j}),\ldots ,(x_{i-1}\ast x_{j})\ast
(x_{i}\ast x_{j}),x_{i+1}\ast x_{j},\ldots ,x_{j-1}\ast x_{j},x_{j+1},\ldots
,x_{n})
\end{equation*}
and on the other hand 
\begin{equation*}
d_{j-1}d_{i}(x_{0},x_{1},\ldots ,x_{n})=d_{j-1}(x_{0}\ast x_{i},\ldots
,x_{i-1}\ast x_{i},x_{i+1},\ldots ,x_{n})
\end{equation*}
\begin{equation*}
=((x_{0}\ast x_{i})\ast x_{j},\ldots ,(x_{i-1}\ast x_{i})\ast
x_{j},x_{i+1}\ast x_{j},\ldots ,x_{j-1}\ast x_{j},x_{j+1},\ldots ,x_{n}),
\end{equation*}
so by distributivity $d_{i}d_{j}=d_{j-1}d_{i}$ as claimed.
\end{remark}

\subsection{Precubic sets and modules; geometric realization. }

We define the precubic set in a way similar to presimplicial set but replacing simplexes by cubes. Also, analoguesly like in the case of simplicial set we use precubic set as an instruction how to glue cubes to get a topological space (CW-complex). We call this a geometric realization of a precubic set and it has the same homology as a precubic set.

\begin{definition}\label{precubic}
\begin{enumerate}
\item[(1)] A precubic set ${\mathcal C}=(X_n,\,d_{i,n}^{\varepsilon})$ is 
a collection of sets $X_n$, $n\geq 0$, together  with maps called face maps
\begin{equation*}
d_{i,n}^{\varepsilon} = d_{i}^{\varepsilon}: X_n\to X_{n-1},\, i = 1,\ldots, n,\, \varepsilon \in\{0,\,1\}
\end{equation*}
such that 
\begin{equation*}
d_i^{\varepsilon}d_j^{\delta}= d_{j-1}^{\delta}d_i^{\varepsilon} \mbox{ for } 1\leq i < j \leq n,\ \varepsilon,\delta \in \{0,1\}.
\end{equation*}
\item[(2)] A precubic module $\mathcal C$ is a collection of $k$-modules $C_{n}$, $n \geq 0$, together with maps called face maps or face operators,
\begin{equation*}
d_{i,n}^{\varepsilon}=d_{i}^{\varepsilon}: C_{n} \to C_{n-1},\, i = 1,\ldots, n,\, \varepsilon\in \{0,\,1\}  
\end{equation*}
such that
\begin{equation*}
d_i^{\varepsilon}d_j^{\delta} = d_{j-1}^{\delta}d_i^{\varepsilon} \mbox{ for } 1 \leq i<j \leq n, \ \varepsilon,\delta \in \{0,1\}.  
\end{equation*}
\item[(3)] For a precubic set $(X_{n},d_{i}^{\epsilon })$ there is a naturally associated precubic module obtained by choosing a free $k$-module $C_{n}=kX_{n}$ with basis $X_{n}$ and $d_{i,n}^{\epsilon }:C_{n}\rightarrow C_{n-1}$ being extensions of $d_{i,n}^{\epsilon }:X_{n}\rightarrow X_{n-1}$.
\end{enumerate}
\end{definition}
Recall that a precubic module leads to the chain complex $(C_n,\partial_n)$ by putting 
\begin{equation*}
\partial_n=\sum_{i=1}^n(-1)^i(d_i^{(0)}-d_i^{(1)}).    
\end{equation*}
This chain complex leads to homology (and cohomology) theory.

\begin{definition}\label{Definition 13.6}
The geometric realization of a precubic set is a topological space (CW-complex) defined as follows (notice that
$X_n$ is indexing cubes and precubic structure gives an instruction how to glue the cubes together):
\begin{equation*}
|{\mathcal X}|= \bigcup_{n\geq 0} (X_n \times I^n)/\sim_{rel},  
\end{equation*}
where $\sim_{rel}$ is an equivalence relation generated by $(x,d^i_{\varepsilon}(y)) \sim_{rel} (d_i^{\varepsilon}(x),y)$, $y\in I^{n-1}$ and as before,  $d_i^{\varepsilon}: X_i\to X_{i-1}$, $x\in X_n$, and the model cube 
\begin{equation*}
I^n = [0,1]^n = \{(x_1,\ldots,x_n)\in R^n \mid 0\leq x_i \leq 1 \}, 
\end{equation*}
$d^i_\varepsilon = d^{i,n-1}_{\varepsilon}: I^{n-1}\to I^n$, where $d^{i,n-1}_{\varepsilon}$ is defined by
\begin{equation*}
d^{i,n-1}_{\varepsilon}(x_{1},\ldots,x_{n-1}) = (x_{1},\ldots,x_{i-1},\varepsilon,x_{i},\ldots,x_{n-1}).     
\end{equation*}
\end{definition}

\subsection{Invariants of links from Yang-Baxter equations}

``One can trace basically three streams of ideas from which YBE (Yang-Baxter Equation) has emerged:
the Bethe Ansatz, commuting transfer matrices in statistical mechanics, and factorize $S$ matrices in field theory" \cite{Jimb}.

Homology for racks (i.e., invertible distributive magmas)
was defined sometimes between $1990$ and $1995$ by Fenn, Rourke, and Sanderson \cite{FRS-1,FRS-2,FRS-3,Fenn}. 
Cocycle invariants of knots were constructed by Carter et al.
(i.e., for quandles in \cite{CJKS,CJKLS} and for biquandles/set theoretic
Yang-Baxter operators in \cite{CES}). I introduced one term homology of Yang-Baxter operator with a (column) unital condition during conference in Moscow and for two terms at the
conference in Oberwolfach (2012). The homology of general Yang-Baxter operators were introduced in 2012 in \cite{Leb-1,Leb-2,Prz10,Prz11} (compare also
\cite{FIKM}, and \cite{Eis-1,Eis-2}).

%\subsection{Invariants of arc colorings}

\subsection{Distributive homology}
We recall, after \cite{Prz8,Prz11} basic facts about distributive homology and its relation to presimplicial sets and precubic sets.

A shelf (or right distributive system (RDS)) is a binary structure $\left( X;\ast \right) $, where $X$ is a set and $\ast :X\times X\rightarrow X$ is a right self-distributive binary operation (i.e., $(a\ast b)\ast c=(a\ast c)\ast (b\ast c)$). For simplicity, we will work with chain complexes and homology over $\mathbb{Z}$ however, we can replace $\mathbb{Z}$ by any commutative ring $k$ in our considerations.

We start our discussion from atomic definition, one term distributive homology,  introduced in 2010 just before Knots in Poland III conference \cite{Prz8}.

\subsection{One-term distributive homology}

\begin{definition}\label{Definition 6.1}
For a shelf $\left( X;\ast \right) ,$ a (one-term)
distributive chain complex $\mathcal{C}^{(\ast )}$ consists of chain groups $%
C_{n}=\mathbb{Z}X^{n+1}$ and boundary operations $\partial _{n}^{(\ast )}:C_{n}\rightarrow
C_{n-1}$ given on the basis $X^{n+1}$ by:
\begin{equation*}
\partial _{n}^{(\ast )}(x_{0},\ldots ,x_{n})=(x_{1},\ldots
,x_{n})+\sum_{i=1}^{n}(-1)^{i}(x_{0}\ast x_{i},\ldots ,x_{i-1}\ast
x_{i},x_{i+1},\ldots ,x_{n}).
\end{equation*}
The homology $H_{n}^{(\ast )}(X)$ of this chain complex is called a \emph{
one-term distributive homology} of $(X;\ast )$.
\end{definition}
We directly verify that $\partial^{(*)}\partial^{(*)}=0$. Furthermore, if we put $C_{-1}=\mathbb{Z}$ and $\partial_{0}(x)=1$, then $\partial_{0}\partial_{1}^{(\ast)}=0$. Thus, we obtain an augmented distributive chain complex and an augmented (one-term) distributive homology, $\widetilde{H}_{n}^{(\ast )}$. As in the classical case we get:
\begin{proposition}\label{Proposition 6.4}
\begin{equation*}
 H_n^{(*)}(X)=
 \begin{cases}
 \mathbb Z \oplus \tilde H^{(*)}_n(X) & n = 0 \\
 \tilde H^{(*)}_n(X) & \text{otherwise.}
 \end{cases}   
\end{equation*}
\end{proposition}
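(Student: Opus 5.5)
The plan is to compare the augmented complex $\widetilde{\mathcal C}^{(\ast)}$, which has $C_{-1}=\mathbb Z$ and $\partial_0(x)=1$, with the unaugmented complex $\mathcal C^{(\ast)}$. The two complexes agree in every degree $n\geq 0$ and in every boundary map $\partial_n^{(\ast)}$ with $n\geq 1$. Hence for $n\geq 1$ the cycles $\ker\partial_n^{(\ast)}$ and the boundaries $\mathrm{im}\,\partial_{n+1}^{(\ast)}$ are literally the same subgroups in both complexes. This gives $H_n^{(\ast)}(X)=\widetilde H_n^{(\ast)}(X)$ for $n>0$ with no work.

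For $n=0$, in the unaugmented complex every element of $C_0=\mathbb ZX$ is a cycle, so $H_0^{(\ast)}(X)=\mathbb ZX/\mathrm{im}\,\partial_1^{(\ast)}$. In the augmented complex, $\widetilde H_0^{(\ast)}(X)=\ker\partial_0/\mathrm{im}\,\partial_1^{(\ast)}$. The identity $\partial_0\partial_1^{(\ast)}=0$, noted just before the statement, ensures $\mathrm{im}\,\partial_1^{(\ast)}\subseteq\ker\partial_0$. This identity is immediate, since $\partial_1^{(\ast)}(x_0,x_1)=(x_1)-(x_0\ast x_1)$ is sent to $1-1=0$. Therefore $\partial_0$ descends to an epimorphism $\bar\partial_0\colon H_0^{(\ast)}(X)\to\mathbb Z$. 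It is onto for nonempty $X$, because $\partial_0(x)=1$ for any $x\in X$. Its kernel is exactly $\ker\partial_0/\mathrm{im}\,\partial_1^{(\ast)}=\widetilde H_0^{(\ast)}(X)$.

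This yields a short exact sequence $0\to\widetilde H_0^{(\ast)}(X)\to H_0^{(\ast)}(X)\to\mathbb Z\to 0$. It splits because $\mathbb Z$ is free. Concretely, fix any $x\in X$ and send $1\mapsto[(x)]$. Thus $H_0^{(\ast)}(X)\cong\mathbb Z\oplus\widetilde H_0^{(\ast)}(X)$.

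The only points needing care are the nonemptiness of $X$ (surjectivity of $\partial_0$) and the splitting. Over a general commutative ring $k$ in place of $\mathbb Z$, the same argument works, since $k$ is free as a $k$-module. No deeper obstacle is expected: the statement is the standard reduced-versus-unreduced homology comparison, and distributivity enters only through $\partial^{(\ast)}\partial^{(\ast)}=0$, which is already established.
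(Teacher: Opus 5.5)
Your argument is correct and is exactly the standard reduced-versus-unreduced comparison that the paper invokes with the phrase ``as in the classical case'' (the paper gives no further proof). In degrees $n\geq 1$ the two complexes coincide, and in degree $0$ the augmentation gives the split short exact sequence $0\to\widetilde H_0^{(\ast)}(X)\to H_0^{(\ast)}(X)\to\mathbb Z\to 0$. Your remark that $X$ must be nonempty is a genuine tacit hypothesis of the statement, and the same surjectivity of $\partial_0$ also gives $\widetilde H_{-1}^{(\ast)}(X)=0$, which covers the degree $-1$ instance of ``otherwise''.
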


If  $(X;*)$ is a rack then the complex $(C^{(*)}_n, \partial^{(*)})$ is acyclic, but in the general case
of a shelf or spindle homology can be nontrivial with nontrivial free and torsion parts (joint work with
A.~Crans, K.~Putyra and A.~Sikora \cite{CPP,PrPu1,P-S}).

If we define $d_i:C_n \to C_{n-1}$, $0\leq i \leq n$,  by
\begin{equation*}
d_i(x_0,\ldots,x_n)=(x_0*x_i,\ldots,x_{i-1}*x_i,x_{i+1},\ldots,x_n),
\end{equation*}
then $(X^{n+1},d_i)$  is presimplicial set and $(C_n,d_i)$ is a presimplicial module  .
If we define degeneracy maps
\begin{equation*}
s_i(x_0,\ldots,x_n)=(x_0,\ldots,x_{i-1},x_i,x_i,x_{i+1},\ldots,x_n)
\end{equation*}
then one checks that $(C_n,\,d_i,\,s_i)$ is a very weak simplicial module, see \cite{Prz8}.\footnote{
It was a very important impulse for my research when I decided to
explore to what extent  a quandle presimplicial 
module is also a simplicial module with this degeneracy; this, in particular, led me to the definition of a weak and very
weak simplicial module. Simplicial module under the name complete semi-simplicial complex was introduced by Eilenberg and Zilber in 1950, \cite{E-Z}.} If we assume idempotency, that is $(X;*)$ is a
spindle, then $(C_n,\,d_i,\,s_i)$ is a weak simplicial module and the degenerate part $(C^D_n,\,\partial_n)$
is a subchain complex which splits from $(C_n,\,\partial_n)$ (see \cite{Prz8}). This split is analogous to the one
conjectured in \cite{CJKS} and proved in \cite{L-N} for classical quandle homology (for a history of
quandle homology see \cite{Car}). In \cite{NiPr2} we
gave very short, easy to visualize and to generalize, proof using the split map $C^{N}_{n} \to C_{n}$ given by
$(x_0,x_1,\ldots,x_n) \to (x_0,x_1-x_0,\ldots,x_n-x_{n-1})$, essentially following \cite{L-N}.\footnote{$C^N_n$ denotes the normalized chain complex, that is the quotient of $C_n$ by the degenerate complex $C^D_n$.} The substantial generalization is given in my paper with Krzysztof Putyra \cite{PrPu2}. We showed that in the case of quandles the rack homology follows from the quandle homology in a form similar to K\"uneth formula (see Subsection \ref{Putyra}).

We can essentially repeat our definitions given above in
the case when $(X;\ast )$ is a shelf and $Y$ is a shelf-set, i.e., $\ast
:Y\times X\rightarrow Y$ with $(y\ast x_{1})\ast x_{2}=(y\ast x_{2})\ast
(x_{1}\ast x_{2})$ (see Figure $20$ for a visualization).
The presimplicial set $(Y\times X^{n+1},d_i)$ has face maps $d_i$ defined by
\begin{equation*}
d_i(y,x_0,\ldots,x_n)=(y*x_i,x_0*x_i,\ldots,x_{i-1}*x_i,x_{i+1},\ldots,x_n).  
\end{equation*}
The face map $d_i$ is visualized in Figure \ref{di-distrNew} and this graphical representation of $d_{i}$ will play an important role when we generalize distributive homology to Yang-Baxter homology.

\begin{figure}[ht]
\centering
\scalebox{.40}{\includegraphics{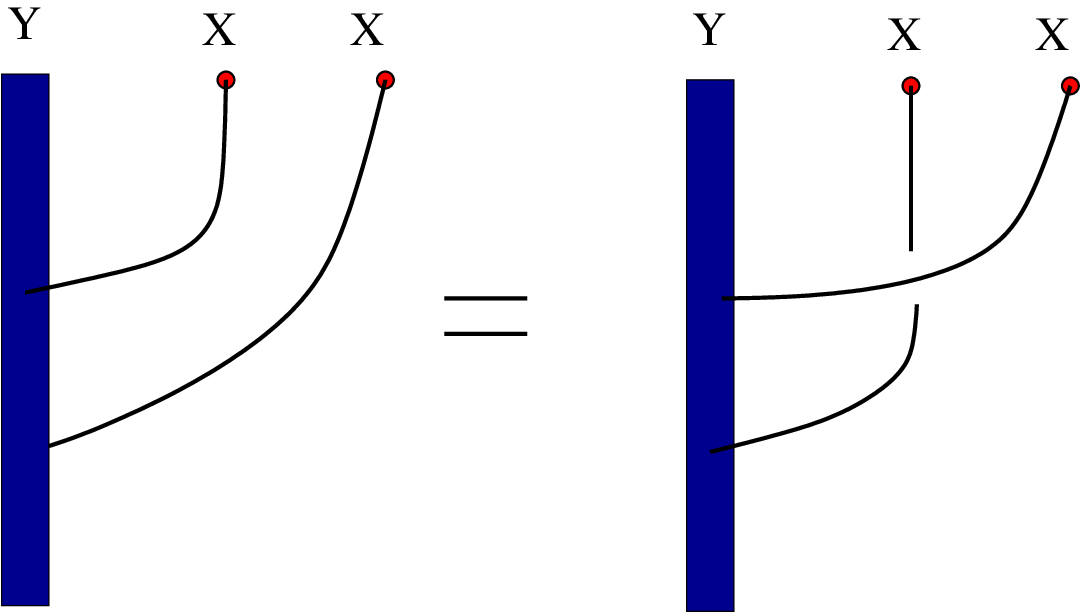}}
\caption{Graphical interpretation of the axiom for $X$-shelf-set $Y$; $(y*x_1)*x_2 = (y*x_2)*(x_1*x_2)$}
\label{Y-shelf-axiom}
\end{figure}
\ \\
%\centerline{\psfig{figure=Y-shelf-axiom.eps,height=3.3cm}}\centerline{Figure 2.1; Graphical interpretation of the axiom for $X$-shelf-set $Y$} \centerline{$(y*x_1)*x_2 = (y*x_2)*(x_1*x_2)$}

\begin{figure}
\centering
\scalebox{.40}{\includegraphics{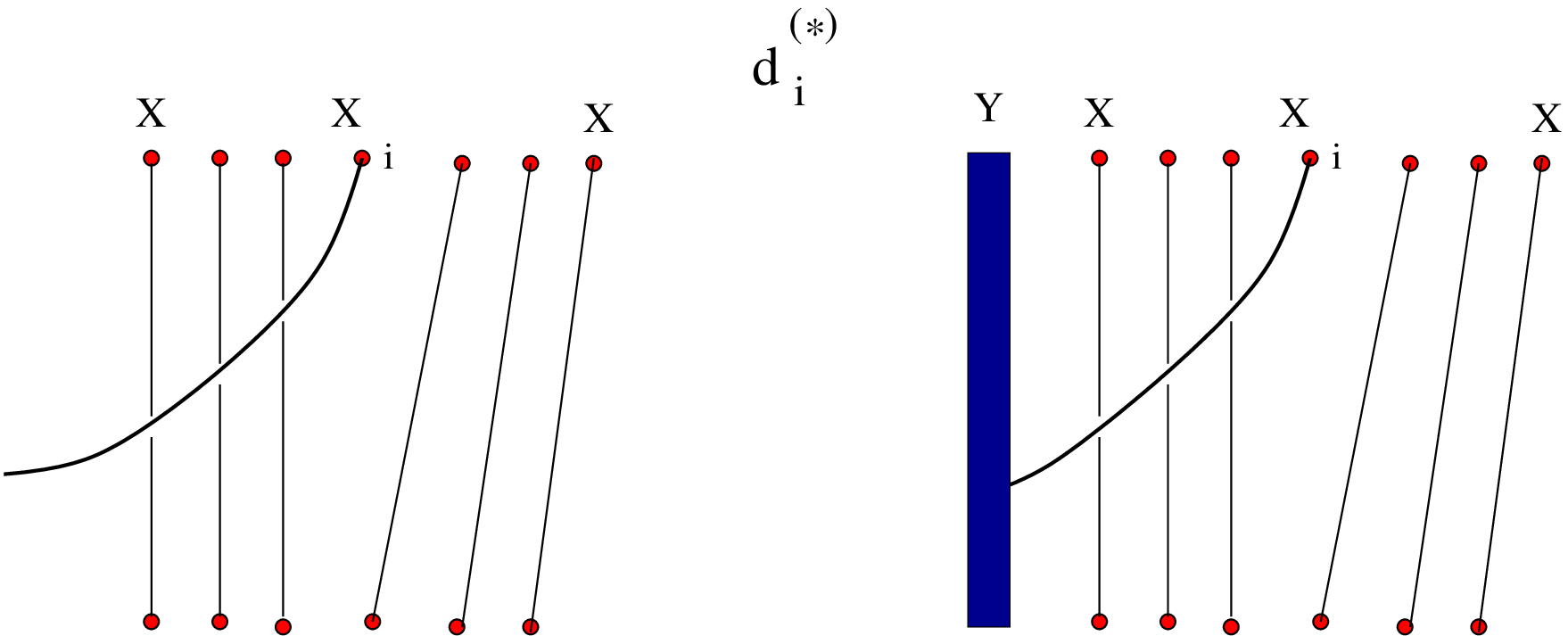}}
\caption{Graphical interpretation of a one term face map $d^{(*)}_i$}
\label{di-distrNew}
\end{figure}
\ \\
%\centerline{\psfig{figure=di-distrNew.eps,height=4.3cm}}
%\centerline{Figure 2.2; Graphical interpretation of a one term face map $d^{(*)}_i$}

\subsection{Multi-term distributive homology}\label{Subsection 2.2}

The first homology theory related to a self-distributive structure was constructed in early 1990s by
Fenn, Rourke, and Sanderson \cite{FRS-2} and motivated by (higher dimensional) knot theory\footnote{ Roger Fenn, \cite{Fenn}, states:
"Unusually in the history of mathematics, the discovery of the homology and classifying space of a rack can be precisely dated to 2 April 1990."}.
For a rack $(X,*)$, they defined rack homology $H_n^R(X)$ by taking $C^R_n=\mathbb Z X^n$ and $\partial_n^R: C_n \to C_{n-1}$ is given by 
$\partial_n^R = \partial_{n-1}^{(*)}-\partial_{n-1}^{(*_0)}$.
Our notation has grading shifted by 1, that is\footnote{A difference in conventions follow from the fact that we go for and back from presimplicial to precubic sets or modules.}, 
\begin{equation*}
C_n(X)= C^R_{n+1}= \mathbb Z X^{n+1}.
\end{equation*}
It is  routine to
check that $\partial^R_{n-1}\partial_n^R=0$. However, it is an interesting question what properties
of $*_0$ and $*$ are really used. 
As one of the first observations made in \cite{Prz8}, I noticed that it
is distributivity again which makes $(C^{R}(X),\partial _{n}^{R})$ into a chain complex. More generally we
observed that if $*_1$ and $*_2$ are right self-distributive and distributive  with respect to each other,
then $\partial^{(a_1,a_2)}= a_1\partial^{(*_1)}+a_2\partial^{(*_2)}$ leads to a chain complex
(i.e., $\partial^{(a_1,a_2)}\partial^{(a_1,a_2)}=0$).\\
We can repeat this construction for any number of pairwise distributive operations $*_1,\,*_2,\,\ldots,\,*_k$ and consider $kX^{n+1}$ with a boundary map defined by 
\begin{equation*}
\partial^{(a_1,\,a_2,\ldots,a_k)}= \sum_{i=1}^ka_i\partial^{(*_i)}   
\end{equation*}
(see \cite{Prz8,Prz11} for details). 

\subsection{Work with Krzysztof Putyra}\label{Putyra}
One of our important and rather unexpected contribution is theorem
that rack homology (and degenerate homology) follows from quandle homology. The relation is very convincing and resembles K\"uneth formula.
\begin{theorem}[\cite{PrPu2}] Let $X$ be a quandle and $k$ a Principal Ideals Domain (e.g., $\mathbb{Z}$) then there is a short exact sequence of homology
\begin{equation*}
0 \to \bigoplus_{p+q=n}\hat H_{q-2}(X) \otimes H^N_p(X) \to H^D_n(X) \to \bigoplus_{p+q=n-1}Tor(\hat H_{q-2}(X), H^N_p)(X)\to 0    
\end{equation*}
which splits. In the formula $\hat{H}$ denotes augmented rack homology, $H^N$ normalized (quandle) homology, and $H^D$ degenerate homology.
\end{theorem}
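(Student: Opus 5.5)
Following the Litherland--Nelson splitting, I will show that the degenerate complex $C^D_*(X)$ is isomorphic to a tensor product of a shifted augmented rack complex with the normalized (quandle) complex. The K\"unneth theorem over the principal ideal domain $k$ then gives exactly the stated split short exact sequence. So the whole proof reduces to building one chain isomorphism of the form
\begin{equation*}
C^D_n(X)\;\cong\;\bigoplus_{p+q=n}\hat C_{q-2}(X)\otimes C^N_p(X),
\end{equation*}
with the grading shift by $2$ recorded by the $q-2$.

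The first step is to filter $C^D_n$ by the position of the first repetition. A basis element $(x_0,\ldots,x_n)$ is degenerate if $x_i=x_{i+1}$ for some $i$; let $i$ be the smallest such index. The prefix $(x_0,\ldots,x_i)$ has no adjacent repetitions, so it is a normalized chain of degree $i$. The doubled letter $x_i,x_{i+1}$ together with the suffix $(x_{i+2},\ldots,x_n)$ is an arbitrary rack-type chain, and I regard it as an element of the augmented rack complex with the appropriate shift. To make this compatible with differentials I will use the change of basis already used in \cite{NiPr2}, namely $(x_0,x_1,\ldots,x_n)\mapsto(x_0,x_1-x_0,\ldots,x_n-x_{n-1})$. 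In the new basis degeneracy is simply the vanishing of a coordinate, and the face maps become manageable.

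The second step is to check that the resulting map intertwines the boundary $\partial=\partial^{(*)}-\partial^{(*_0)}$ on $C^D$ with the tensor-product differential $\partial\otimes 1\pm 1\otimes\partial$. Faces $d_j$ with $j$ in the suffix act only on the rack factor. Here the key facts are right distributivity, which lets $*x_j$ pass through the prefix, and idempotency $x*x=x$, which keeps the repeated pair $x_i,x_i$ degenerate after applying $*x_j$. Faces in the prefix act only on the normalized factor. The faces $d_i$ and $d_{i+1}$ touching the doubled letter cancel in the alternating sum precisely because $x_i*x_i=x_i$; this cancellation is what makes the splitting work for quandles rather than arbitrary racks.

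The main obstacle is exactly this compatibility. A face applied inside the normalized prefix can create a new, earlier repetition, which moves the element to a different summand, so the naive "first repetition" decomposition is only a filtration, not a direct sum. I would first try to show that the induced map on the associated graded is an isomorphism of complexes, and then lift it to a genuine chain isomorphism using an explicit correction, in the spirit of the split map $C^N_n\to C_n$ of \cite{L-N,NiPr2}, defined inductively on filtration degree. If that fails, I would fall back on a spectral-sequence argument: its $E^1$ page is the tensor product, and I would argue that it degenerates because the splitting $C_n=C^D_n\oplus C^N_n$ is already available. With the isomorphism established, the algebraic K\"unneth formula finishes the proof. The $\hat H_{q-2}\otimes H^N_p$ term and the $Tor$ term in degree $n-1$ appear exactly as stated, and the sequence splits, non-naturally, since $k$ is a PID and the chain groups are free.
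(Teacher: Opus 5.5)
Your target, a Künneth formula for a complex of the form $\hat C[2]\otimes C^N$, is the right one, but there is a genuine gap: the first-repetition identification you propose does not respect the differentials. This already fails on the associated graded, not only through the ``earlier repetition'' terms you flag.

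With $d_j(x_0,\ldots,x_n)=(x_0*x_j,\ldots,x_{j-1}*x_j,x_{j+1},\ldots,x_n)$, a face at a suffix letter $y$ acts on everything to its left. So its $*$-part replaces your normalized prefix $u$ by $u*y$. Distributivity and idempotency only guarantee that $u*y$ is still normalized and that the pair stays doubled; they do not remove this action. In addition, the two cancelling faces at the doubled letter include the last face of $u$, which $\partial^N\otimes 1$ requires.

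The smallest case (presimplicial grading) shows both defects:
\begin{itemize}
\item $\partial(x,x,y)=(x*y,x*y)-(x,x)$, yet the corresponding $u\otimes v$ with $u=(x)\in C^N_0$, $v=(y)\in\hat C_0$ is a cycle, since the $0$-th faces of $*$ and $*_0$ agree and so $\hat\partial_0=0$.
\item $\partial(x,y,y)=0$, yet $\partial^N(x,y)=(x)-(x*y)\neq 0$.
\end{itemize}

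In your filtration by the first-repetition index, every prefix face lowers the index. Hence $F_p/F_{p-1}$ carries only the suffix faces: it is the rack complex of $X$ with coefficients in the $X$-set of normalized $(p+1)$-tuples, a twisted complex. For $p=0$ its bottom homology is the module of coinvariants $kX/\langle x*y-x\rangle$, free on the orbits, not $C^N_0\otimes\hat H_{-1}=kX$. So no filtered comparison map can be a quasi-isomorphism on graded pieces, and your $E^1$ page is not $\hat H\otimes C^N$.

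The other two ingredients do not help:
\begin{itemize}
\item The ``change of basis'' $(x_0,x_1-x_0,\ldots,x_n-x_{n-1})$ is the Litherland--Nelson splitting map. It sends every degenerate tuple to $0$, so it gives no coordinates on $C^D$.
\item The splitting $C=C^D\oplus C^N$ says nothing about higher differentials inside a filtration of $C^D$, so it cannot justify degeneration of your spectral sequence.
\end{itemize}

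Because faces act leftward, the normalized factor must sit on the right; this matches the order $\hat H_{q-2}\otimes H^N_p$ in the statement. Write a degenerate tuple by its \emph{last} repetition as $(v,a,a,x_1,\ldots,x_l)$, with $u=(a,x_1,\ldots,x_l)$ normalized and $v$ arbitrary. Then:
\begin{itemize}
\item faces inside $v$ give exactly $\hat\partial v$ (its $0$-th face cancels, matching the zero augmentation);
\item the two faces at $a$ cancel, corresponding to the vacuous $0$-th face of $u$;
\item the faces at $x_j$ give $\pm v\otimes\partial^N u$, up to terms with a new, later repetition (lower filtration) and up to the twist $v\mapsto v*x_j$ in the $*$-part.
\end{itemize}

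The twist is harmless up to homotopy: $v\mapsto\pm(v,x)$ is a chain homotopy on $\hat C$ between $v\mapsto v*x$ and the identity. Filtering by $l$ therefore gives an untwisted $E^0$ (namely $\hat C[2]\otimes C^N_l$ with differential $\hat\partial\otimes 1$), then $E^1=\hat H\otimes C^N$ and $d^1=\pm 1\otimes\partial^N$. So $E^2$ has the Künneth shape.

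This is still not enough for a split short exact sequence computing $H^D_n$. You must either build a chain-level quasi-isomorphism between $\hat C[2]\otimes C^N$ and $C^D$, or prove degeneration and trivial extensions. A natural route is to correct the naive map $v\otimes u\mapsto(v,a,a,x_1,\ldots,x_l)$ using the homotopies above and absorb the new-repetition terms. That comparison is the substantive step, and your proposal does not supply it.
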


We also computed with Krzysztof various multiterm homology, including that for finite distributive lattices, including Boolean algebras \cite{PrPu1}.\footnote{While working with Krzysztof on this paper (August 23, 2011), there was an earthquake and we both got a bit shaken. However, after spending two years in California (University of California at Riverside, 1990-1992) I was already quite used to earthquakes by then.}  Since examples considered in \cite{PrPu1} are interesting and also
asking for more work to be done, let us recall the following definition.

\begin{definition}\label{Lattice} Let $(X,\ast _{\cup },\ast _{\cap })$ be a distributive lattice and let $\ast _{\sim }=\ast _{\cap }\ast _{\cup },$
i.e., 
\begin{equation*}
a\ast _{\sim }b=b.
\end{equation*}%
As before, let $\ast _{0}$ be the identity element of $Bin(X)$, that is, $a\ast
_{0}b=a.$ With these choices, the chain complex with the four term boundary
operation $\partial $ on $kX^{n+1}$ is defined by: 
\begin{equation*}
\partial^{(a,b,c,d)}=a\partial ^{(\ast _{0})}+b\partial ^{(\ast _{\cup
})}+c\partial ^{(\ast _{\cap })}+d\partial^{(\ast _{\sim })}.
\end{equation*}
\end{definition}
In \cite{PrPu2} we focused on the four-element Boolean algebra (i.e.,
subsets of a two-element set).

\section{From distributive homology to Yang-Baxter homology}\label{Section 3}

We can extend our basic construction described in the introduction
(using still a very naive point of view), as follows: Fix a finite set $X$ and color semi-arcs of $D$ (parts of $D$ from a crossing to a crossing) by elements of $X$
 allowing different weights from
some ring $k$ for every crossing (following statistical mechanics terminology we call these weights Boltzmann weights).
We also allow to distinguish between a negative and a positive
crossing (see Figure~\ref{Boltzman-weightWW}).
\begin{figure}[ht]
\centering
\scalebox{.41}{\includegraphics{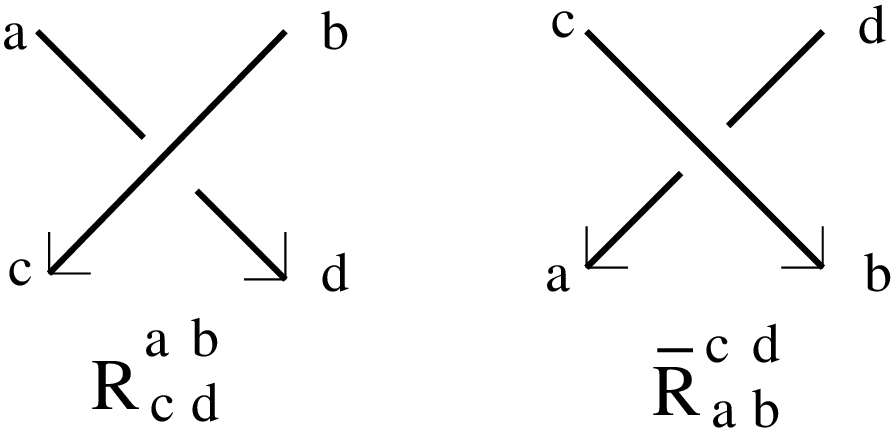}}
\caption{Boltzmann weights $R^{a,b}_{c,d}$ and $\bar R_{a,b}^{c,d}$ for positive and negative crossings}
\label{Boltzman-weightWW}
\end{figure}

%\centerline{\psfig{figure=Boltzman-weightWW.eps,height=3.3cm}}\ \\ \ \\
%\centerline{Figure 3.1; Boltzmann weights $R^{a,b}_{c,d}$ and $\bar R_{a,b}^{c,d}$ for positive and negative crossings}
\ \\
We can now generalize the number of colorings to state sum (basic notion of statistical physics)
by multiplying Boltzmann weight over all crossings and adding over all colorings:
\begin{equation*}
\mathrm{col}_{(X;BW)}(X)= \sum_{\phi\in \mathrm{col}_X(D)}\prod_{p\in \{crossings\}}\hat R^{a,b}_{c,d}(p) 
\end{equation*}
where $\hat R^{a,b}_{c,d} $ is $R^{a,b}_{c,d}$ or $\bar R^{a,b}_{c,d}$ depending on
whether $p$ is a positive or negative crossing, see \cite{Jon3,Tur} (from my personal perspective I would acknowledge the influence of Lou Kauffman paper \cite{Kau} which opened my eyes to relation of knot theory to statistical mechanics).
Our state sum is an invariant of a diagram but to get a link invariant we should test it on Reidemeister moves.
To get analogue of a shelf invariant we start from the third Reidemeister move with all positive crossings, see Figure \ref{Y-B-R3}.\footnote{In the case of a shelf (right self-distributive system) $(X,*)$ we define the operator\\ $R:X\times X \to X\times X$ by setting $R(a,b)=(b,a\ast b)$, so Boltzmann weights are $R^{a,b}_{c,d}=1$\\  when $c=b$ and $d=a\ast b$, and $0$ otherwise. For a rack we also have $\bar R(c,d)=(d\bar \ast c,c)$. }

Thus, in a general case, passing through a positive crossing is
coded by a linear map $R:kX\otimes kX\rightarrow kX\otimes kX$ which in
basis $X\times X$ is given by an $m\times m$ matrix $(R_{c,d}^{a,b})$, where $m=\left\vert X\right\vert ^{2}$. The third Reidemeister move yields the following identity between
maps from $kX\otimes kX\otimes kX$ to itself :
\begin{equation*}
(R\otimes Id)(Id\otimes R)(R\otimes Id) = (Id\otimes R)(R\otimes Id)Id\otimes R)
\end{equation*}
with graphical representation shown in Figure~\ref{Y-B-R3}. 
The above identity is called the \emph{Yang-Baxter equation}\footnote{Older names include: the star-triangle relation, the triangle equation, and the factorization equation, \cite{Jimb}.} with $R$ being called a \emph{pre-Yang-Baxter operator.}
If $R$ is additionally invertible it is called a Yang-Baxter operator. If entries of $R^{-1}$ are
equal to $\bar R^{a,b}_{c,d}$ then the state sum is invariant under ``parallel" (directly oriented)
 second Reidemeister move, see Figure \ref{Y-Binv-R2}.\footnote{We should stress that to find link invariants it suffices to use
directly oriented second and third Reidemeister moves in addition to both first Reidemeister moves,
as we can restrict ourselves to braids and use the Markov theorem. This point of view was used in \cite{Tur}.}

\begin{figure}
\centering
\scalebox{.25}{\includegraphics{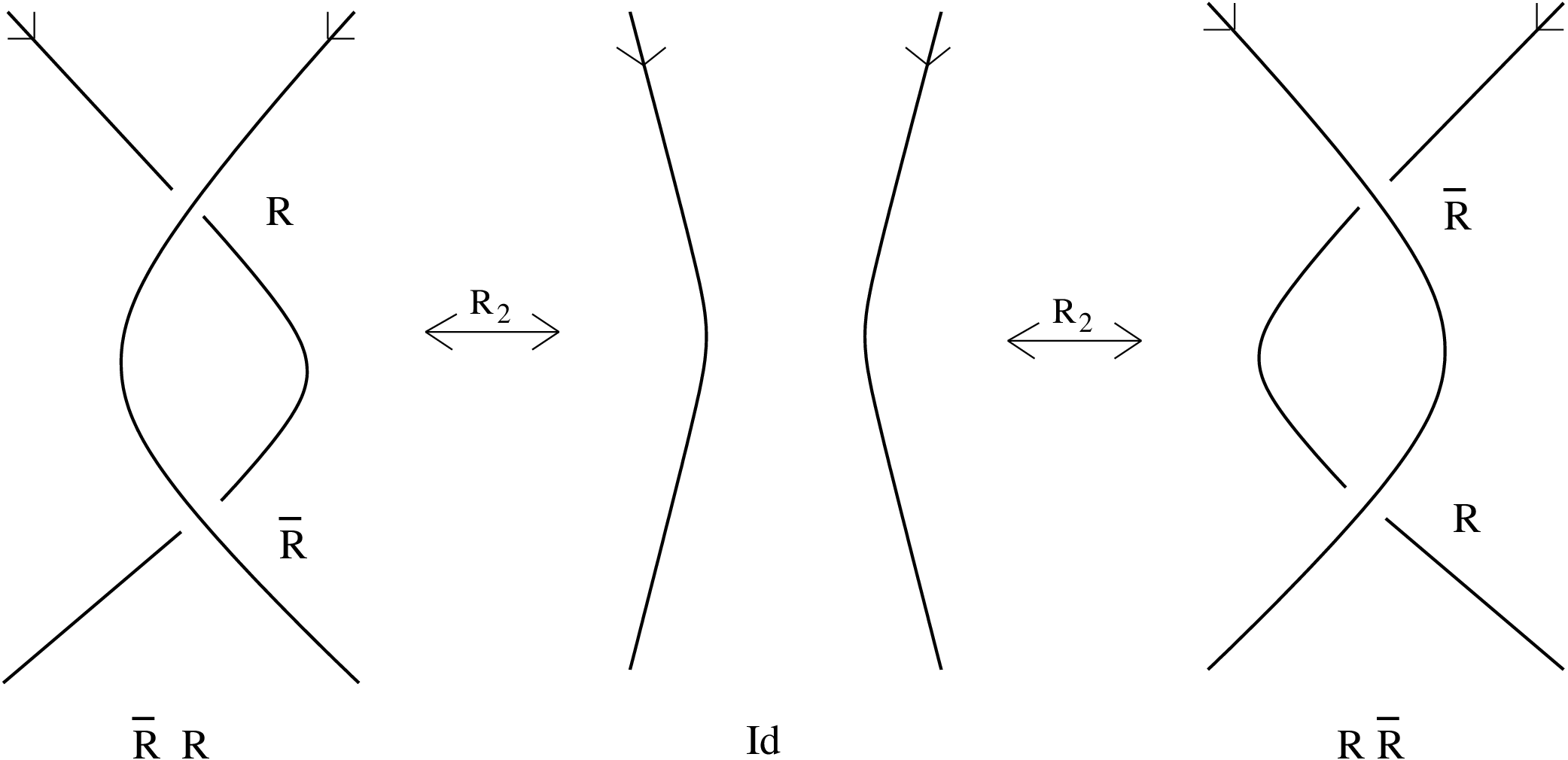}}
\caption{Invertibility of $R$ and the parallel second Reidemeister move}
\label{Y-Binv-R2}
\end{figure}

\begin{figure}[ht]
\centering
\scalebox{.50}{\includegraphics{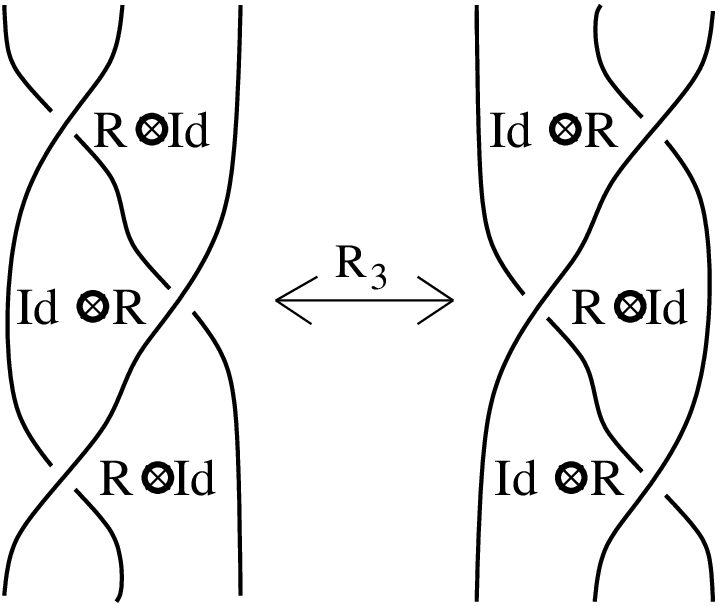}}
\caption{Yang-Baxter equation from the positive third Reidemeister move}
\label{Y-B-R3}
\end{figure}

For a given pre-Yang-Baxter operator we attempt to find presimplicial module, from which homology
will be derived.

Figure \ref{Y-Bpresim} below illustrate various graphical interpretation of the generating morphism $d_i$ of the
presimplicial category. They are related to homology of a set-theoretic
Yang-Baxter equation of Carter-Elhamdadi-Saito \cite{CES} and Fenn \cite{FIKM},
and to homology of Yang-Baxter equation of Eisermann
\cite{Eis-1,Eis-2}. We should also acknowledge stimulating observations by Ivan Dynnikov who was in audience of my Moscow talks and asked sharp questions.\footnote{Lomonosov Moscow State University, Russia; topology mini-course:\\
 (i) From knot theory to associative and distributive homology. I (May 29, 2012);\\
(ii) From knot theory to associative and distributive homology. II (May 30, 2012);\\
It was at (and after) this talk when we discussed with Ivan various visualizations of Yang-Baxter homology. Thus 22 years passed from introduction of rack homology to construction of general Yang-Baxter homology (compare \cite{Fenn}). \\
(iii) From knot theory to associative and distributive homology. III (June 1, 2012).}

\begin{figure}[ht]
\centering
\scalebox{.40}{\includegraphics{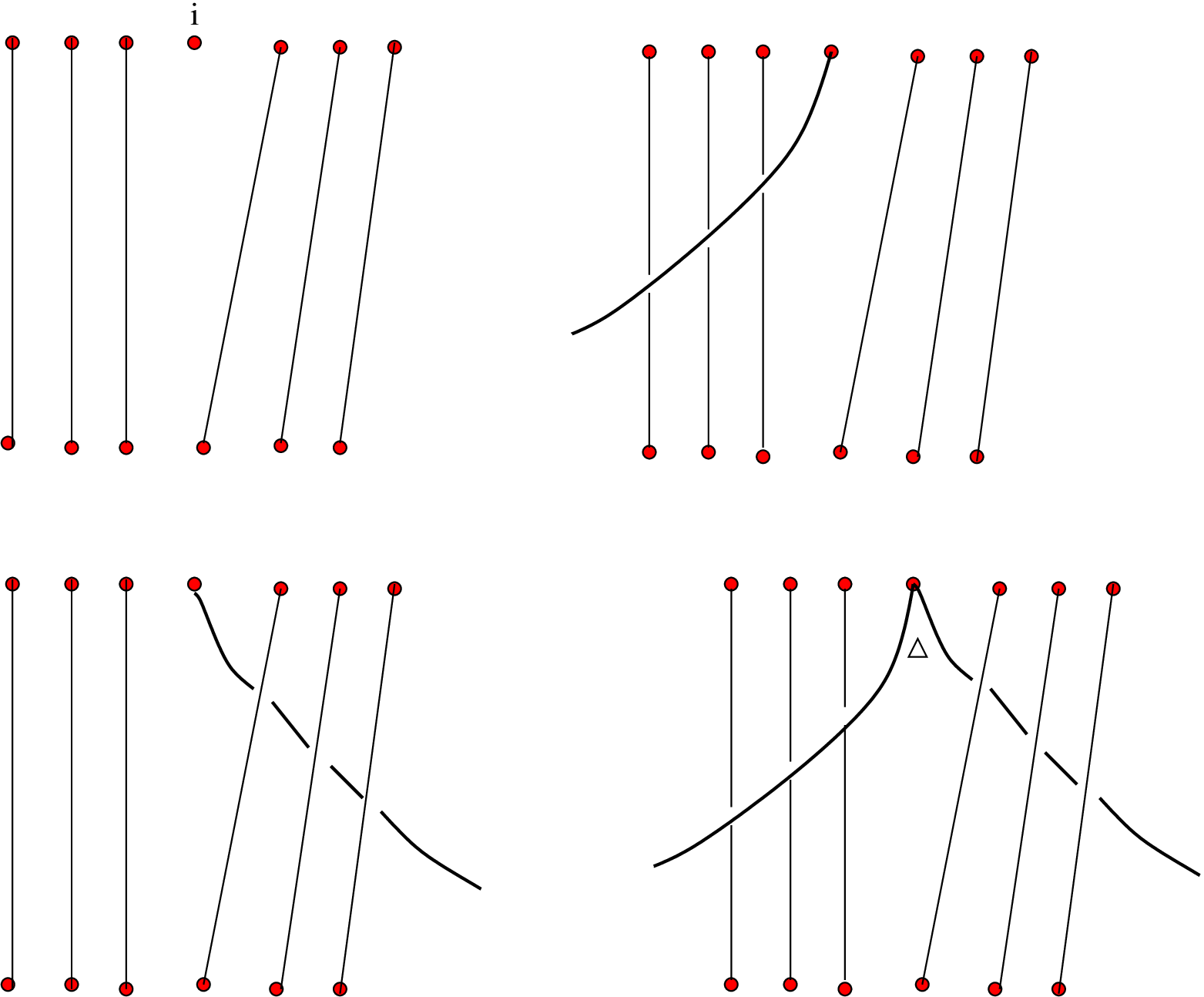}}
\caption{Various possible interpretations of the graphical face map $d_i$; one may find many more depending on needs; for 1-term Yang-Baxter homology, $d_i$ drawn in the bottom right corner was one of the first to be considered}
\label{Y-Bpresim}
\end{figure}
%\centerline{\psfig{figure=Y-Bpresim.eps,height=6.3cm}}
%\centerline{Figure 2.4; Various interpretation of the graphical face map $d_i$}

\subsection{Graphical visualization of Yang-Baxter face maps}\label{Subsection 3.1}
The presimplicial set corresponding to (two term) Yang-Baxter
homology has its graphical representation shown in Figure~\ref{Y-B-cubic}. In the case  of a set-theoretic Yang-Baxter equation we recover the homology defined in \cite{CES}; see \cite{PrWa1}.

\begin{figure}[ht]
\centering
\scalebox{.39}{\includegraphics{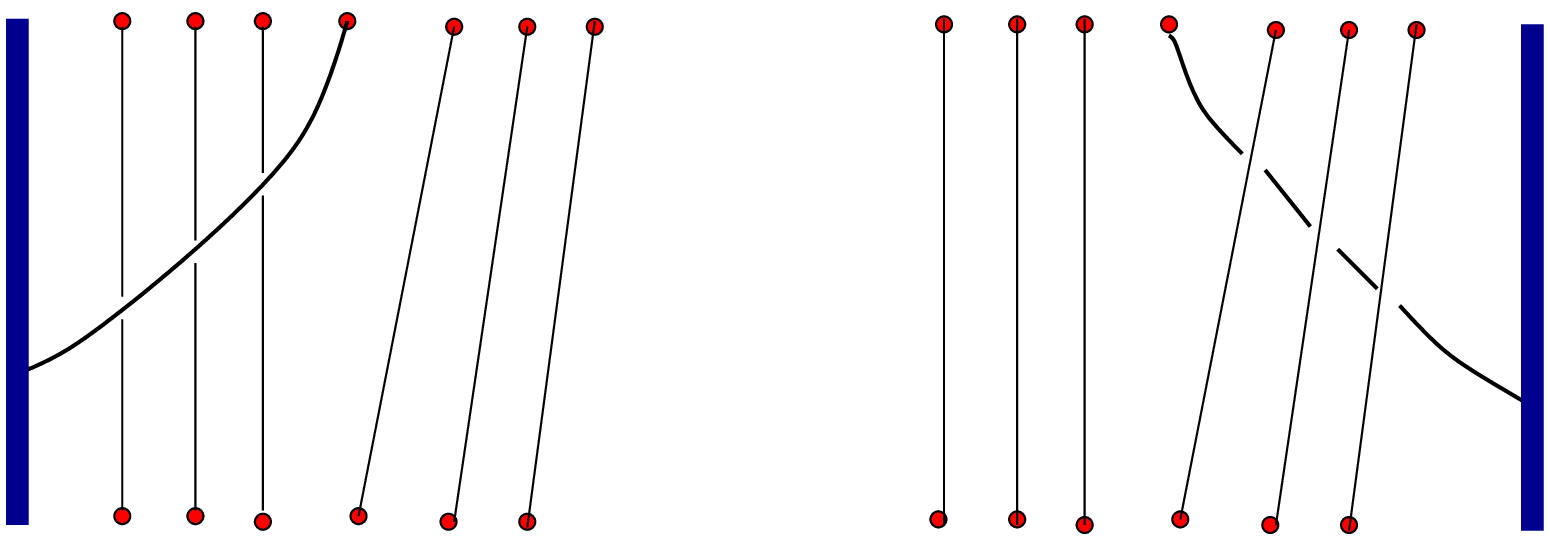}}
\caption{Graphical interpretation of the face map $d_i=d_i^{\ell}-d_i^r$}
\label{Y-B-cubic}
\end{figure}
%\centerline{\psfig{figure=di-YangBaxter.eps,height=6.3cm}}
%\centerline{Figure 2.5; Graphical interpretation of the face map $d_i$}
Our graphical model allows quite straightforward calculation as it
can be seen in the following example.
\begin{example}\label{Example 5.1}
Assume $R:X\times X \to X\times X$ generates set-theoretic Yang-Baxter operator with $R(x,y)= (R_1(x,y),R_2(x,y))$. Then
\begin{equation*}
\partial^{YB}(x_1,x_2,x_3,x_4)=\partial^{\ell} -\partial^{r},
\end{equation*}
where
\begin{eqnarray*}
\partial^{\ell}(x_1,x_2,x_3,x_4) &=& ((x_2,x_3,x_4) - (R_2(x_1,x_2),x_3,x_4)  + (R_2(x_1,R_1(x_2,x_3),R_2(x_2,x_3),x_4)\\
&-& (R_2(x_1,R_1(x_2,R_1(x_3,x_4),R_2(x_2,R_1(x_3,x_4),R_2(x_3,x_4))
\end{eqnarray*}
and
\begin{eqnarray*}
\partial^{r}(x_1,x_2,x_3,x_4)&=& (R_1(x_1,x_2),R_1(R_2(x_1,x_2),x_3),R_1(R_2(R_2(x_1,x_2),x_3),x_4)\\
&-& (x_1,R_1(x_2,x_3),R_1(R_2(x_2,x_3),x_4) + (x_1,x_2,R_1(x_3,x_4)) - (x_1,x_2,x_3).
\end{eqnarray*}
We have generally  for any $n$: 
\begin{equation*}
\partial^{\ell}_n = \sum_{i=1}^n(-1)^{i-1} d_i^{\ell}
\end{equation*}
with
\begin{eqnarray*}
&&d_i^{\ell}(x_1,\ldots,x_n)\\
&=&(R_2(x_1,R_1(x_2,R_1(x_3,\ldots,R_1(x_{i-1},x_i)))),\ldots,R_1(x_{i-1},x_i)),x_{i+1},\ldots,x_n).
\end{eqnarray*}
Similarly we have, directly from Figure~\ref{Y-B-cubic}, for any $n$:
\begin{equation*}
\partial^{r}_n =\sum_{i=1}^n(-1)^{i-1} d_i^{r}
\end{equation*}
with
\begin{eqnarray*}
&&d_i^{r}(x_1,\ldots,x_n) \\
&&= (x_1,\ldots,x_{i-1},R_1(x_i,x_{i+1}),\ldots,R_1(R_2(R_2(\ldots (R_2(x_i,x_{i+1}),x_{i+2}),\ldots,x_{n-1})x_n)))).
\end{eqnarray*}
\end{example}

\section{Decomposition of the third Reidemeister move into cubic face maps}\label{Section 6}

The main idea is illustrated in Figure~\ref{Y-B-2-cycle}. This innocently looking diagram puzzled me from Spring of 2015. I discovered it in a train from Gda\'nsk to Pozna\'n. In Pozna\'n I gave a series of talks for graduate students organized by Krzysztof Pawa{\l}owski (10 double talks) at University of Poznan May 25-29, 2015,
 which I still hope to publish (Wojtek Politarczyk made very good notes)\footnote{Title: Adventures of Knot Theorist: From Fox 3-colorings to Yang-Baxter homology with the Jones polynomial and the Khovanov homology in a background (Przygody badacza w{\c e}z{\l}\'ow: Od 3-kolorowania Foxa do homologii operatora Yanga-Baxtera z wielomianem Jonesa i homologiami Khovanova w tle).} and I always planned to apply it but never had time. Maybe one of the readers could look at it.

%\centerline{\psfig{figure=Y-B-2-cycle.eps,height=8.3cm}}

\begin{figure}
\centering
\scalebox{.35}{\includegraphics{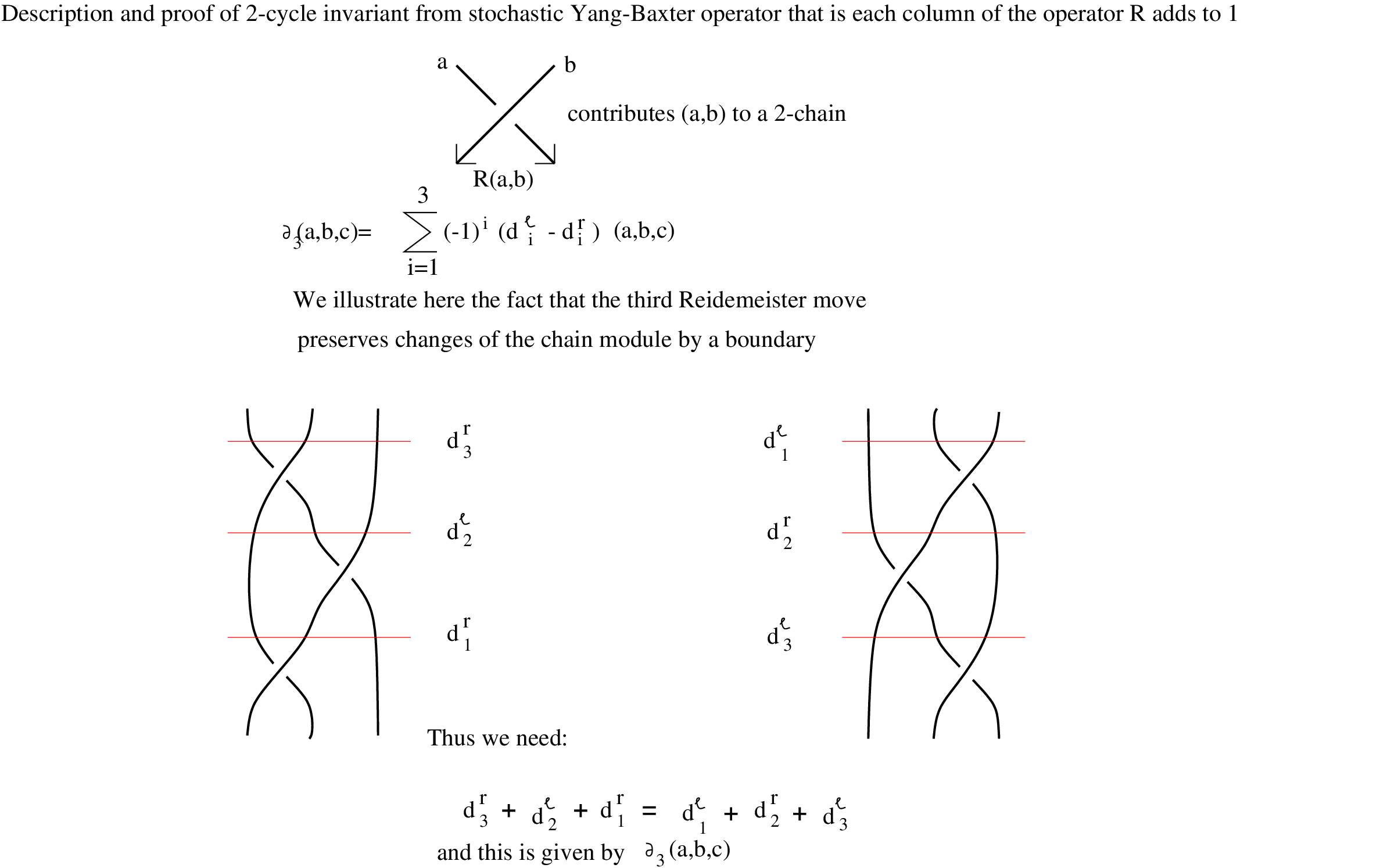}}
\caption{Reidemeister third move and face maps $d^{\varepsilon}_{i}$}
\label{Y-B-2-cycle}
\end{figure}

\section{Computation of homology for Yang-Baxter operators}\label{Section 5}

%\subsection{Homology of set theoretic Yang-Baxter operators}
%\subsection{Homology of associative versus distributive structures}

%\section{From rack homology to homology of Yang-Baxter operators}
After the long preparation we can finally define homology of general Yang-Baxter operators.
In fact it can be done in 5 minutes due to graphical interpretation developed by Victoria Lebed and myself in 2012 \cite{Leb-1,Prz10}. It is defined by the same Figure \ref{Y-B-cubic}. The difference (or rather generalization) is that we use at every crossing the general Yang-Baxter operator. Thus for set theoretic Yang-Baxter operator we construct precubic set but in general case we deal with precubic module. We decided not to discuss boundary (wall) condition and we prefer to consider only the case when the wall absorbs elements. This corresponds to the case when Yang-Baxter matrix $R$ is column unital, that is, entries of each column of $R$ add to $1$, (e.g., column stochastic matrices). Otherwise definition of the boundary operation has the same precubic form:
\begin{equation*}
\partial_n= \partial_n^{\ell} -\partial_n^r =\sum_{i=1}^n(-1)^id_i^{\ell} - \sum_{i=1}^n(-1)^id_i^{r}.   
\end{equation*}
For Yang-Baxter operators giving HOMFLYPT polynomial, only two
weights $R_{a,b}^{c,d}$ can be nonzero for a fixed $(a,b)$, i.e., $(c,d)=(a,b)$ -- smoothing case, and $(c,d)=(b,a)$ -- crossing case. Furthermore, our column unital condition yields a
relation $R_{a,b}^{a,b}+R_{b,a}^{a,b}=1$ between entries of $R$, (see
Subsection~\ref{YB-H} for details).

\subsection{Yang–Baxter homology from the quantum topology viewpoint}
I wrote proposal for Oberwolfach (Research Fellow in Oberwolfach: May 28 - June 17, 2023 (with Misha Khovanov, Louis-Adrian Roberts and Marithania Silvero) where I explained my ideas and hopes related to possible connection between Yang-Baxter homology and  Khovanov homology. I recall it in this Subsection.\footnote{Eventually we worked on a different topic (see \cite{KPRS}).}

{\it We propose to develop connections between Yang–Baxter homology, which is a generalization of the distributive homology, including the rack or quandle homology, and Khovanov homology. We envision the connection via the
cocycle invariants of links obtained from quandle or biquandle homology,
motivating it via Knot Theory.

For a non-specialist we would describe the project as follows: Mathematics that we study in school is usually associative and up till now most of the modern mathematics assumed associativity.

Khovanov homology has an associative and co-associative underlying algebraic structure. Khovanov homology was constructed for and motivated
by theory of knots and links.
Another algebraic structures discovered recently, quandles and racks, are
distributive but not associative. Just like Khovanov homology, quandles
and their homology were motivated by knot theory. No direct link between
Khovanov homology and quandle homology is currently known.
We propose a program which may link these two modern concepts. The
tools we envision are Yang–Baxter operators, powerful tools used in statistical physics, related to at least two Nobel prizes (Yang 1957, Onsager 1968). Homology of distributive magmas, which are generalization of quandles, was developed in the last ten years. From this one starts building homology of Yang–Baxter operators. This theory is still in it early phase, but several special cases have knot theory interpretations. Success in this project will bridge two fundamental notions in mathematics and physics.

More technically, the path we plan to take is as follows: Khovanov homology is the categorification of the Jones polynomial. Jones polynomial can be obtained from a specific Yang–Baxter operator.
Yang–Baxter equation generalizes distributivity in an important way. Ten
years ago homology of distributive structures were constructed and homology
of general Yang–Baxter operators was proposed. Recently, one of us jointly
with collaborators has proposed cocycle knot invariants from Yang–Baxter
operators \cite{PVY}.

We think that knot theory will help relate these two theories. 
Diagrammatic visualization of the Yang–Baxter homology given in the language of
precubic sets (so called curtain models) will be studied and applied during our work, \cite{Prz12}.}

\subsection{Yang-Baxter operators yelding Jones and HOMFLYPT polynomials}\label{YB-H} \ 
As before, let ${k}$ be a commutative ring and let $V={k}X$ be a free ${k}$-module over the basis $X=\left\lbrace  v_{1},\, v_{2}, \dots,\, v_{m} \right\rbrace$ with the ordering $ v_{a}\leq  v_{b}$ if and only if $a \leq b$. Recall, that a ${k}$-linear map 
\begin{equation*}
R: V \otimes V \longrightarrow V\otimes V
\end{equation*}
is a Yang-Baxter operator if it satisfies the Yang-Baxter equation and it is invertible. Jones discovered, \cite{Jon3}, that the Yang-Baxter operator on level $m$, given by the formula below, leads to the HOMFLYPT polynomial (formally a proper substitution in the Jones polynomial):
\begin{equation*}
R_{c \ d}^{a \ b}=
\begin{cases}
q        & \text{if $a=b=c=d;$} \\
1        & \text{if $d=a\neq b=c;$} \\
q-q^{-1}  & \text{if $c=a < b=d;$}\\
0         & \text{otherwise.}
\end{cases}    
\end{equation*}
We check easily that $R^{-1}$ is given by
\begin{equation*}
(R^{-1})_{c \ d}^{a \ b}=
\begin{cases}
q^{-1}        & \text{if $a=b=c=d;$} \\
1        & \text{if $d=a\neq b=c;$} \\
q^{-1}-q  & \text{if $c=a > b=d;$}\\
0         & \text{otherwise.}
\end{cases}
\end{equation*}
To see relation with the Jones polynomial we observe that the minimal polynomial of $R$ is quadratic: 
\begin{equation*}
R-R^{-1}=(q-q^{-1})\mathrm{Id}_{V\otimes V}.  
\end{equation*}
With Xiao Wang (my PhD student at GWU) we adjusted, in \cite{PrWa2}, the matrix above to be a column unital matrix and showed that for each $m$, $R_{(m)}$ is also a Yang-Baxter operator. Precisely, the matrix  is obtained by adding the elements in each column and dividing every element of the column by this sum. In the next theorem, we set 
\begin{equation*}
y^{2}=\dfrac{1}{1+q-q^{-1}}
\end{equation*}
\begin{theorem}[\cite{PrWa2}]\label{PW}
Let ${k}=\mathbb{Z}[y^{\pm1}]$, $m$ be a positive integer, and $V_{m}$ be the free ${k}$-module generated by the set $X_{m} = \left\lbrace v_{1},\, v_{2}, \dots,\, v_{m} \right\rbrace $ with the ordering $ v_{a}\leq  v_{b}$ if and only if $a \leq b$. Then the ${k}$-linear operator $R_{(m)}: V_{m}\otimes V_{m} \longrightarrow V_{m}\otimes V_{m}$ given by the following coefficients $R_{c \ d}^{a \ b}$, is a Yang-Baxter operator for each $m\geq 1$:
\begin{equation*}
R_{c \ d}^{a \ b}=
\begin{cases}
1       & \text{if $d=a \geq b=c;$} \\
y^{2}        & \text{if $d=a < b=c;$} \\
1-y^{2}  & \text{if $c=a < b=d;$}\\
0         & \text{otherwise.}
\end{cases}   
\end{equation*}
\end{theorem}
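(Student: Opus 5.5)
We need to prove that $R_{(m)}$ satisfies the Yang-Baxter equation and is invertible.

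\textbf{Strategy: reduce to Jones' operator by a change of basis.} The plan is to show that $R_{(m)}$ is obtained from Jones' operator $R$ (recalled just before Theorem~\ref{PW}) by scalar multiplication and a diagonal conjugation. Scalars and diagonal conjugations both preserve the Yang-Baxter equation, so the equation will follow without the full $m^3$-case check.

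Concretely, I first compute the column sums of Jones' matrix. For a column $(a,b)$ with $a=b$ the only entry is $q$. For $a>b$ the only entry is $1$, at $(c,d)=(b,a)$. For $a<b$ the entries are $1$ and $q-q^{-1}$, with sum $1+q-q^{-1}=y^{-2}$. So the column sums are not constant.

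Normalizing columns is right multiplication by the diagonal matrix $N$ with entries $1/q$, $1$, $y^{2}$ in these three cases. Right multiplication by a diagonal matrix alone does not preserve the Yang-Baxter equation, so I instead look for a decomposition
\[ R_{(m)}=\lambda\, (D\otimes D)\, P_f\, R\, P_f^{-1}\, (D\otimes D)^{-1}. \]
Here $\lambda$ is a scalar and $P_f$ is a diagonal ``twist'' on $V\otimes V$ with entries $f(a,b)$. The twist is chosen so that conjugation multiplies the crossing entry $(a,b)\to(b,a)$ by $f(b,a)/f(a,b)$, and leaves the diagonal and smoothing entries unchanged.

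Matching entries gives the conditions:
\begin{itemize}
\item on the smoothing entries $c=a<b=d$: $\lambda(q-q^{-1})=1-y^2$;
\item on the diagonal: $\lambda q=1$;
\item on the crossing entries: $\lambda f(b,a)/f(a,b)=1$ for $a\ge b$, and $=y^2$ for $a<b$.
\end{itemize}

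These are consistent provided $y^2=1/q^2$, which need not hold. So the naive scalar approach must be refined.

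\textbf{Refinement: a Hecke-type two-parameter operator.} The better choice is to use the known two-parameter family of Yang-Baxter operators with entries
\[ x \ (a=b),\qquad p\ (d=a<b=c),\qquad r\ (d=a>b=c),\qquad s\ (c=a<b=d). \]
This family satisfies the Yang-Baxter equation exactly when $pr$ and $s$ are compatible with the quadratic relation $(R-x)(R+pr/x)=0$, that is, when $s=x-pr/x$.

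Our $R_{(m)}$ has $x=1$, $p=y^2$, $r=1$, and $s=1-y^2=x-pr/x$, so it lies in this family. I will prove the family statement directly. Triples with a repeated letter reduce to the $m=2$ case, and triples $a<b<c$ give a $6\times 6$ block on permutations of $(a,b,c)$. I expect this block to be the main obstacle. I would organize it by first applying the twist $P_f$ to make $p=r$, which reduces it to Jones' standard verification. This is legitimate because the twist changes only $p$ and $r$, keeping $pr$ fixed.

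\textbf{Invertibility.} Since $(R_{(m)}-1)(R_{(m)}+y^2)=0$, we get $R_{(m)}^{-1}=y^{-2}(R_{(m)}+y^2-1)$. This inverse exists because $y$ is invertible in $k=\mathbb{Z}[y^{\pm1}]$.
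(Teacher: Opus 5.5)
\textbf{Verdict.} Your argument is correct in substance, but one step, the twist, is asserted for the wrong reason and needs a real argument. Your route also differs from the paper's. The paper gives no proof here: it describes $R_{(m)}$ as the column normalization $R_JN$ of Jones' matrix, with $y^2=1/(1+q-q^{-1})$, and refers to the earlier paper of Przytycki and Wang. Right multiplication by $N$ does not preserve the Yang--Baxter equation, so your identification with a twisted, rescaled Jones operator is what actually explains the result.

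\textbf{Your first attempt did not fail.} The condition $y^2=q^{-2}$ only says that the Jones parameter cannot be the $q$ used in the normalization, and nothing forces that $q$. Write $R_J(q)$ for Jones' operator and take $q:=y^{-1}$, $\lambda=y$, with $f(a,b)=y$ for $a>b$ and $f(a,b)=1$ for $a\le b$. Then all three of your matching conditions hold, and
\[ R_{(m)}=y\,P_f\,R_J(y^{-1})\,P_f^{-1}. \]
So the two-parameter Hecke family is an unnecessary detour. The factor $D\otimes D$ is also unnecessary, since it commutes with every operator mapping $a\otimes b$ into $\mathrm{span}\{a\otimes b,b\otimes a\}$.

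\textbf{What your route buys.} It gives a conceptual reason for the Yang--Baxter property. It also shows that normalizing Jones' matrix at $q$ yields, up to twist and scalar, Jones' matrix at $y^{-1}=\sqrt{1+q-q^{-1}}$, not at $q$. Finally, your relation $(R-1)(R+y^2)=0$ reproduces exactly the paper's formula for $R^{-1}$ and its relation $y^{-1}R-yR^{-1}=(y^{-1}-y)\mathrm{Id}$.

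\textbf{The twist step.} You must prove that conjugation by $P_f$ preserves the Yang--Baxter equation. ``It keeps $pr$ fixed'' is not a reason: for a general operator, conjugation by an arbitrary diagonal matrix on $V\otimes V$ does not preserve the equation. It works here because $R$ maps $u\otimes v$ into $\mathrm{span}\{u\otimes v,v\otimes u\}$. Define the diagonal operator $\Phi(u\otimes v\otimes w)=f(u,v)f(u,w)f(v,w)\,u\otimes v\otimes w$ on $V^{\otimes 3}$. It satisfies $\Phi(R\otimes\mathrm{Id})\Phi^{-1}=(P_fRP_f^{-1})\otimes\mathrm{Id}$ and $\Phi(\mathrm{Id}\otimes R)\Phi^{-1}=\mathrm{Id}\otimes(P_fRP_f^{-1})$. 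Conjugating both sides of the equation by $\Phi$ finishes the step.

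\textbf{Smaller corrections.} The block on three distinct letters is not the main obstacle you expected. For your family with parameters $x,p,r,s$ it holds identically. The only constraint comes from the blocks with a repeated letter, namely $s(x^2-sx-pr)=0$. So ``exactly when $s=x-pr/x$'' should read ``if'', since $s=0$ also works. This also means a direct verification of the theorem is short: it reduces to $1-(1-y^2)-y^2=0$.
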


Directly, we see that the inverse of the operator is given by the coefficients:
\begin{equation*}
(R^{-1})_{c \ d}^{a \ b}=
\begin{cases}
1       & \text{if $d=a \leq b=c$;} \\
y^{-2}        & \text{if $d=a > b=c$;} \\
1-y^{-2}  & \text{if $c=a > b=d$;}\\
0         & \text{otherwise.}
\end{cases}  
\end{equation*}
Furthermore $R$ satisfies a quadratic relation:
\begin{equation*}
y^{-1}R-yR^{-1}=(y^{-1}-y)\mathrm{Id}_{V\otimes V}.   
\end{equation*}
The goal now is to compute homology of $R_{(m)}$. Recall that the chain modules $C_{n}=kX^n$ and the boundary homomorphisms $\partial_{n}:C_n\to C_{n-1} $ follows precubic module construction, that is,
\begin{equation*}
\partial_n=\sum_{i=1}^n(-1)^i(d_i^{\ell}-d_i^r) 
\end{equation*}
and $d_i^{\ell}$ and $d_i^r$, have the graphical interpretation in Figure \ref{Y-B-cubic}. In the next subsection we survey actual state of knowledge, mostly the work with my student Xiao Wang \cite{PrWa2}.

\subsection{Homology of HOMFLYPT Yang-Baxter operators} 
The first nontrivial general result on homology of Yang-Baxter operator which 
is not set-theoretic Yang-Baxter operator is as follows. 
\begin{theorem}[\cite{PrWa2}]\label{PW2}
Let $R_{(m)}$ be a unital Yang-Baxter operator giving the Homflypt polynomial on level $m$ as in Theorem \ref{PW}. Then
\begin{equation*}
H_{2}(R_{(m)})={k}^{1+ \binom{m}{2}} \oplus \left( \dfrac{{k}}{1-y^{2}} \right)^{\binom{m}{2}} \oplus  \left( \dfrac{{k}}{1-y^{4}} \right)^{m-1}.
\end{equation*}
In particular, the ring ${k}$ can be either $\mathbb{Z}[y^{\pm 1}]$ or $\mathbb{Z}[y]$.
\end{theorem}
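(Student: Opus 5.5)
We compute $H_2 = \ker\partial_2/\mathrm{im}\,\partial_3$ directly, with $C_n = kX_m^n$ and $\partial_n=\sum_{i=1}^n(-1)^i(d_i^{\ell}-d_i^{r})$ read off from Figure~\ref{Y-B-cubic}.

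\emph{Low-degree boundaries.} With the column unital (wall-absorbing) convention, $d_1^{\ell}-d_1^{r}$ on $C_1$ vanishes, so $\partial_1=0$ and every $2$-chain is a candidate cycle. On a basis element $(a,b)$, $\partial_2$ involves one application of $R$. From the coefficients in Theorem~\ref{PW}:
\begin{itemize}
\item for $a\ge b$, $R(v_a\otimes v_b)=v_b\otimes v_a$;
\item for $a<b$, $R(v_a\otimes v_b)=y^2\,v_b\otimes v_a+(1-y^2)\,v_a\otimes v_b$.
\end{itemize}
We will write $\partial_2(a,b)$ explicitly as a combination of $(a)$, $(b)$ and the absorbed terms. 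Column unitality makes the coefficient sums collapse, so $\partial_2$ should be essentially a difference operator $(a,b)\mapsto c_{ab}\big((a)-(b)\big)$, with $c_{ab}$ equal to $0$, $1$ or $y^2$ according to the order type of $(a,b)$. This gives $\ker\partial_2$ explicitly. All diagonal elements $(a,a)$ are cycles, and for each pair $a<b$ a suitable combination of $(a,b)$ and $(b,a)$ is a cycle. Before going further we check this against the rank count $m^2-(m-1)$.

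\emph{Image of $\partial_3$.} We split the basis triples $(a,b,c)$ by order type:
\begin{itemize}
\item all equal;
\item two equal, in each position (e.g.\ $(a,a,b)$, $(a,b,a)$, $(b,a,a)$ with $a<b$ or $a>b$);
\item all distinct, in the six orderings.
\end{itemize}
For each type we compute $\partial_3$. This uses $d_1,d_2,d_3$ on both sides, and each requires up to two or three applications of $R$. Every resulting boundary is supported only on pairs from the letters $\{a,b,c\}$, so the image decomposes into contributions indexed by subsets. The plan:
\begin{itemize}
\item Single-letter triples $(a,a,a)$ give boundaries that are multiples of $(a,a)$. We expect these to vanish, which yields a free summand.
\item Two-letter triples on $\{a,b\}$ give a small matrix, roughly $6\times4$, acting on the span of $(a,a),(b,b),(a,b),(b,a)$ restricted to cycles.
\item Three-letter triples should contribute relations that identify or kill pieces across different pairs, most likely gluing the diagonal classes $(a,a)$ together along chains $a<b$.
\end{itemize}

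\emph{Smith normal form.} For each pair $a<b$ we compute the Smith normal form of the two-letter block over $k=\mathbb{Z}[y^{\pm1}]$. Since $k$ is not a PID, we cannot rely on the general theory. Instead we exhibit explicit unimodular row and column operations, using that $y$ is a unit and that $1-y^2$ and $1+y^2$ are coprime enough in the sense of the required factorization. The expected outcome is that each pair contributes one free generator and one $k/(1-y^2)$. Together with $1+\binom m2$ in total, this explains the $k^{1+\binom m2}\oplus(k/(1-y^2))^{\binom m2}$ part. The diagonal classes $(a,a)$ should then be related by boundaries of the form $(1-y^4)\big((a,a)-(b,b)\big)$, coming from combining two-letter triples $(a,a,b)$ and $(a,b,b)$, whose $R$-coefficients multiply as $(1-y^2)(1+y^2)$. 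Three-letter relations should make consecutive differences generate this part, leaving one free diagonal class and $(k/(1-y^4))^{m-1}$.

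\emph{Main obstacle.} The hardest step is the bookkeeping of $\partial_3$ on two- and three-letter triples: the many $R$-applications in $d_2^{\ell}$, $d_3^{\ell}$ and $d_1^{r}$, together with tracking which terms involve distinct-letter pairs. The second difficulty is proving that the three-letter relations introduce no extra torsion. This requires showing that they lie in the span of the two-letter relations, up to the $(a,a)-(b,b)$ gluing.

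I would first do $m=2$ completely by hand, where only $\{1,2\}$ occurs, and verify that the result is $k^2\oplus k/(1-y^2)\oplus k/(1-y^4)$. I would then use $m=3$ to understand the three-letter contributions, and finally organize the general case by induction on $m$. Adding the letter $v_{m+1}$, which is maximal in the order, should add exactly $m$ new pairs and one new diagonal relation.
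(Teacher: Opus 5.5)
You correctly aim to compute $\mathrm{im}\,\partial_3$ by sorting basis triples by their letters and then changing basis, and your forecasts for $\partial_3$ turn out to be right. (The survey itself only quotes this result, so I compare with the direct computation.) But your first step is wrong, and as written it gives the wrong free rank.

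\textbf{$\partial_2$ vanishes identically; it is not a difference operator.}
\begin{itemize}
\item $d_1^{\ell}(a,b)=(b)$ and $d_2^{r}(a,b)=(a)$.
\item $d_1^{r}(a,b)$ and $d_2^{\ell}(a,b)$ keep the two complementary outputs of the single crossing $R(a,b)$.
\item $R_{(m)}(a,b)$ is a combination of $(a,b)$ and $(b,a)$ whose coefficients sum to $1$. Hence $d_1^{r}(a,b)+d_2^{\ell}(a,b)=(a)+(b)$.
\item Therefore $\partial_2=\pm(d_1^{r}+d_2^{\ell}-d_1^{\ell}-d_2^{r})=0$.
\end{itemize}
So every $2$-chain is a cycle, $\ker\partial_2$ has rank $m^2$ rather than $m^2-(m-1)$, and $H_2=C_2/\mathrm{im}\,\partial_3$. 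With your kernel you would lose $m-1$ free generators. Your own final tally (two generators for each pair $a<b$ plus all $m$ diagonal classes) already uses all $m^2$ basis elements, so it contradicts your kernel claim.

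Two smaller points. First, $1-y^2$ and $1+y^2$ are not coprime in $\mathbb{Z}[y^{\pm1}]$ or $\mathbb{Z}[y]$: the ideal they generate contains $2$ but not $1$. Second, $y$ is not a unit in $\mathbb{Z}[y]$, which the statement also covers. Fortunately neither fact is needed.

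\textbf{With $\partial_2=0$ the rest of your plan goes through, and no Smith normal form theory or induction on $m$ is required.} Write $t=y^2$, $s=1-t$, and for $a<b$ put $e_{ab}=(a,b)+t(b,a)-(1+t)(a,a)$. A direct computation with the face maps gives the following, up to the overall sign convention.
\begin{itemize}
\item $\partial_3(a,a,a)=0$.
\item For $a<b$, $\partial_3$ kills $(a,b,a)$, $(b,a,a)$, $(b,a,b)$, $(b,b,a)$, while $\partial_3(a,a,b)=s\,e_{ab}$ and $\partial_3(a,b,b)=-s\,e_{ab}+s(1+t)\big((b,b)-(a,a)\big)$.
\item For $a<b<c$, $\partial_3$ kills $(b,c,a)$, $(c,a,b)$, $(c,b,a)$, and
\begin{align*}
\partial_3(a,b,c)&=s(e_{bc}-e_{ab})+s(1+t)\big((b,b)-(a,a)\big),\\
\partial_3(a,c,b)&=s(e_{bc}-e_{ac})+s(1+t)\big((b,b)-(a,a)\big),\\
\partial_3(b,a,c)&=s(e_{ac}-e_{ab}).
\end{align*}
\end{itemize}
So the three-letter relations are redundant. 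They do not even perform the diagonal gluing: the two-letter triples $(a,a,b)$ and $(a,b,b)$ already give $(1-y^4)\big((b,b)-(a,a)\big)$ for every pair, exactly as you predicted.

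Now use the basis $\{e_{ab},(b,a)\}_{a<b}\cup\{(1,1)\}\cup\{(a+1,a+1)-(a,a)\}_{1\le a<m}$ of $C_2$, obtained from the standard basis by an invertible triangular change. In this basis $\mathrm{im}\,\partial_3$ is spanned by $(1-y^2)e_{ab}$ and $(1-y^4)\big((a+1,a+1)-(a,a)\big)$, so it is already diagonal. This yields $k^{1+\binom{m}{2}}\oplus(k/(1-y^2))^{\binom{m}{2}}\oplus(k/(1-y^4))^{m-1}$ over any commutative ring containing $y$, in particular over both $\mathbb{Z}[y^{\pm1}]$ and $\mathbb{Z}[y]$.
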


%\begin{theorem}\cite{PrWa2}

\begin{conjecture}[\cite{PrWa2}]
\begin{equation*}
H_3(R_{(m)})=k^{m(8-3m+m^2)/6} \oplus (k/(1-y^2))^{(m^2-1)(5m-6)} \oplus (k/(1-y^4))^{m-1}.
\end{equation*}
Notice that for $m=3$ we have 
$\frac{m(8-3m+m^2)}{6}=4=(m-1)^2$.
\end{conjecture}

{\color{red} Added for e-print:\\ We solved the conjecture at our Mathathon 9 in December 2024. \footnote{We computed also $H_4(R_{(m)})$ to get \\ $k^{\frac{m^4 - 6m^3 + 23m^2 - 18m + 24}{24}}\oplus (k/(1-y^2))^(\frac{(m-1)(23m^{3}-3m^{2}-26m+24)}{24} \oplus (k/(1-y^4))^{\frac{(m-1)(m^2+m+2)}{2}}$. }}

\begin{conjecture}\cite{PrWa3}
The rank of the free part of $H_n(R_{(m)})$ 
is equal to $2^{m-1}$ for $n\geq m-1$, and
\begin{equation*}
\mathrm{rank} (H_n(R_{(m)})) = \binom{m-1}{0} + \binom{m-1}{1} +\ldots + \binom{m-1}{n},  
\end{equation*}
for $0<n\leq m-1$.
\end{conjecture}

{\color{red} Added for e-print:\\ We proved that the right side of the equality is in fact the lower bound for $\mathrm{rank} (H_n(R_{(m)}))$.} 

\begin{conjecture}[\cite{PrWa1}]\label{HnR2}
\begin{equation*}
H_n(R_{(2)})= k^2\oplus (k/(1-y^2))^{a_n}\oplus  (k/(1-y^2))^{s_{n-2}},   
\end{equation*}
where 
\begin{equation*}
s_{n-2}=\sum_{i=1}^{n-1}f_i = f_{n+1}-1,\,\,\text{and}\,\, 2^n=2+a_{n-1}+s_{n-3}+a_n+s_{n-2}. 
\end{equation*}
\end{conjecture}
Notice that 
\begin{equation*}
2 + s_{n-3}+s_{n-2} = f_n+f_{n+1} = f_{n+2}\,\,\text{with}\,\, a_1=0.
\end{equation*}
Thus, $a_n= 2^n - a_{n-1}-f_{n+2}$.

We can write a short closed formula for $a_n$, and we notice that it is analogous to calculation in Subsection \ref{nmovemdegree} for $m=1$. 
Thus we have:
\begin{eqnarray*}
a_n &\stackrel{(1)}{=}& 2^n - a_{n-1}-f_{n+2} \stackrel{(1)}{=}2^n-2^{n-1}+a_{n-2}-f_{n+2}+f_{n+1}  \stackrel{(3)}{=} \ldots \\
&\stackrel{(n)}{=}& (2^n-2^{n-1} + \ldots + (-1)^n) - (f_{n+2}-f_{n+1} + \ldots +(-1)^nf_2) \\
&=& \frac{2^{n+1}+(-1)^n}{3} - f_{n+1}.
\end{eqnarray*}
In the proof we use the standard Fibonacci numbers formula:
\begin{equation*}
\sum_{i=2}^m(-1)^{m-i}f_i= f_{m-1}.    
\end{equation*}
\begin{example}
Consider the Yang-Baxter operator $R_{(m)}$ for arbitrary $m\geq 2$. 
We will graphically compute part of $\partial(a,b,c,d)$ in the case of 
$a\leq b\leq c\leq d$. Generally we have $\partial_4(a,b,c,d)=\sum_{i=1}^4(-1)^i (d_i^{\ell}-d_i^r) .$
We illustrate calculation of $d_4^{\ell}(a,b,c,d)$ see Figure \ref{D4-compTree}. 
From the figure we get 
\begin{eqnarray*}
d_4^{\ell}(a,b,c,d)&=& y^6(a,b,c) +y^4(1-y^2)(d,b,c)+ y^4(1-y^2)(a,d,c) + y^2(1-y^2)^2(b,d,c)\\
&+& y^4(1-y^2)(a,b,d)+ y^2(1-y^2)^2(c,b,d) + y^2(1-y^2)^2(a,c,d)\\
&+&(1-y^2)(b,c,d).
\end{eqnarray*}
\end{example}
In February of 2020, we computed with Masahico Saito and Mohamed Elhamdadi the boundary $\partial_4(C_4)$ with the goal to find the third homology of $R_{(m)}$ but Covid stopped our efforts. My plan is to devote to this problem Mathathon 9 which will take place in December 2024 at GWU. I hope to report our funding at the next B{\c e}dlewo conference, in Summer 2025.\\
{\color{red}Added for e-print: We computed, not only $H_3(R_{(m)})$ and $H_4(R_{(m)})$ but also, partially $H_5(R_{(m)})$ and $H_6(R_{(m)})$ discovering, in particular a new torsion in homology $H_6$; see \cite{CCGOPWY}. }

%\newpage
\begin{figure}[ht]
\centering
\scalebox{.35}{\includegraphics{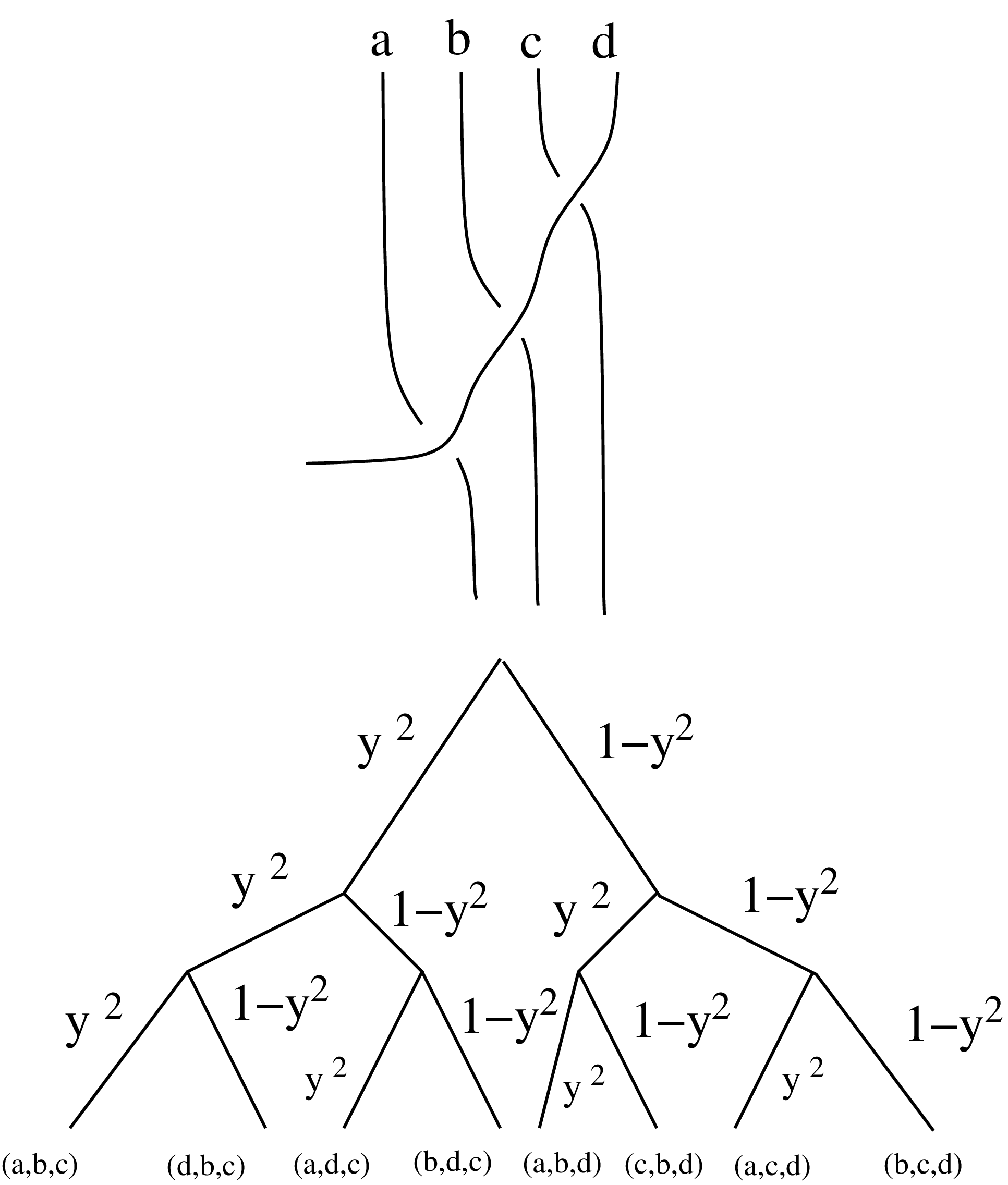}}
%\scalebox{.40}{\includegraphics{Y-Bpresim.eps}}
\caption{Computational tree for $d_4^{\ell}(a,b,c,d)$ where $a\leq b \leq c \leq d $. We sum over all leaves with coefficients being the product of weights of edges from the root to the given leaf}
\label{D4-compTree}
\end{figure}

\newpage

\end{document}